\documentclass{article}
\usepackage{graphicx} 
\usepackage[english]{babel}
\usepackage{amsthm}
\usepackage{amsmath}
\usepackage{float}
\usepackage{mathtools}
\usepackage{amssymb}
\usepackage[sort]{natbib}
\usepackage[a4paper, left=3cm, right=3cm, top=2cm, bottom=2cm]{geometry}
\usepackage{hyperref}
\usepackage{comment}
\usepackage[noabbrev,capitalise]{cleveref}
\usepackage{algorithm}
\usepackage{algpseudocode}
\usepackage{tikz}
\usepackage{physics}
\usetikzlibrary{positioning, quotes, decorations.pathreplacing, shapes.geometric, calc,through,backgrounds}
\usepackage{tcolorbox}
\usepackage{lipsum} 
\usepackage[cmtip,all]{xy}
\usepackage{caption} 
\usepackage{wrapfig}
\usepackage{subcaption}
\usepackage{array}
\usepackage{tabularx}

\usepackage[T1]{fontenc}

\usepackage{todonotes}

\DeclareMathOperator{\Aut}{Aut}
\DeclareMathOperator{\Cay}{Cay}
\DeclareMathOperator{\modd}{modd}

\title{Polyhedral Maps of Cubic Graphs with given Automorphism Groups}

\author{
Ugo Detaille\thanks{RWTH Aachen University, Aachen, Germany. E-mail: {\tt ugo.detaille@rwth-aachen.de}.} \and 
Meike Weiß\thanks{ RWTH Aachen University, Aachen, Germany. E-mail: {\tt weiss@art.rwth-aachen.de}.}
\and
Reymond Akpanya\thanks{RWTH Aachen University, Aachen, Germany. E-mail: {\tt akpanya@art.rwth-aachen.de}.}
\thanks{The University of Sydney, Sydney, Australia. E-mail: {\tt reymond.akpanya@sydney.edu.au}.}
\and
Alice C.\ Niemeyer\thanks{RWTH Aachen University, Aachen, Germany. E-mail: {\tt alice.niemeyer@art.rwth-aachen.de}.}
}
\date{ }

\newtheorem{theorem}{Theorem}[section]
\newtheorem*{theorem*}{Theorem}
\newtheorem{lemma}[theorem]{Lemma}
\newtheorem{definition}[theorem]{Definition}

\newtheorem*{corollary*}{Corollary}
\newtheorem*{conjecture*}{Conjecture}

\newtheorem{remark}[theorem]{Remark}

\newtheorem*{question*}{Question}
\begin{document}

\maketitle


\begin{abstract} 
L.\ Babai introduced a method for constructing a cubic graph whose automorphism group is isomorphic to a given finite group $G$, obtained by modifying a corresponding Cayley graph of $G$.
Building on this approach, we construct a cubic graph that admits a polyhedral map whose automorphism group, as well as the automorphism group of the polyhedral map itself, is isomorphic to $G$.
\end{abstract}

\section{Introduction}
A common practice in group theory is to deepen the understanding of a finite group by considering it as the automorphism group of a combinatorial structure, see \cite{MR1211265,sabidussi,Cori} for instance. Any finite group $G=\langle S\rangle$ can be viewed as the automorphism group of the edge-coloured Cayley graph $\Cay_{G,S},$ where the edge colouring is induced by the set of generators $S$.
Since $\Cay_{G,S}$ is an edge-coloured directed graph, it is natural to ask whether $G$ can be realised as the automorphism group of an undirected graph without edge colours. This question has first been answered by R. Frucht in 1949. In \cite{frucht,FruchtHow, MR1557026}, Frucht introduced several constructions of undirected graphs having prescribed finite groups as automorphism groups. In \cite{SurfacesWithAuto}, an oversight has been noted in Frucht’s original cubic graph construction. This has led to a simplified construction of cubic graphs with prescribed automorphism groups for groups given by a 2-element generating set and a modified construction for groups given by more than two generators. Furthermore, the aforementioned work demonstrates that the resulting cubic graphs can be equipped with cycle double covers that are invariant under the natural action of their automorphism groups on the set of cycles.
We observe that two cycles of such a constructed cycle double cover can intersect in two or more edges. Hence, these constructed cycle double covers do not describe polyhedral maps.
Background on cycle double covers is given in \cite{seymour,szekeres}, while \cite{GraphsOnSurfaces,BrehmSchulte1997,TopologicalGraphTheory} offer introductions to the theory of (polyhedral) maps.
An alternative construction of a cubic graph with prescribed automorphism group has been introduced by L.~Babai. In \cite{babaiBook}, Babai exploits a Cayley graph of a given finite group $G$ given with a generating set in order to realise $G$ as the automorphism group of a cubic graph, see \Cref{secPreliminaries}. Note that these cubic graphs are 2- but not 3-connected.

Inspired by Babai's cubic graph construction in \cite{babaiBook} and the construction of cycle double covers in \cite{SurfacesWithAuto}, we investigate the construction of cubic graphs with prescribed automorphism groups and corresponding cycle double covers. In particular, we prove the following theorem.

\begin{theorem*}
    Let $G$ be a finite group and $S$ a generating set for $G$. Then there exist an $3$-connected cubic undirected graph $D_{G,S}$ and a cycle double cover $\mathcal{Z}$ of $D_{G,S}$ satisfying the following properties: 
    \begin{enumerate}
        \item[(1)] $\Aut(D_{G,S}) \cong G,$
        \item[(2)] $\mathcal{Z}$ describes a polyhedral map, and 
        \item[(3)]$ {}^{\Aut(D_{G,S})}\mathcal{Z}:=\{\pi(z)\mid z\in \mathcal{Z}, \pi\in\Aut(D_{G,S})\}=\mathcal{Z},$ i.e., $\mathcal{Z}$ is $\Aut(D_{G,S})$-invariant.
    \end{enumerate}
\end{theorem*}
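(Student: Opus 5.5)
The plan is to build $D_{G,S}$ in the spirit of Babai's construction from \Cref{secPreliminaries}, but to start from an embedded object so that both the map and the symmetry are visible from the outset. Write $S=\{s_1,\dots,s_k\}$; we may assume $S$ contains no identity element, and repeated generators change nothing. First I would build a $G$-invariant polyhedral map $M_0$ whose underlying graph is a modification of $\Cay_{G,S}$. At every vertex $g$ the incident edges are labelled by $s_i^{\pm1}$, and we fix the cyclic order $s_1,s_1^{-1},s_2,s_2^{-1},\dots$ around every vertex. Because $G$ acts on $\Cay_{G,S}$ by left multiplication and preserves edge labels, this rotation system is $G$-invariant, and its face boundaries form a $G$-invariant cycle double cover. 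If this map is not polyhedral (loops, multi-edges from involutions, faces meeting badly), I would truncate it first: replace each vertex $g$ by a cycle $C_g$ of length $2k$, one vertex per label. The result is a cubic graph on which $G$ still acts freely, and it carries an invariant cycle double cover consisting of the cycles $C_g$ together with the lifted faces. I would repeat the truncation (for instance, take the medial construction, then truncate) until the result is $3$-connected and any two faces meet in at most one edge or vertex. For cubic maps, polyhedrality is equivalent to $3$-connectivity together with this face condition, and this is standard for truncations of maps whose faces are all of length at least $3$.

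Second, I would kill the unwanted automorphisms while keeping $G$. The Cayley structure is encoded by edge colours and orientations, so, as in Babai's construction, every edge class of colour $s_i$ gets a rigid gadget that fixes its orientation and its colour. Babai does this with pendant-like paths, which destroy $3$-connectivity. Instead I would use a local replacement that stays inside the map: subdivide the edge into a path of $m_i$ new vertices, where $m_i$ is distinct for each $i$, and join consecutive subdivision vertices by chords drawn \emph{within one of the two adjacent faces}. Choosing the face on one side, say the left side with respect to the orientation $g\to gs_i$, breaks the reversal symmetry. Equivalently, one can insert a small planar triangulated patch (a "ladder" of squares, each split by a diagonal) whose cubic dual is polyhedral and asymmetric. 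Since the replacement is performed identically on every edge of each colour and respects orientation, $G$ still acts, and the new faces together with the modified old faces form a $G$-invariant cycle double cover $\mathcal{Z}$. This gives (3) provided we also show $\Aut(D_{G,S})=G$. Inserting a $3$-connected planar patch into a face edge preserves $3$-connectivity and polyhedrality, which can be checked by a local argument on separating pairs and on the intersection of two faces.

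Third, the main obstacle is proving $\Aut(D_{G,S})\cong G$ for the abstract graph, not just for the map. My approach is to find a graph-theoretic invariant that detects the gadgets: for example, the triangles, or the short cycles that only the gadgets contain, chosen with lengths different from all the original faces and from the cycles $C_g$. Any automorphism must then map gadgets to gadgets of the same type $m_i$, so it preserves colours; the asymmetry of each gadget forces it to preserve orientation. Contracting the gadgets and the cycles $C_g$ recovers the coloured directed Cayley graph, and its colour-preserving automorphism group is exactly $G$ acting by left multiplication, since such an automorphism is determined by the image of one vertex. The delicate points are to choose the gadget sizes so that no accidental short cycles arise in the rest of the graph, and to handle involutions $s_i$, where the gadget should be symmetric but the colour is still recovered. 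Once $\Aut(D_{G,S})=G$ is established, invariance of $\mathcal{Z}$ is immediate, because $\mathcal{Z}$ was built $G$-equivariantly.
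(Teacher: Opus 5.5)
You have the right architecture. In fact your Cayley map is exactly the one underlying the paper's $\mathcal{Z}$. The rotation $s_1,s_1^{-1},\dots,s_k,s_k^{-1}$ is the order in which chains attach to $A_g$: the outgoing $d$-chain at $a_g^{d.1},a_g^{d.2}$, the incoming one at $a_g^{d.3},a_g^{d.4}$, then the next block. Its faces $g,gs_1,gs_1s_2,\dots$ are the roof edge cycles along $g\langle s_1\cdots s_k\rangle$, and its faces running along a single colour are the base edge cycles along $g\langle s_d\rangle$.

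\textbf{First gap: polyhedrality.} The justification you give is false: truncation, even repeated, does not repair this map. Take $G=\mathbb{Z}_n$, $S=\{1,2\}$ with $3\nmid n$. The face $0,1,3,4,6,\dots$ passes through every vertex twice, once before a $1$-step and once before a $2$-step. It also shares every edge of colour $1$ with the single colour-$1$ face. After truncation it meets $C_g$ in two edges (the corners between $s_2^{-1},s_1$ and between $s_1^{-1},s_2$), and it still meets that other face in $n$ edges. Further truncations only shorten these shared edges.

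The paper removes exactly these two defects by design. The ladder splits every Cayley edge into an upper and a lower path, so roof and base cycles are disjoint. The center vertices cut your would-be face $C_g$ into $k+1$ inner faces, so each corner edge $\{a_g^{d.4},a_g^{(d+1 \modd k).1}\}$ lies on its own inner face. Hence a roof cycle may revisit $A_g$ while meeting every inner face in at most one edge. Truncating once and then truncating the medial would also work: the truncated medial of a map is polyhedral when the map has no loops, distinct faces on the two sides of each edge, and no face with two corners at one vertex, and your map satisfies this after one truncation. But that is an argument you would have to supply.

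\textbf{Second, more serious gap: item (1).} Item (1), on which your derivation of (3) rests, is only a program. The gadget is not actually defined. Chords between consecutive subdivision vertices are parallel edges, the subdivision vertices still need a third neighbour, and no attachment rule is given for the ``patch''. So neither preservation of polyhedrality nor rigidity can be checked.

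For rigidity you would have to do three things. First, exhibit an invariant carried by gadget vertices and by no host vertex; after an omnitruncation the host is full of $4$- and $6$-cycles, and you must also rule out new short cycles through gadget and host. Second, show that this invariant fixes each gadget pointwise, including its colour and orientation. Third, show that an automorphism inducing the identity on $\Cay_{G,S}$ fixes every vertex region pointwise.

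This is where the paper does its real work, using Frucht's vertex types:
\begin{enumerate}
  \item triangles occur only on $\{0,1,3\}$ of an endgadget;
  \item vertex $1$ is singled out by its type $(3,6,7)$, which separates $6$ from $9$ and fixes the endgadget pointwise;
  \item the ladder length recovers $d$;
  \item connector and ladder vertices have the unique types $(4,8,10)$ and $(4,4,6)$;
  \item the induced map on blow-up graphs is propagated along words in the generators to a left multiplication.
\end{enumerate}
Without a concrete verification of this kind, your Steps 2 and 3 do not establish $\Aut(D_{G,S})\cong G$, and hence do not yield (3) either.
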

This result will be established in \Cref{theoremAutGroupD,theoremZ1CDC,lemmaInvarianceCDC}.
By proving the above theorem, we extend the results of \cite{SurfacesWithAuto} by modifying the construction in \cite{babaiBook}. We further establish a relationship between the cycles of our constructed cycle double covers and the left cosets of the prescribed groups, see \Cref{lemmaCyclesPowers}.
Babai's construction seems fruitful for graph constructions with various given constraints. Currently, we are exploring whether Babai's cubic graph construction can be exploited to construct $k$-regular graphs with polyhedral embedding.

Our paper is structured as follows: In \cref{secPreliminaries}, we introduce basic notations and fundamentals relevant to our work. In addition, we briefly recall Babai’s earlier construction of cubic graphs from \cite{babaiBook}. In \cref{secGraphD}, we present our cubic graph construction and formally prove that the automorphism group of a constructed cubic graph is in fact isomorphic to a prescribed group. Next, we construct cycle double covers for our cubic graphs that induce polyhedral maps, see \cref{sec1CutCDC}. Finally, in \cref{secProperties}, we provide further insights on the connection between the group and our graph construction, as well as certain combinatorial properties of our polyhedral maps and their underlying cubic graphs. This paper is based on the bachelor thesis of the first author \cite{gitBA}. 
Additionally, \cite{gitBA} contains a GAP-implementation~\cite{Digraphs,GAP4} of a previous graph construction based on the aforementioned bachelor thesis. Furthermore, the same repository contains a Magma-implementation \cite{Bosma} of the graph construction. We have tested the implemented code with randomly selected groups of reasonable orders to ensure that our code and our results are correct and reliable.

\section{Preliminaries}\label{secPreliminaries}
For $k \in \mathbb{N}$, we write $[k]$ to denote the set $\{1, \ldots, k \} \subseteq \mathbb{N}$.
We define the modified $\bmod$-operator called $\modd$ as follows:
\[ \modd : \mathbb{N}^2 \longrightarrow \mathbb{N}, \; (n,m) \longmapsto 
     \begin{cases*}
        n, & \textnormal{if $m \vert n$}, \\
        n \mod m, & \textnormal{else.}
     \end{cases*}
    \]
Essentially, the operator $\modd$ simply adjusts the indices of the elements such that this fits to our enumerations.

Given a graph $\Gamma =(V,E)$, we write $\{ v,w\} \in E$ for an undirected edge between two vertices $v,w\in V$, and $(v,w)\in E$ when $\Gamma$ is directed.
We assume that all graphs in this paper are connected, simple and finite. 
Let $\Gamma = (V,E)$ be such a graph which is additionally undirected.
A connected 2-regular subgraph of $\Gamma$ is called a \textbf{cycle}. A \textbf{cycle double cover (CDC)} of $\Gamma$ is a set of cycles in $\Gamma$ such that every edge in $E$ is contained in exactly two of these cycles. A CDC of a cubic graph induces a $\textbf{polyhedral map}$ if any two cycles of the CDC intersect in at most one edge.
Note that a cubic graph with a CDC inducing a polyhedral map is 3-connected, see \cite{GraphsOnSurfaces} for more details.
An \textbf{automorphism} of a (directed) graph $\Gamma$ is a bijection $\pi: V(\Gamma) \to V(\Gamma)$ such that every pair of vertices $v_1,v_2$ is connected by a (directed) edge in $\Gamma$ if and only if the same holds for $\pi(v_1),\pi(v_2)$. Further, if $e=\{v_1,v_2\}$ is an edge and $\pi\in \Aut(\Gamma)$ an automorphism, we define $\pi(e):=\{\pi(v_1),\pi(v_2)\}$. We define the image of an edge similarly if $\Gamma$ is a directed graph.
If $\Gamma$ has an edge colouring $\mathcal{C}:E(\Gamma) \rightarrow \left\{ c_1, \ldots, c_k\right\}$, an automorphism $\pi$ of $\Gamma$ must also satisfy $\mathcal{C}(e)=\mathcal{C}(\pi(e))$ for every $e\in E(\Gamma)$. We denote the group of automorphisms of $\Gamma$ by $\Aut(\Gamma)$. Next, we introduce the Cayley graph of a group with a given generating set, as this notion is central to our investigation.
\begin{definition}\label{defCayleyGraph}
    Let $G$ be a finite group and $S = \{s_1, \ldots, s_k\}$ a generating set for $G$. The \textbf{\emph{Cayley graph}} $\Cay_{G,S}=(V,E)$ of $G$ with respect to $S$ is the directed and edge-coloured graph defined by
    \begin{enumerate}
        \item $V := \left\{g \mid g \in G \right\},$
        \item $E := \left\{ (g_1, g_2) \in G^2 \mid \textnormal{there exists } s \in S \textnormal{ such that } g_1 \cdot s = g_2 \right\}$ \text{ and}
        \item $e = (g_1, g_2) \in E$ \textnormal{has colour} $d \in [k] , \textnormal{ if } g_1 \cdot s_d = g_2$.
    \end{enumerate}
\end{definition}
For a finite group $G$ generated by a set $S= \{s_1, \ldots, s_k\}$, the Cayley graph $\Cay_{G,S}$ is strongly connected. Moreover, it is known that the automorphism group of the Cayley graph, respecting edge orientation and edge colouring, is given by the left-regular action of $G$ on itself, i.e.\ 
 \[  \Aut(\Cay_{G,S}) = \left\{ \pi_g \in \textnormal{Sym}(G) \mid \pi_g(h) := gh \textnormal{ for all } h \in G \right\} =: \mathcal{L}(G). \]
We conclude this section by recalling the cubic graph construction from \cite{babaiBook}. Starting with a finite group $G$ and a generating set $S$ for $G$, Babai's construction computes the Cayley graph $\Cay_{G,S}$ as a first step. As stated above, the automorphism group of this edge-coloured directed graph is isomorphic to $G$. Babai's cubic graph construction is based on the idea of modifying the given Cayley graph such that the resulting graph is undirected and cubic. Hence, the vertices and edges of the Cayley graph have to be modified in such a way that both the orientations and the colours of the directed edges are preserved. To achieve this, Babai replaces the vertices and edges of the Cayley graph $\Cay_{G,S}$ by introducing certain subgraphs. In this work, we refer to such subgraphs as \textbf{gadgets}. In particular, Babai replaces a directed edge $(g,h) \in E(\Cay_{G,S})$ of colour $d \in [k]$ as presented in \cref{fig:dPathGToH}. The colour $d$ is preserved by using $d$ many small gadgets glued to one another, and the orientation is always encoded by connecting the endpoint of the edge --- in our case $h$ --- to a large gadget.
Moreover, Babai replaces each vertex of the Cayley graph $\Cay_{G,S}$ with a cycle of length $2\vert S \vert $, connecting each of the resulting $2|S|$ vertices to exactly one of the newly introduced gadgets that replace the original Cayley edges, see \cref{fig:Dummy1}.

\begin{figure}[ht]
    \centering
    \resizebox{12cm}{!}{
    \begin{tikzpicture}[
        node distance = 5mm and 5mm,
             V/.style = {circle, draw, fill=gray!30},
        every edge quotes/.style = {auto, font=\footnotesize, sloped}
                            ]
    
    \tikzstyle{overbrace text style}=[font=\large, above, pos=.5, yshift=4mm]
    \tikzstyle{underbrace text style}=[font=\large, below, pos=.5, yshift=-4mm]
    \tikzstyle{overbrace style}=[decorate,decoration={brace,raise=3mm,amplitude=3pt}]
    \tikzstyle{underbrace style}=[decorate,decoration={brace,raise=3mm,amplitude=3pt,mirror}]
    
    \begin{scope}[nodes=V]
    \node[] (g) at (-1,-5)   {$g$};
       
    \node[fill=red!30] (1) at (0,0)   {};
    \node[fill=red!30] (2) at (3,0)   {};
    \node[fill=red!30] (3) at (1.5, -1)          {};
    \node[fill=red!30] (4) at (0, -3)    {$x$};
    \node[fill=red!30] (5) at (1.5, -3)    {};
    \node[fill=red!30] (6) at (3, -3)          {};
    
    \node[fill=red!30] (7) at (7,0)   {};
    \node[fill=red!30] (8) at (10,0)   {};
    \node[fill=red!30] (9) at (8.5, -1)          {};
    \node[fill=red!30] (10) at (7, -3)    {};
    \node[fill=red!30] (11) at (8.5, -3)    {};
    \node[fill=red!30] (12) at (10, -3)          {};
    
    \node[fill=red!30] (13) at (12,2)   {};
    \node[fill=red!30] (14) at (15,2)   {};
    \node[fill=red!30] (15) at (13.5, 1)          {};
    \node[fill=red!30] (16) at (12, -0.5)    {};
    \node[fill=red!30] (17) at (13.5, -0.5)    {};
    \node[fill=red!30] (18) at (12, -3)          {};
    \node[fill=red!30] (19) at (13.5, -3)          {};
    \node[fill=red!30] (20) at (15, -3)          {$y$};
    
    \node[] (h) at (16, -5) {$h$};
            \end{scope}
        \draw   
                (g) edge[] (4)
                (1)  edge[ultra thick, red] (2)
                (1)  edge[ultra thick, red] (3)
                (2)  edge[ultra thick, red] (3)
                (1) edge[ultra thick, red] (4)
                (3) edge[ultra thick, red] (5)
                (2) edge[ultra thick, red] (6)
                (4) edge[ultra thick, red] (5)
                (5) edge[ultra thick, red] (6)
    
                (6) edge [ultra thick, red] (4,-3)
    
                (4.7, -3) edge[loosely dotted, ultra thick, red] (5.2, -3)
    
                (6,-3) edge[ultra thick, red] (10)
    
                (7)  edge[ultra thick, red] (8)
                (7)  edge[ultra thick, red] (9)
                (8)  edge[ultra thick, red] (9)
                (7) edge[ultra thick, red] (10)
                (9) edge[ultra thick, red] (11)
                (8) edge[ultra thick, red] (12)
                (10) edge[ultra thick, red] (11)
                (11) edge[ultra thick, red] (12)
    
                (12) edge[ultra thick, red] (18)
    
            (13)  edge[ultra thick, red] (14)
            (13)  edge[ultra thick, red] (15)
            (14)  edge[ultra thick, red] (15)
            (13) edge[ultra thick, red] (16)
            (15) edge[ultra thick, red] (17)
            (16) edge[ultra thick, red] (17)
            (16) edge[ultra thick, red] (18)
            (17) edge[ultra thick, red] (19)
            (14) edge[ultra thick, red] (20)
            (18) edge[ultra thick, red] (19)
            (19) edge[ultra thick, red] (20)
    
            (20) edge[] (h)
    
            (g) edge[loosely dashed, thick, ->] node[underbrace text style] {the original edge $(g,h)$ of colour $d$ in $\textnormal{Cay}_{G,S}$} (h);
    
    \draw [overbrace style] (1.north west) -- (8.north east) node [overbrace text style] {$d$ many small gadgets};
    
    \draw [overbrace style] (13.north west) -- (14.north east) node [overbrace text style] {one large gadget};
    
\end{tikzpicture}
    }
    \caption{ \centering The subgraph replacing the original Cayley graph edge $(g,h)$ of colour $d \in [k]$.}
    \label{fig:dPathGToH}
\end{figure} 

\begin{figure}[ht]
    \begin{center}
    \resizebox{12cm}{!}{\begin{tikzpicture}[
    node distance = 2mm and 2mm,
         V/.style = {circle, draw, fill=gray!30},
    every edge quotes/.style = {auto, font=\footnotesize, sloped}
                        ]

\tikzstyle{overbrace text style}=[font=\large, above, pos=.5, yshift=4mm]
\tikzstyle{underbrace text style}=[font=\large, below, pos=.5]
\tikzstyle{overbrace style}=[decorate,decoration={brace,raise=3mm,amplitude=3pt}]

\begin{scope}[nodes=V]
\node[fill=red!30] (1) at (0,0) {$a_g^1$};
\node[fill=red!30] (2) at (0,-1.5) {$a_g^2$};
\node[fill=red!30] (3) at (0,-4.5) {$a_g^k$};
\node[fill=red!30] (4) at (-2, -4.5) {$a_g^{k+1}$};
\node[fill=red!30] (5) at (-2, -3) {$a_g^{k+2}$};
\node[fill=red!30] (6) at (-2, 0) {$a_g^{2k}$};

\node (7) at (2, 1) {$x_1$};
\node (8) at (2, -1.5) {$x_2$};
\node (9) at (2, -5.5) {$x_k$};
\node (10) at (-4, -5.5) {$y_1$};
\node (11) at (-4, -3) {$y_2$};
\node (12) at (-4, 1) {$y_k$};

\end{scope}

\draw   
      (1) edge[red, ultra thick] (6)
      (1) edge[red, ultra thick] (2)
      (3) edge[red, ultra thick] (4)
      (4) edge[red, ultra thick] (5)
      
      (2) edge[red, ultra thick] (0, -2.3)
      (0, -2.7) edge[loosely dotted, ultra thick, red] (0, -3.2)
      (3) edge[red, ultra thick] (0, -3.7)

      (6) edge[red, ultra thick] (-2, -0.9)
      (-2, -1.2) edge[loosely dotted, ultra thick, red] (-2, -1.7)
      (5) edge[red, ultra thick] (-2, -2.1)

      (1) edge (7)
      (2) edge (8)
      (3) edge (9)
      (4) edge (10)
      (5) edge (11)
      (6) edge (12)

      (7) edge [dashed] node[black, sloped, above] {the blow-up of $(g, h_1)$} (6, 1)
      (8) edge [dashed] node[black, sloped, above] {the blow-up of $(g, h_2)$} (6, -1.5)
      (9) edge [dashed] node[black, sloped, above] {the blow-up of $(g, h_k)$} (6, -5.5)
      (10) edge [dashed] node[black, sloped, above] {the blow-up of $(h_1^{\prime}, g)$} (-8, -5.5)
      (11) edge [dashed] node[black, sloped, above] {the blow-up of $(h_2^{\prime}, g)$} (-8, -3)
      (12) edge [dashed] node[black, sloped, above] {the blow-up of $(h_k^{\prime}, g)$} (-8, 1);
        
\end{tikzpicture}}
    \end{center}
    \caption{\centering The blow-up of a vertex $g \in V(\Cay_{G,S})$ into a cycle of length $2k$ and its immediate neighbourhood.}
    \label{fig:Dummy1}
\end{figure}
\Cref{fig:dPathGToH} illustrates the fact that the cubic graphs resulting from Babai's construction are 2- but not 3-connected. In particular, for any finite group and any choice of generating set, removing the vertices $x$ and $y$ shown in \cref{fig:dPathGToH} results in a disconnected graph.

\section{Our Graph Construction \texorpdfstring{$D_{G,S}$}{D}}\label{secGraphD}
In this section, we present our cubic graph construction for a finite group $G$ and a generating set $S=\{s_1,\ldots,s_k\}$ for $G$. Similarly to Babai's approach, we construct subgraphs that serve to replace both the edges and the vertices of the Cayley graph $\Cay_{G,S}$.
In \cref{subSecGraphD}, we detail the construction of the cubic graph, and in \cref{subSecAutD}, we establish that its automorphism group is isomorphic to the input group $G$.

\subsection{Constructing the Graph $D_{G,S}$}\label{subSecGraphD}
First, we introduce a key element of our construction, a graph with ten vertices and thirteen edges, shown in \cref{fig:NewGadgets}, which we refer to as the \textbf{endgadget}.
\begin{figure}[ht]
    \centering
    \resizebox{3cm}{!}{
            \begin{tikzpicture}[
node distance = 5mm and 5mm,
     V/.style = {circle, draw, fill=gray!30},
every edge quotes/.style = {auto, font=\footnotesize, sloped}
                    ]
    \begin{scope}[nodes=V]
\node (1) at (0,0)   {0};
\node (2) at (3,0)   {1};
\node (3) at (0, -2)          {2};
\node (4) at (1.5, -2) {3};
\node (5) at (0, -4) {4};
\node (6) at (1.5, -4) {5};
\node (7) at (3, -4) {6};
\node (8) at (0, -6) {7};
\node (9) at (1.5, -6) {8};
\node (10) at (3, -6) {9};

    \end{scope}
\draw   (1)  edge (2)
        (1)  edge (3)
        (1)  edge (4)
        (2) edge (4)
        (3) edge (5)
        (4) edge (6)
        (2) edge (7)
        (3) edge (6)
        (5) edge (8)
        (6) edge (9)
        (7) edge (10)
        (8) edge (9)
        (9) edge (10);
        
    \end{tikzpicture}
        }
\caption{The endgadget.}
\label{fig:NewGadgets}
\end{figure}

Next, we define the undirected subgraph that replaces a directed edge of the Cayley graph, while preserving both the colour and orientation of the edge in $\Cay_{G,S}$.
For an edge $(g,h) \in E(\Cay_{G,S})$ of colour $d \in [k]$ with respect $G=\langle S\rangle$, we define the \textbf{$d$-chain} from $g$ to $h$ as the graph $\mathfrak{C}_{[d,g,h]}$ consisting of a ladder of length $d$ combined with an endgadget, as depicted in \cref{fig:dPathD1}. Thus, the colour $d$ is encoded by the length of the ladder. Note, in the following we will introduce further gadgets that replace the nodes $g,h$ in the Cayley graph. For the moment, it suffices to reference these subgraphs as $g$ and $h.$ With this the orientation of $(g,h)$ is maintained by connecting $g$ to the ladder itself, while the end vertex $h$ is connected to the only endgadget of the chain.
We replace the edge $(g,h)$ by the $d$-chain and connect $g$ to the ladder and $h$ to the endgagdet as depicted in \cref{fig:dPathD1}.
\begin{figure}[ht]
    \centering
    \resizebox{12cm}{!}{
    
    \begin{tikzpicture}[
    node distance = 5mm and 5mm,
         V/.style = {circle, draw, fill=gray!30},
    every edge quotes/.style = {auto, font=\footnotesize, sloped}
                        ]

\tikzstyle{overbrace text style}=[font=\large, above, pos=.5, yshift=4mm]
\tikzstyle{underbrace text style}=[font=\large, below, pos=.5, yshift=-4mm]
\tikzstyle{overbrace style}=[decorate,decoration={brace,raise=3mm,amplitude=3pt}]
\tikzstyle{underbrace style}=[decorate,decoration={brace,raise=3mm,amplitude=3pt,mirror}]

\begin{scope}[nodes=V]

\node[fill=red!30] (a) at (1,-2)   {$u_1$};
\node[fill=red!30] (b) at (3,-2)   {$u_2$};
\node[fill=red!30] (c) at (1,-4)   {$\ell_1$};
\node[fill=red!30] (d) at (3,-4)   {$\ell_2$};

\node[fill=red!30] (e) at (9,-2)   {$u_d$};
\node[fill=red!30] (f) at (9,-4)   {$\ell_d$};

\node[fill=red!30] (0) at (11, 2) {0};
\node[fill=red!30] (1) at (14,2) {1};
\node[fill=red!30] (2) at (11,0)   {2};
\node[fill=red!30] (3) at (12.5, 0)          {3};
\node[fill=red!30] (4) at (11, -2)    {4};
\node[fill=red!30] (6) at (14, -2)    {6};
\node[fill=red!30] (7) at (11, -4)          {7};
\node[fill=red!30] (8) at (12.5, -4)          {8};
\node[fill=red!30] (9) at (14, -4)          {9};
\node[fill=red!30] (5) at (12.5, -2) {5};

        \end{scope}
    \draw   

            (a) edge[ultra thick, red] (b)
            (a) edge[ultra thick, red] (c)
            (c) edge[ultra thick, red] (d)
            (b) edge[ultra thick, red] (d)

            (e) edge[ultra thick, red] (4)
            (f) edge[ultra thick, red] (7)
            (e) edge[ultra thick, red] (f)

        (0) edge[ultra thick, red] (1)
        (0) edge[ultra thick, red] (3)
        (0) edge[ultra thick, red] (2)
        (1) edge[ultra thick, red] (3)
        (3) edge[ultra thick, red] (5)
        (2) edge[ultra thick, red] (4)
        (2) edge[ultra thick, red] (5)
        (4) edge[ultra thick, red] (7)
        (1) edge[ultra thick, red] (6)
        (5) edge[ultra thick, red] (8)
        (4) edge[ultra thick, red] (7)
        (7) edge[ultra thick, red] (8)
        (8) edge[ultra thick, red] (9)
        (6) edge[ultra thick, red] (9)
        

        (b) edge[ultra thick, red] (4.5, -2)
        (d) edge[ultra thick, red] (4.5, -4)

        (e) edge[ultra thick, red] (7.5, -2)
        (f) edge[ultra thick, red] (7.5, -4)

        (5.75, -2) edge[loosely dotted, ultra thick, red] (6.25, -2)
        (5.75, -4) edge[loosely dotted, ultra thick, red] (6.25, -4);


\draw [overbrace style] (a.north west) -- (e.north east) node [overbrace text style] {A ladder of length $d$};

\draw [overbrace style] (0.north west) -- (1.north east) node [overbrace text style] {One endgadget};



        \end{tikzpicture}
    }
    \caption{The chain $\mathfrak{C}_{[d,g,h]}$.}
    \label{fig:dPathD1}
\end{figure}
We write $\mathfrak{C}_{[d,g,h]}(v)$ to denote vertex $v$ of the $d$-chain connecting $g$ to $h$, where $v$ might be any label in the set $\left\{u_i, \ell_i, j \mid i \in [d], j \in \{0,\ldots, 9\}\right\}$. We always use the labels $u_i$ and $\ell_i$ for the vertices of the ladder, where $u$ stands for \emph{upper} and $\ell$ for \emph{lower}. Clearly, this notation is well-defined and unique for every vertex.
For $1\leq i <d,$ we refer to the edges of the form $\left\{ \mathfrak{C}_{[d,g,h]}(u_i), \mathfrak{C}_{[d,g,h]}(u_{i+1}) \right\}$ as \textbf{upper chain edges}, and to those of the form $\left\{ \mathfrak{C}_{[d,g,h]}(\ell_i), \mathfrak{C}_{[d,g,h]}(\ell_{i+1}) \right\}$ as the \textbf{lower chain edges}.

We build a second type of subgraph for every element $g \in G$, called the \textbf{blow-up graph of $g$}, denoted by $A_g$. For each element $g \in G$, we \emph{blow up} the corresponding vertex $g$ in $\Cay_{G,S}$ as depicted in \cref{fig:InteriorCycles}. In the following, we give a formal description of this modification.
We begin by describing the general case where $k = |S| \geq 3$. We will then adjust the constructed blow-up graph to the cases $k=1$ and $k=2$.
For every colour $d \in [k]$, we use four \textbf{connector vertices} $\{a_g^{d.1}, a_g^{d.2}, a_g^{d.3}, a_g^{d.4} \}$, and two \textbf{center vertices} $\{ c_g^{d.1} , c_g^{d.2}\}$ for $g\in G$. We refer to these vertices, together with the five edges connecting them, as the \textbf{$s_d$-block of $g$}, with reference to the generator $s_d$ associated with the colour $d$.
In particular, each $s_d$-block of $g$ contains the edges $\{a_g^{d.1},a_g^{d.2}\}$, $\{a_g^{d.2}, c_g^{d.1}\}$, $\{c_g^{d.1},
a_g^{d.3}\}$, $\{a_g^{d.3},a_g^{d.4}\}$ and $\{ c_g^{d.1} , c_g^{d.2}\}$.
We build an interior cycle over all center vertices with superscript 2, i.e.\ a cycle over all $c_g^{d.2}$ for $d \in [k]$. Finally, we connect all $s_d$-blocks to one another by adding the edge $\left\{ a_g^{d.4} , a_g^{(d+1 \modd k).1} \right\}$ between each pair of blocks for all $d \in [k]$. The resulting graph is the \textbf{blow-up graph $A_g$ of $g$}.
As mentioned, the cases $k = 1,2$ differ slightly from the above definition. When $k=1$, the center vertices are omitted entirely. For $k=2$, we use only one center vertex per colour $d \in [2]$. All three cases are depicted in \cref{fig:InteriorCycles}, where each $s_d$-block is highlighted by a distinct colour for its vertices and edges.

\begin{figure}[!ht]
    \begin{subfigure}[b]{0.4\textwidth}
    \centering
    \resizebox{\linewidth}{!}{
    \begin{tikzpicture}[
    node distance = 3mm and 3mm,
         V/.style = {circle, draw, fill=gray!30},
    every edge quotes/.style = {auto, font=\footnotesize, sloped}
                        ]

\tikzstyle{overbrace text style}=[font=\large, above, pos=.5, yshift=4mm]
\tikzstyle{underbrace text style}=[font=\large, below, pos=.5, yshift=4mm]
\tikzstyle{overbrace style}=[decorate,decoration={brace,raise=3mm,amplitude=3pt}]
\tikzstyle{underbrace style}=[decorate,decoration={brace,raise=3mm,amplitude=3pt,mirror}]

\begin{scope}[nodes=V]

\node[circle, draw, fill=cyan!30] (7) at (3,1)   {$a_g^{1.1}$};
\node[circle, draw, fill=cyan!30] (8) at (4.5,1)   {$a_g^{1.2}$};
\node[circle, draw, fill=cyan!30] (11) at (7.5,1)   {$a_g^{1.3}$};
\node[circle, draw, fill=cyan!30] (12) at (9,1)   {$a_g^{1.4}$};

\end{scope}

\draw

        (7) edge[dashed] (3, 2.7)
        (8) edge[dashed] (4.5, 2.7)
        (11) edge[dashed] (7.5, 2.7)
        (12) edge[dashed] (9, 2.7)

        (7) edge[ultra thick, cyan] (8)
        (8) edge[ultra thick, cyan] (11)
        (11) edge[ultra thick, cyan] (12)
        (7) edge[bend right = 70, ultra thick, cyan] (12);

\draw [draw = none] (3, 1.5) -- (4.5, 1.5) node [overbrace text style, font=\small] {$\mathfrak{C}_{[1,g,h_1]}$};

\draw [draw = none] (7.5, 1.5) -- (9, 1.5) node [overbrace text style, font=\small] {$\mathfrak{C}_{[1,h_1^{\prime}, g]}$};

        \end{tikzpicture}
    }
    \caption{$k=1$}
    \label{fig:dummyNode1Gen}
    \end{subfigure}
    \hfill
    \begin{subfigure}[b]{0.4\textwidth}
        \centering
        \resizebox{\linewidth}{!}{
        \begin{tikzpicture}[
    node distance = 3mm and 3mm,
         V/.style = {circle, draw, fill=gray!30},
    every edge quotes/.style = {auto, font=\footnotesize, sloped}
                        ]

\tikzstyle{overbrace text style}=[font=\large, above, pos=.5, yshift=4mm]
\tikzstyle{underbrace text style}=[font=\large, below, pos=.5, yshift=4mm]
\tikzstyle{overbrace style}=[decorate,decoration={brace,raise=3mm,amplitude=3pt}]
\tikzstyle{underbrace style}=[decorate,decoration={brace,raise=3mm,amplitude=3pt,mirror}]

\begin{scope}[nodes=V]

\node[circle, draw, fill=orange!30] (19) at (9,-1)   {$a_g^{2.1}$};
\node[circle, draw, fill=orange!30] (20) at (7.5,-1)   {$a_g^{2.2}$};
\node[shape = diamond, draw,fill=orange!30] (21) at (6,-1)   {$c_g^{2.1}$};
\node[circle, draw, fill=orange!30] (23) at (4.5,-1)   {$a_g^{2.3}$};
\node[circle, draw, fill=orange!30] (24) at (3,-1)   {$a_g^{2.4}$};

\node[circle, draw, fill=cyan!30] (7) at (3,1)   {$a_g^{1.1}$};
\node[circle, draw, fill=cyan!30] (8) at (4.5,1)   {$a_g^{1.2}$};
\node[shape = diamond, draw,fill=cyan!30] (9) at (6,1)   {$c_g^{1.1}$};
\node[circle, draw, fill=cyan!30] (11) at (7.5,1)   {$a_g^{1.3}$};
\node[circle, draw, fill=cyan!30] (12) at (9,1)   {$a_g^{1.4}$};

\end{scope}

\draw

        (7) edge[dashed] (3, 2.5)
        (8) edge[dashed] (4.5, 2.5)
        (11) edge[dashed] (7.5, 2.5)
        (12) edge[dashed] (9, 2.5)
        (9) edge (21)
        (12) edge (19)
        (24) edge (7)

        (24) edge[dashed] (3, -2.5)
        (23) edge[dashed] (4.5, -2.5)
        (20) edge[dashed] (7.5, -2.5)
        (19) edge[dashed] (9, -2.5)

        (7) edge[ultra thick, cyan] (8)
        (8) edge[ultra thick, cyan] (9)
        (9) edge[ultra thick, cyan] (11)
        (11) edge[ultra thick, cyan] (12)

        (19) edge[ultra thick, orange] (20)
        (20) edge[ultra thick, orange] (21)
        (21) edge[ultra thick, orange] (23)
        (23) edge[ultra thick, orange] (24);

\draw [draw = none] (3, 1.5) -- (4.5, 1.5) node [overbrace text style, font=\small] {$\mathfrak{C}_{[1,g,h_1]}$};
\draw [draw = none] (3, -2.3) -- (4.5, -2.3) node [underbrace text style, font=\small] {$\mathfrak{C}_{[2,h_2^{\prime}, g]}$};

\draw [draw = none] (7.5, 1.5) -- (9, 1.5) node [overbrace text style, font=\small] {$\mathfrak{C}_{[1,h_1^{\prime}, g]}$};
\draw [draw = none] (7.5, -2.3) -- (9, -2.3) node [underbrace text style, font=\small] {$\mathfrak{C}_{[2,g,h_2]}$};

        \end{tikzpicture}
        }
        \caption{$k=2$}
        \label{fig:interiorD2_2Gens}
    \end{subfigure}
    \begin{subfigure}[b]{\textwidth}
        \centering
        \resizebox{12cm}{!}{\begin{tikzpicture}[
    node distance = 3mm and 3mm,
         V/.style = {circle, draw, fill=gray!30},
    every edge quotes/.style = {auto, font=\footnotesize, sloped}
                        ]

\tikzstyle{overbrace text style}=[font=\large, above, pos=.5, yshift=4mm]
\tikzstyle{underbrace text style}=[font=\large, below, pos=.5, yshift=4mm]
\tikzstyle{overbrace style}=[decorate,decoration={brace,raise=3mm,amplitude=3pt}]
\tikzstyle{underbrace style}=[decorate,decoration={brace,raise=3mm,amplitude=3pt,mirror}]

\begin{scope}[nodes=V]

\node[circle, draw, fill=green!30] (13) at (12,0)   {$a_g^{2.1}$};
\node[circle, draw, fill=green!30] (14) at (12,-1.5)   {$a_g^{2.2}$};
\node[shape = diamond, draw,fill=green!30] (15) at (12,-3)   {$c_g^{2.1}$};
\node[shape = diamond, draw, fill=green!30] (16) at (10,-3)   {$c_g^{2.2}$};
\node[circle, draw, fill=green!30] (17) at (12, -4.5)   {$a_g^{2.3}$};
\node[circle, draw, fill=green!30] (18) at (12,-6)   {$a_g^{2.4}$};

\node[circle, draw, fill=orange!30] (19) at (9,-8)   {$a_g^{3.1}$};
\node[circle, draw, fill=orange!30] (20) at (7.5,-8)   {$a_g^{3.2}$};
\node[shape = diamond, draw,fill=orange!30] (21) at (6,-8)   {$c_g^{3.1}$};
\node[shape = diamond, draw, fill=orange!30] (22) at (6,-6)   {$c_g^{3.2}$};
\node[circle, draw, fill=orange!30] (23) at (4.5,-8)   {$a_g^{3.3}$};
\node[circle, draw, fill=orange!30] (24) at (3,-8)   {$a_g^{3.4}$};

\node[circle, draw, fill=red!30] (1) at (0,0)   {$a_g^{4.4}$};
\node[circle, draw, fill=red!30] (2) at (0,-1.5)   {$a_g^{4.3}$};
\node[shape = diamond, draw,fill=red!30] (3) at (0,-3)   {$c_g^{4.1}$};
\node[shape = diamond, draw, fill=red!30] (4) at (2,-3)   {$c_g^{4.2}$};
\node[circle, draw, fill=red!30] (5) at (0,-4.5)   {$a_g^{4.2}$};
\node[circle, draw, fill=red!30] (6) at (0,-6)   {$a_g^{4.1}$};

\node[circle, draw, fill=cyan!30] (7) at (3,2)   {$a_g^{1.1}$};
\node[circle, draw, fill=cyan!30] (8) at (4.5,2)   {$a_g^{1.2}$};
\node[shape = diamond, draw,fill=cyan!30] (9) at (6,2)   {$c_g^{1.1}$};
\node[shape = diamond, draw, fill=cyan!30] (10) at (6,0)   {$c_g^{1.2}$};
\node[circle, draw, fill=cyan!30] (11) at (7.5,2)   {$a_g^{1.3}$};
\node[circle, draw, fill=cyan!30] (12) at (9,2)   {$a_g^{1.4}$};

\end{scope}

\draw

        (7) edge[dashed] (3, 3.5)
        (8) edge[dashed] (4.5, 3.5)
        (11) edge[dashed] (7.5, 3.5)
        (12) edge[dashed] (9, 3.5)

        (24) edge[dashed] (3, -9.5)
        (23) edge[dashed] (4.5, -9.5)
        (20) edge[dashed] (7.5, -9.5)
        (19) edge[dashed] (9, -9.5)

        (1) edge[dashed] (-1.5, 0)
        (2) edge[dashed] (-1.5, -1.5)
        (5) edge[dashed] (-1.5, -4.5)
        (6) edge[dashed] (-1.5, -6) 

        (13) edge[dashed] (13.5, 0)
        (14) edge[dashed] (13.5, -1.5)
        (17) edge[dashed] (13.5, -4.5)
        (18) edge[dashed] (13.5, -6) 

        (1) edge[ultra thick, red] (2)
        (2) edge[ultra thick, red] (3)
        (3) edge[ultra thick, red] (4)
        (3) edge[ultra thick, red] (5)
        (5) edge[ultra thick, red] (6)

        (4) edge[] (10)
        (10) edge[] (16)
        (16) edge[] (22)
        (22) edge[] (4)

        (1) edge[] (7)

        (12) edge[] (13)
        (18) edge[] (19)
        (24) edge[] (6)

        (7) edge[ultra thick, cyan] (8)
        (8) edge[ultra thick, cyan] (9)
        (9) edge[ultra thick, cyan] (10)
        (9) edge[ultra thick, cyan] (11)
        (11) edge[ultra thick, cyan] (12)

        (13) edge[ultra thick, green] (14)
        (14) edge[ultra thick, green] (15)
        (15) edge[ultra thick, green] (16)
        (15) edge[ultra thick, green] (17)
        (17) edge[ultra thick, green] (18)

        (19) edge[ultra thick, orange] (20)
        (20) edge[ultra thick, orange] (21)
        (21) edge[ultra thick, orange] (22)
        (21) edge[ultra thick, orange] (23)
        (23) edge[ultra thick, orange] (24);

\draw [draw = none] (3, 2.5) -- (4.5, 2.5) node [overbrace text style, font=\small] {$\mathfrak{C}_{[1,g,h_1]}$};
\draw [draw = none] (3, -9.3) -- (4.5, -9.3) node [underbrace text style, font=\small] {$\mathfrak{C}_{[3,h_3^{\prime}, g]}$};

\draw [draw = none] (7.5, 2.5) -- (9, 2.5) node [overbrace text style, font=\small] {$\mathfrak{C}_{[1,h_1^{\prime}, g]}$};
\draw [draw = none] (7.5, -9.3) -- (9, -9.3) node [underbrace text style, font=\small] {$\mathfrak{C}_{[3,g,h_3]}$};

\draw [draw = none] (-1.5, -6) -- (-0.5, -6) node [overbrace text style, font=\small] {$\mathfrak{C}_{[4,g,h_4]}$};
\draw [draw = none] (13.5, -6) -- (12.5, -6) node [overbrace text style, font=\small] {$\mathfrak{C}_{[2,h_2^{\prime}, g]}$};

\draw [draw = none] (-1.5, -1.5) -- (-0.5, -1.5) node [overbrace text style, font=\small] {$\mathfrak{C}_{[4,h_4^{\prime}, g]}$};
\draw [draw = none] (13.5, -1.5) -- (12.5, -1.5) node [overbrace text style, font=\small] {$\mathfrak{C}_{[2,g,h_2]}$};

        \end{tikzpicture}}
        \caption{$k=4$}
        \label{fig:interiorD2}
    \end{subfigure}
    \caption{The blow-up graph $A_g$ for various $k$.}
    \label{fig:InteriorCycles}
\end{figure}

In the final construction step of our cubic graph, we connect the blow-up graphs to the $d$-chains.
Given an edge $(g,h)$ of colour $d$ in $E(\Cay_{G,S})$, we connect the vertex $a_g^{d.1}$ to the vertex $\mathfrak{C}_{[d,g,h]}(u_1)$, and $a_g^{d.2}$ to $\mathfrak{C}_{[d,g,h]}(\ell_1)$. Similarly, for an edge $(h^{\prime},g)$ of colour $d$ in the Cayley graph, we connect $a_g^{d.4}$ to $\mathfrak{C}_{[d,h^{\prime},g]}(6)$, and $a_g^{d.3}$ to $\mathfrak{C}_{[d,h^{\prime}, g]}(9)$. We illustrate this construction in \cref{fig:sdBlock}.
For completeness, we give a formal definition of our proposed cubic graph.

\begin{figure}[ht!]
    \centering
    \resizebox{10cm}{!}{
    \begin{tikzpicture}[
    node distance = 3mm and 3mm,
         V/.style = {circle, draw, fill=gray!30},
    every edge quotes/.style = {auto, font=\footnotesize, sloped}
                        ]

\tikzstyle{overbrace text style}=[font=\large, above, pos=.5, yshift=4mm]
\tikzstyle{underbrace text style}=[font=\large, below, pos=.5, yshift=-4mm]
\tikzstyle{overbrace style}=[decorate,decoration={brace,raise=3mm,amplitude=3pt}]
\tikzstyle{underbrace style}=[decorate,decoration={brace,raise=3mm,amplitude=3pt,mirror}]

\begin{scope}[nodes=V]

\node (a) at (4, 0)    {$u_1$};
\node (b) at (4, -1.5)          {$\ell_1$};

\node (c) at (7, 0) {$u_2$};
\node (d) at (7, -1.5) {$\ell_2$};

\node (a8) at (4, -4.5) {9};
\node (a5) at (4, -6) {6};
\node (a7) at (5.5, -4.5) {8};
\node (a6) at (7, -4.5) {7};
\node (a4) at (7, -6) {4};
\node (a3) at (5.5, -7) {3};
\node (a2) at (4, -8) {1};
\node (a1) at (7, -8) {$0$};
\node (a0) at (7, -7) {$2$};
\node (a9) at (5.5, -6) {$5$};

\node[circle, draw, fill=red!30] (1) at (0,0)   {$a_g^{d.1}$};
\node[circle, draw, fill=red!30] (2) at (0,-1.5)   {$a_g^{d.2}$};
\node[shape = diamond, draw,fill=red!30] (3) at (0,-3)   {$c_g^{d.1}$};
\node[shape = diamond, draw, fill=red!30] (4) at (-2,-3)   {$c_g^{d.2}$};
\node[circle, draw, fill=red!30] (5) at (0,-4.5)   {$a_g^{d.3}$};
\node[circle, draw, fill=red!30] (6) at (0,-6)   {$a_g^{d.4}$};

\node(27) at (-1, 1.5) {$a_g^{(d-1).4}$};
\node (28) at (-1, -7.5) {$a_g^{(d+1).1}$};

\node[shape = diamond] (25) at (-3, -1.5) {$c_g^{(d-1).2}$};
\node[shape = diamond] (26) at (-3, -4.5) {$c_g^{(d+1).2}$};

\end{scope}

\draw   

        (1) edge (a)
        (2) edge (b)

        (a) edge (c)
        (b) edge (d)
        (c) edge (d)
        (a) edge (b)

        (c) edge[dashed] (8.2,0)
        (d) edge[dashed] (8.2,-1.5)

        (a4) edge[dashed] (8.1, -6)
        (a6) edge[dashed] (8.1, -4.5)

        (5) edge (a8)
        (6) edge (a5)
        
        (27) edge[dashed] (-2, 2.5)
        (27) edge[dashed] (0, 2.5)

        (28) edge[dashed] (-2, -8.5)
        (28) edge[dashed] (0, -8.5)

        (25) edge[dashed] (-4,0)
        (25) edge[dashed] (-2, 0)

        (26) edge[dashed] (-4, -6)
        (26) edge[dashed] (-2, -6)

        (1) edge [] (27)
        (6) edge[] (28)

        (4) edge[] (25)
        (4) edge[] (26)

        (1) edge[ultra thick, red] (2)
        (2) edge[ultra thick, red] (3)
        (3) edge[ultra thick, red] (4)
        (3) edge[ultra thick, red] (5)
        (5) edge[ultra thick, red] (6)

        (a1) edge (a2)
        (a1) edge (a3)
        (a1) edge (a0)
        (a0) edge (a4)
        (a2) edge (a5) 
        (a2) edge (a3)
        (a3) edge (a9)
        (a7) edge (a9)     
        (a4) edge (a6)
        (a5) edge (a8)
        (a7) edge (a6)
        (a7) edge (a8)
        (a0) edge (a9);

\draw [overbrace style] (3.5, 0.5) -- (8.3, 0.5) node [overbrace text style] {The chain $\mathfrak{C}_{[d,g,h]}$};

\draw [underbrace style] (3.5, -8.5) -- (8.3, -8.5) node [underbrace text style] {The chain $\mathfrak{C}_{[d,h^{\prime},g]}$};

\end{tikzpicture}
    }
    \caption{An $s_d$-block (red) in $g$ and its immediate neighbourhood for $k \geq 3$.}
    \label{fig:sdBlock}
\end{figure}

\begin{definition}\label{defGraphD}
    Let $G$ be a finite group, $S$ a generating set for $G$ of cardinality $k \geq 1$, and $\mathcal{C}$ the $k$-edge-colouring of $\Cay_{G,S}$.
    We define the \textbf{\emph{graph $D_{G,S}$}} as $D_{G, S} := ( V_1 \cup V_2, E_1 \cup E_2 \cup E_3 \cup E_4)$, where
    \begin{enumerate}
        \item $V_1:= \bigcup_{g \in G} V(A_g)$ (the vertices of the blow-up graphs for all $g \in G$),
        \item $V_2 := \bigcup_{d \in [k]} \bigcup_{\substack{(g,h) \in E(\Cay_{G,S}) \\ \mathcal{C}((g,h)) = d}} V(\mathfrak{C}_{[d,g,h]})$ (the $d$-chain vertices for all edges $(g,h) \in E(\Cay_{G,S})$),
        \item $E_1 := \bigcup_{g \in G} E(A_g)$ (the edges of the blow-up graphs for all $g \in G$),
        \item $E_2 := \bigcup_{d \in [k]} \bigcup_{\substack{(g,h) \in E(\Cay_{G,S}) \\ \mathcal{C}((g,h)) = d}} E(\mathfrak{C}_{[d,g,h]})$ (the $d$-chain edges for all edges $(g,h) \in E(\Cay_{G,S})$),
        \item $E_3 := \bigcup_{d \in [k]} \bigcup_{\substack{(g,h) \in E(\Cay_{G,S}) \\ \mathcal{C}((g,h)) = d}} \left\{ \left\{ a_g^{d.1} , \mathfrak{C}_{[d,g,h]}(u_1) \right\} , \left\{ a_g^{d.2} , \mathfrak{C}_{[d,g,h]}(\ell_1) \right\} \right\}$ (the edges connecting a $d$-chain $\mathfrak{C}_{[d,g,h]}$ to the blow-up graph $A_g$).
        \item $E_4 := \bigcup_{d \in [k]} \bigcup_{\substack{(g,h) \in E(\Cay_{G,S}) \\ \mathcal{C}((g,h)) = d}} \left\{ \left\{ \mathfrak{C}_{[d,g,h]}(6) , a_h^{d.4} \right\} , \left\{ \mathfrak{C}_{[d,g,h]}(9) , a_h^{d.3} \right\} \right\}$ (the edges connecting a $d$-chain $\mathfrak{C}_{[d,g,h]}$ to the blow-up graph $A_h$).
    \end{enumerate}
\end{definition}

Note that the constructed graph $D_{G,S}$ is cubic for any finite group $G$ and generating set $S$.
Let $G$ be a group of order $n$ with a generating set $S$ of cardinality $k$. Then, for $k\geq 3$, the graph $D_{G,S}$ contains precisely $k^2n + 17kn$ vertices and $\frac{3k^2n+51kn}{2}$ edges. It has $36n$ vertices and $54n$ edges when $k=2$, and $16n$ vertices and $24n$ edges when $k=1$.
In general, the construction of the graph $D_{G,S}$
depends on the choice of the generating set $S$ for $G$ and the ordering of the generators in $S$. Consequently, changing the ordering of the generators in the above construction may result in non-isomorphic graphs. This can be verified in GAP using our implementations by constructing the graph $D_{G,S}$ for the group 
$G=A_5$ generated by $S=\{(1,5)(2,4), (1,2,4,3,5), (2,5,3) \}$. Here, we choose the orderings $(1,5)(2,4) < (1,2,4,3,5) < (2,5,3)$ and $(1,5)(2,4) < (2,5,3) < (1,2,4,3,5)$ and observe that the two resulting graphs are indeed not isomorphic. Note that the same example has been used in \cite{SurfacesWithAuto} to show that Frucht's cubic graph construction depends on the ordering of the generators of a given group.
\subsection{The Automorphism Group of the Graph $D_{G,S}$}\label{subSecAutD}
In this section, we prove that the automorphism group of the graph $D_{G,S}$ from \cref{defGraphD} is isomorphic to our initial group $G$. To do so, we first show that every automorphism of the edge-coloured Cayley graph $\Cay_{G,S}$ induces an automorphism of the graph $D_{G,S}$. Then, we prove that every automorphism of the graph $D_{G,S}$ already stems from the left multiplication with an element $g \in G$ within the group itself. 
To avoid redundancy, we only state the results for the cases in which the generating set $S$ of the given group $G$ has at least $k \geq 3$ elements. The remaining two cases $k \in [2]$ simply follow by omitting the images of the non-existing center vertices in the smaller blow-up graphs.
We start with the translation of automorphisms of the Cayley graph $\Cay_{G,S}$ to automorphisms of the graph $D_{G,S}$.

\begin{lemma}\label{lemmaAutCayleyAutD}
    Every edge-colour-respecting automorphism $\pi \in \Aut(\Cay_{G,S})$ of the edge-coloured graph $\Cay_{G,S}$ induces an automorphism $\tilde{\pi}$ of the graph $D_{G,S}$.
\end{lemma}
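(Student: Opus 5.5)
We prove the statement by writing down $\tilde{\pi}$ explicitly on the vertices of $D_{G,S}$ and checking that it maps each of the edge sets $E_1,\dots,E_4$ of \cref{defGraphD} onto itself.

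Let $\pi\in\Aut(\Cay_{G,S})$ be a bijection of $G$. It preserves both orientation and colour: $(g,h)$ is an edge of colour $d$ if and only if $(\pi(g),\pi(h))$ is an edge of colour $d$. Every vertex of $D_{G,S}$ carries a label that records either a group element or a coloured Cayley edge. We define $\tilde{\pi}$ by acting only on these group-element indices:
\[
\tilde{\pi}(a_g^{d.i}) := a_{\pi(g)}^{d.i},\qquad \tilde{\pi}(c_g^{d.j}) := c_{\pi(g)}^{d.j},\qquad \tilde{\pi}\bigl(\mathfrak{C}_{[d,g,h]}(v)\bigr) := \mathfrak{C}_{[d,\pi(g),\pi(h)]}(v),
\]
for all $d\in[k]$, $i\in[4]$, $j\in[2]$ and every chain label $v\in\{u_i,\ell_i,0,\dots,9\}$.

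Since $\pi$ is a colour-preserving edge automorphism, $\mathfrak{C}_{[d,\pi(g),\pi(h)]}$ is indeed a chain of $D_{G,S}$ whenever $\mathfrak{C}_{[d,g,h]}$ is. Hence $\tilde{\pi}$ is a well-defined map $V_1\cup V_2\to V_1\cup V_2$. It preserves the two parts $V_1$ and $V_2$, and on each part it is a bijection, because $\pi$ is a bijection on $G$ and on $E(\Cay_{G,S})$. Moreover, $\widetilde{\pi^{-1}}$ is its inverse.

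We now check the edges class by class.
\begin{itemize}
\item $E_1$: the edges of $A_g$ are defined uniformly in $g$, with the same superscript pattern for every $g$. So $\tilde{\pi}$ maps $E(A_g)$ bijectively onto $E(A_{\pi(g)})$.
\item $E_2$: the edges of $\mathfrak{C}_{[d,g,h]}$ depend only on $d$, so they are mapped onto the edges of $\mathfrak{C}_{[d,\pi(g),\pi(h)]}$.
\item $E_3$: the edge $\{a_g^{d.1},\mathfrak{C}_{[d,g,h]}(u_1)\}$ goes to $\{a_{\pi(g)}^{d.1},\mathfrak{C}_{[d,\pi(g),\pi(h)]}(u_1)\}$. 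This lies in $E_3$ precisely because $(\pi(g),\pi(h))$ is a Cayley edge of colour $d$. The edge at $\ell_1$ is handled in the same way.
\item $E_4$: the edges $\{\mathfrak{C}_{[d,g,h]}(6),a_h^{d.4}\}$ and $\{\mathfrak{C}_{[d,g,h]}(9),a_h^{d.3}\}$ are handled identically, now using the endpoint $h\mapsto\pi(h)$.
\end{itemize}
Applying the same argument to $\pi^{-1}$ shows that non-edges are mapped to non-edges. Therefore $\tilde{\pi}$ is an automorphism of $D_{G,S}$.

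The only point needing care is the $E_3$ and $E_4$ check. Here orientation preservation is essential: $\pi$ must send the source $g$ of $(g,h)$ to the source of the image edge. Otherwise the ladder end of the chain would be attached to the wrong blow-up graph. Colour preservation is what makes the target chain have the same length $d$.

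For $k\in[2]$, the same definition applies after simply omitting the center vertices that do not exist. One also observes that $\widetilde{\pi\sigma}=\tilde{\pi}\tilde{\sigma}$, so $\pi\mapsto\tilde{\pi}$ is a homomorphism. It is injective because $\pi$ can be read off from the action of $\tilde{\pi}$ on the vertices $a_g^{1.1}$. This is useful later, for the isomorphism $\Aut(D_{G,S})\cong G$.
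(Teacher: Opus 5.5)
Your proof is correct and follows the same route as the paper. Both define $\tilde{\pi}$ by replacing each group-element index $g$ with $\pi(g)$ in the labels $a_g^{d.i}$, $c_g^{d.j}$ and $\mathfrak{C}_{[d,g,h]}(v)$, then check bijectivity and edge preservation; your class-by-class check of $E_1,\dots,E_4$, including where orientation and colour preservation are needed, spells out what the paper only asserts as ``easy to see''.
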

\begin{proof}
    First, we recall that $V(\Cay_{G,S}) = \{ g \mid g \in G\}$. Thus, let $g,h$ be elements in $G$ such that $gs_d = h$ for $s_d \in S$. Now, we define 
    \begin{equation*}
        \tilde{\pi} : V(D_{G,S}) \longrightarrow V(D_{G,S}), \; v \longmapsto \begin{cases}
            a_{\pi(g)}^{d.i} & \textnormal{if } v = a_g^{d.i} \textnormal{ for } d \in [k] \textnormal{ and } i \in [4], \\
            c_{\pi(g)}^{d.i} & \textnormal{if } v = c_g^{d.i} \textnormal{ for } d \in [k] \textnormal{ and } i \in [2], \\
            \mathfrak{C}_{[d,\pi(g),\pi(h)]}(v) & \textnormal{if } v = \mathfrak{C}_{[d,g,h]}(v) \textnormal{ for } d \in [k] .
        \end{cases}
    \end{equation*}
Clearly, $\tilde{\pi}$ is a well-defined bijection, and therefore $\tilde{\pi} \in \textnormal{Sym}(V(D_{G,S}))$. Moreover, it is easy to see that in fact all edge-relations of the graph are preserved. Thus, $\tilde{\pi} \in \Aut(D_{G,S}).$
\end{proof}

The remainder of this section is devoted to showing that the inverse direction holds. In particular, we show that every automorphism of the graph $D_{G,S}$ corresponds to an automorphism $\tilde{\pi}$ as constructed in the above proof.

\begin{lemma}\label{lemmaAutDAutCayley}
    Every automorphism $\pi$ of $D_{G,S}$ is induced by a left-regular action $\pi_g \in \mathcal{L}(G)$ of $G$.
\end{lemma}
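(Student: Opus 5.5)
The plan is to show that any automorphism $\pi$ of $D_{G,S}$ preserves enough local structure to determine an automorphism of the edge-coloured Cayley graph $\Cay_{G,S}$. We then invoke $\Aut(\Cay_{G,S})=\mathcal{L}(G)$ and compare with \cref{lemmaAutCayleyAutD}.

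The first and main step is to identify the endgadgets intrinsically, via short cycles. In the endgadget the vertices $0,1,3$ form a triangle. We check that no other triangles exist in $D_{G,S}$: ladder squares, the interior cycle of length $k$ (for $k\geq 4$), and the cycles through blocks are all longer. Note that for $k=3$ the interior cycle on the $c_g^{d.2}$ is itself a triangle, and for $k=1$ the blow-up graph is a $4$-cycle. So we distinguish triangles by their neighbourhoods. An endgadget triangle $\{0,1,3\}$ has the property that its three outgoing neighbours $2,6,5$ lie on a common short cycle structure ($2$–$5$ adjacent, and $6$–$9$–$8$–$5$ a path). In contrast, the interior triangle for $k=3$ has outgoing neighbours $c_g^{d.1}$, which are pairwise at distance larger than $2$. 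More robustly, we can count $4$- and $5$-cycles through each vertex: vertex $0$ lies on specific cycles $0$-$2$-$5$-$3$ (a $4$-cycle), etc. I expect this case analysis, including the small cases $k\in\{1,2,3\}$ and short ladders $d=1$, to be the main obstacle. I would handle it by tabulating, for each vertex type, the number of triangles and $4$-cycles containing it. This yields an $\Aut$-invariant vertex labelling that separates endgadget vertices, ladder vertices, connector vertices and center vertices.

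Once endgadgets are recognised, $\pi$ permutes the endgadgets. Each endgadget has exactly two edges leaving it (from $6$ and $7$ in the chain picture, or equivalently from $4,7$ to the ladder and from $6,9$ to $A_h$), and the two sides are distinguishable: one side is attached to a ladder whose rungs form $4$-cycles, the other to connector vertices $a_h^{d.3},a_h^{d.4}$ of a blow-up graph. Hence $\pi$ maps each chain $\mathfrak{C}_{[d,g,h]}$ onto a chain, preserving the ladder end versus the endgadget end. The ladder is the maximal sequence of $4$-cycles between the endgadget and the blow-up graph, so its length $d$ is preserved. Thus $\pi$ maps $\mathfrak{C}_{[d,g,h]}$ to some $\mathfrak{C}_{[d,g',h']}$.

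Removing all chain vertices leaves the disjoint union of the blow-up graphs $A_g$, so $\pi$ permutes them. This defines a permutation $\sigma$ of $G$ with $\pi(A_g)=A_{\sigma(g)}$. Since the chain $\mathfrak{C}_{[d,g,h]}$ attaches its ladder end to $A_g$ and its endgadget end to $A_h$, and $\pi$ preserves both the end types and $d$, we get $\pi(\mathfrak{C}_{[d,g,h]})=\mathfrak{C}_{[d,\sigma(g),\sigma(h)]}$. Therefore $\sigma$ is a colour- and orientation-preserving automorphism of $\Cay_{G,S}$, so $\sigma=\pi_x$ for some $x\in G$.

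Finally, $\pi\circ\widetilde{\pi_x}^{-1}$ fixes every blow-up graph and every chain setwise. We verify that it is the identity. Inside a chain, the endgadget and ladder have no nontrivial automorphism fixing their attachment edges, given the asymmetry of the endgadget. The connector vertices of each $s_d$-block are determined as the endpoints of the attaching edges of the chains of colour $d$, and the center vertices are then forced as the unique remaining neighbours. Hence $\pi=\widetilde{\pi_x}$, i.e.\ $\pi$ is induced by the left-regular action of $x$.
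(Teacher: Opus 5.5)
Your argument is correct in outline and follows the paper's strategy. Rigidity of the endgadget forces $d$-chains onto $d$-chains with colour and direction preserved (\cref{lemmaDChainsPermus}), blow-up graphs are permuted as wholes (\cref{lemmaDNGPermus}), and the induced permutation of $G$ is a coloured Cayley automorphism, hence a left multiplication. Your variations are improvements. You cite $\Aut(\Cay_{G,S})=\mathcal{L}(G)$ directly instead of re-deriving it by induction on word length as in \cref{lemmaAutDLeftAction}. You prove triviality of the kernel explicitly via $\pi\circ\widetilde{\pi_x}^{-1}$, which the paper only encodes in the label-preserving form of its two lemmas. You also notice that for $k=3$ the centre cycle $c_g^{1.2}c_g^{2.2}c_g^{3.2}$ is a triangle. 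This contradicts the paper's assertion that the only triangles are the endgadget triangles $\{0,1,3\}$. Your test (two outgoing neighbours adjacent) separates them correctly, since $c_g^{1.1}$ and $c_g^{2.1}$ are at distance $3$.

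Two points in your Steps 1--2 need repair.

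\emph{First,} an endgadget has four outgoing edges, not two: at $4,7$ towards the ladder and at $6,9$ towards $A_h$. Both pairs close a $4$-cycle, namely $4\,7\,\ell_d\,u_d$ and $6\,9\,a_h^{d.3}\,a_h^{d.4}$. So the two sides must be distinguished by the endgadget's internal structure, not by what they attach to. For example, $6$ is adjacent to the triangle vertex $1$, while neither $4$ nor $7$ is adjacent to the triangle.

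\emph{Second,} your proposed invariant fails for $k=1$. There $u_1,\ell_1$ and every $a_g^{1.i}$ lie on exactly two $4$-cycles and on no triangle; this is the same blind spot as in the paper, where connectors then have type $(4,4,6)$, like ladder vertices. Moreover, starting from $\{4,7\}$ of $\mathfrak{C}_{[1,g,gs_1]}$, the rungs $u_1\ell_1$, $a_g^{1.1}a_g^{1.2}$, $a_g^{1.4}a_g^{1.3}$ and the pair $6\,9$ of the endgadget of $\mathfrak{C}_{[1,gs_1^{-1},g]}$ form one uninterrupted ladder. Hence neither cycle counts nor ``the maximal sequence of $4$-cycles'' tell you where the chain ends and $A_g$ begins. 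For $k\ge 2$ your counting does work: ladder vertices lie on exactly two $4$-cycles, and every other vertex lies on at most one. For $k=1$, orient each such run of three rungs by the labelled endgadgets at its two ends. The rung adjacent to $\{4,7\}$ is $u_1\ell_1$, and the other two rungs form $A_g$. Since there is only one colour, nothing about $d$ needs to be recovered, and the rest of your argument goes through unchanged.
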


We show the above lemma using partial results\, namely \Cref{lemmaDChainsPermus,lemmaDNGPermus,lemmaAutDLeftAction}. We start by analysing the possibilities of how an automorphism of $D_{G,S}$ can permute the subgraphs forming $d$-chains. For this purpose, we recycle the concept of a vertex \textbf{type}, a triple of numbers $(\lambda_1, \lambda_2, \lambda_3)$ introduced by Frucht in his study of graphs with a prescribed group \cite{frucht}. In a bridgeless cubic graph, the type of a vertex is given by a triple consisting of the lengths of shortest cycles through each pair of incident edges. Moreover, these lengths are arranged in ascending order.
Note that vertex types are a separating invariant under the automorphism group of a graph.
In \Cref{table:types}, we provide the vertex types of the ten vertices in an arbitrary endgadget of $D_{G,S}$. The values can be easily determined by reviewing \cref{fig:dPathD1,fig:InteriorCycles,fig:sdBlock}.

\begin{table}[ht]
    \centering
        \begin{tabular}{ | >{\centering}p{1.5cm}| >{\centering}p{0.5cm}| >{\centering}p{0.5cm}| >{\centering}p{0.5cm}|  }
            \hline
            \multicolumn{4}{|c|}{Types of the Endgadget} \tabularnewline [0.5ex] 
            \hline
            \centering Vertex $j$ & $\lambda_1^j$ & $\lambda_2^j$ & $\lambda_3^j$ \tabularnewline 
            \hline
            0 & 3    &4&  5 \tabularnewline
            1&   3  & 6  &7 \tabularnewline
            2&4 & 5&  7 \tabularnewline
            3 &3 & 4& 5 \tabularnewline
            4&   4  & 5& 7 \tabularnewline
            5& 4  & 5   &6 \tabularnewline
            6 & 4 & 6 & 8 \tabularnewline
            7 & 4 & 5 & 7 \tabularnewline
            8 & 5 & 6 & 8 \tabularnewline
            9 & 4 & 6 & 8 \tabularnewline
            \hline
           \end{tabular}
    \caption{The types for the vertices $\mathfrak{C}_{[d,g,h]}(j)$ of an endgadget in $D_{G,S}$ }
    \label{table:types}
\end{table}
    
By observing \cref{fig:InteriorCycles,fig:dPathD1}, it is easy to check that all connector vertices $a_g^{d.i}$ have the type $(4,8,10)$, and all ladder vertices the type $(4,4,6)$. Within our graph $D_{G,S}$, both types are unique to these two groups of vertices.
We denote the set of vertices of all blow-up graphs by $V(A_G)$, and the set of vertices of all $d$-chains by $V(\mathfrak{C}_G)$. Decomposing $V(D_{G,S})$ into $V(A_G)$ and $V(\mathfrak{C}_G)$ yields a partition of $V(D_{G,S})$.
Every vertex contained in $V(A_G)$ has at least two neighbours within $V(A_G)$, whereas every vertex in $\mathfrak{C}_G$ has at most one neighbour in $V(A_G)$.
That means that for every $\pi \in \Aut(D_{G,S})$, we have 
\[  \pi(V(A_G)) = V(A_G) \textnormal{ and } \pi(V(\mathfrak{C}_G)) = V(\mathfrak{C}_G). \]
Furthermore, we see that the only triangles in our graph $D_{G,S}$ are given by cycles of the form \[ \left( \mathfrak{C}_{[d,g,h]}(0), \mathfrak{C}_{[d,g,h]}(1), \mathfrak{C}_{[d,g,h]}(3) \right),\] where $(g,h) \in E(\Cay_{G,S})$ is of colour $d$. Thus, these three vertices can only be mapped setwise to one another, since automorphisms respect types.
In the following, we show that the image of a $d$-chain in $D_{G,S}$ under an automorphism is again a $d$-chain.

\begin{lemma}\label{lemmaDChainsPermus}
    Let $g,h\in G$ be elements such that $\mathfrak{C}_{[d,g,h]}$ forms a $d$-chain in $D_{G,S}$ and an automorphism $\pi \in \Aut(D_{G,S})$. Then, there are unique $g^{\prime}, h^{\prime} \in G$ such that
    \[ \pi(\mathfrak{C}_{[d,g,h]}(v)) = \mathfrak{C}_{[d,g^{\prime}, h^{\prime}]}(v)  \]
    for all $v \in V(\mathfrak{C}_{[d,g,h]})$. In particular, $\mathfrak{C}_{[d,g^{\prime}, h^{\prime}]}$ also forms a $d$-chain in $D_{G,S}$.
\end{lemma}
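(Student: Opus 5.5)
We argue with the vertex invariants already collected above. Automorphisms preserve types, so by \Cref{table:types} and the remarks after it, $\pi$ maps ladder vertices (type $(4,4,6)$) to ladder vertices, connector vertices (type $(4,8,10)$) to connector vertices, and the triangle $\{\mathfrak{C}_{[d,g,h]}(0),\mathfrak{C}_{[d,g,h]}(1),\mathfrak{C}_{[d,g,h]}(3)\}$ to some triangle $\{\mathfrak{C}_{[d',g',h']}(0),\mathfrak{C}_{[d',g',h']}(1),\mathfrak{C}_{[d',g',h']}(3)\}$. We fix $(d',g',h')$ this way. By connectedness, the graph $\mathfrak{C}_{[d,g,h]}$ plus its four attaching edges to $A_g$ and $A_h$ then has its image determined vertex by vertex. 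The goal is to show that $\pi$ acts as the \enquote{identity on labels} and that $d'=d$.

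First we pin down the endgadget. Inside the triangle, vertex $1$ has type $(3,6,7)$, which differs from the common type $(3,4,5)$ of $0$ and $3$. Hence $\pi$ maps $1$ to $1'$ (primes denote labels in $\mathfrak{C}_{[d',g',h']}$) and maps $\{0,3\}$ to $\{0',3'\}$. The neighbours of $1$ are $0,3,6$, so $6\mapsto 6'$. The third neighbour of $0$ is $2$, of type $(4,5,7)$, and that of $3$ is $5$, of type $(4,5,6)$. These types differ, so $0\mapsto 0'$, $3\mapsto 3'$, $2\mapsto 2'$ and $5\mapsto 5'$. We then propagate along edges, checking types wherever a vertex has two unassigned neighbours:
\begin{itemize}
\item $6$ has neighbours $1,9$ and one connector vertex $a_h^{d.4}$. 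Connector types differ from the type $(4,6,8)$ of $9$, so $9\mapsto 9'$ and $a_h^{d.4}\mapsto a_{h'}^{d'.4}$.
\item $5$ gives $8\mapsto 8'$.
\item $2$ gives $4\mapsto 4'$.
\item $8$ gives $7\mapsto 7'$.
\item $9$ gives $a_h^{d.3}\mapsto a_{h'}^{d'.3}$.
\end{itemize}
Finally, $4$ and $7$ have the ladder neighbours $u_d$ and $\ell_d$ respectively, so $u_d\mapsto u'_{d'}$ and $\ell_d\mapsto \ell'_{d'}$.

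Next we walk down the ladder. Since $\pi$ preserves ladder and connector vertices, we induct backwards from $u_d,\ell_d$. Suppose $u_i\mapsto u'_j$ and $\ell_i\mapsto \ell'_j$ with $i,j>1$. The remaining neighbours are $u_{i-1}\mapsto u'_{j-1}$ and $\ell_{i-1}\mapsto \ell'_{j-1}$, since ladder neighbours must go to ladder neighbours and the rung is already matched. This continues until one side reaches index $1$, where the unmatched neighbours are connector vertices ($a_g^{d.1},a_g^{d.2}$). A connector vertex cannot be the image of a ladder vertex, and vice versa, so both sides reach index $1$ simultaneously. Hence $d=d'$, and $u_i\mapsto u'_i$, $\ell_i\mapsto \ell'_i$ for all $i$. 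The hardest step is this length matching: it is exactly where the colour is encoded, and it relies on the type separation between ladder and connector vertices stated before the lemma. One also has to check that the special case $d=1$ causes no trouble, since there the ladder is a single rung adjacent to both the endgadget and a blow-up graph.

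Uniqueness of $g',h'$ is immediate, because distinct Cayley edges give disjoint chains and the chain containing $\pi(\mathfrak{C}_{[d,g,h]}(0))$ is unique. The final clause holds because $\mathfrak{C}_{[d,g',h']}$ is by construction a chain of $D_{G,S}$, which means $(g',h')$ is an edge of colour $d$ in $\Cay_{G,S}$.
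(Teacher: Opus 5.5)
Your proof is correct and follows the same route as the paper: you rigidify the endgadget by vertex types starting from the triangle $\{0,1,3\}$ (with vertex $1$ singled out by its type $(3,6,7)$), propagate along adjacencies, and then walk the ladder backwards, using that ladder and connector vertices have different types to force $d=d'$. Your version is more explicit than the paper's, which calls the identification of all endgadget vertices and the end of the ladder induction easy; you separate $0$ from $3$ via the distinct types of their outer neighbours $2$ and $5$, and you close the induction by noting that a ladder vertex can never be matched with a connector vertex.
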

\begin{proof}
    It suffices to prove the above statement only for the endgadget of the $d$-chain. The proposed equality for the remaining vertices of the $d$-chain $\mathfrak{C}_{[d,g,h]}$ then follows by exploiting the incidence structure of the graph $D_{G,S}$. Clearly, the endgadget of $\mathfrak{C}_{[d,g,h]}$ must be mapped setwise to another endgadget. Hence, there exist $g',h'\in G$ and $d'\in [k]$ such that
    \[ \pi(\mathfrak{C}_{[d,g,h]}(j)) = \mathfrak{C}_{[d^{\prime}, g^{\prime}, h^{\prime}]} (j^{\prime}) \]
    for every $j \in \{0, \ldots, 9\}$, where $j^{\prime} \in \{0, \ldots, 9\}$.
    We already know that the vertices $\mathfrak{C}_{[d,g,h]}(0)$, $\mathfrak{C}_{[d,g,h]}(1)$ and $ \mathfrak{C}_{[d,g,h]}(3)$ must be mapped setwise to $\{\mathfrak{C}_{[d',g',h']}(i)\mid i=0,1,3\}$. Since the type of $ \mathfrak{C}_{[d,g,h]}(1)$ differs from the types of $ \mathfrak{C}_{[d,g,h]}(0)$ and $ \mathfrak{C}_{[d,g,h]}(3)$, see \cref{table:types}, we deduce that $\pi( \mathfrak{C}_{[d,g,h]}(1))= \mathfrak{C}_{[d',g',h']}(1)$. 
    The two vertices $\mathfrak{C}_{[d,g,h]}(6)$ and $\mathfrak{C}_{[d,g,h]}(9)$ are the only vertices of the endgadget connected to vertices of a blow-up graph. Since the types of the blow-up vertices and the types of the chain vertices differ, we know that these two vertices are mapped setwise to one another. However, $\mathfrak{C}_{[d,g,h]}(6)$ has a neighbour of type $(3,6,7)$, namely the vertex $\mathfrak{C}_{[d,g,h]}(1)$, while $\mathfrak{C}_{[d,g,h]}(9)$ has no neighbour of this type. Thus, we know that
    \begin{equation}\label{eq1}
        \pi(\mathfrak{C}_{[d,g,h]}(6)) = \mathfrak{C}_{[d^{\prime},g^{\prime},h^{\prime}]}(6) \textnormal{ and } \pi(\mathfrak{C}_{[d,g,h]}(9)) = \mathfrak{C}_{[d^{\prime},g^{\prime},h^{\prime}]}(9)
    \end{equation}
    for some $d^{\prime} \in [k]$ and $g^{\prime}, h^{\prime} \in G$.
    Since $\mathfrak{C}_{[d,g,h]}(9)$ is mapped onto $\mathfrak{C}_{[d',g',h']}(9)$ under $\pi$, we know that $\pi(\mathfrak{C}_{[d,g,h]}(8))=\mathfrak{C}_{[d',g',h']}(8)$. It now follows easily that all gadget vertices are determined, that is
    \[ \pi(\mathfrak{C}_{[d,g,h]}(j)) = \mathfrak{C}_{[d^{\prime},g^{\prime},h^{\prime}]}(j) \]
    for every $j \in \{0, \ldots, 9\}$ and some fixed $d^{\prime} \in [k]$ and $ g', h' \in G$.

It remains to show that $d = d^{\prime}$.
    Because of \cref{eq1}, we conclude that 
    \[  \pi(\mathfrak{C}_{[d,g,h]}(u_d)) =  \mathfrak{C}_{[d^{\prime},g^{\prime},h^{\prime}]}(u_d) \textnormal{ and } \pi(\mathfrak{C}_{[d,g,h]}(\ell_d)) =  \mathfrak{C}_{[d^{\prime},g^{\prime},h^{\prime}]}(\ell_d).
    \]
    By induction on the length of the $d$-chain, the above equations follow for all upper and lower chain vertices. If $d \neq d^{\prime}$, this chain of equations would fail at some point. Thus, $d = d^{\prime}$ and therefore $g^{\prime}$ and $h^{\prime}$ are unique.
\end{proof}
Now, we show a similar uniqueness result on the permutations of the blow-up graphs. We again limit the statement to the case $k \geq 3$. The other two cases can be proven analogously.
\begin{lemma}\label{lemmaDNGPermus}
    Let $g \in G$ and $A_g$ be the blow-up graph of $g$, and $\pi \in \Aut(D_{G,S})$. Then, there is a unique $g' \in G$ such that 
    \[  \pi( a_g^{d.i}) = a_{g'}^{d.i} \textnormal{ and } \pi(c_g^{d.j}) = c_{g'}^{d.j} \]
    for all $d \in [k], i \in [4]$ and $j \in [2]$.
\end{lemma}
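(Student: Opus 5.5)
The natural route is to pin down $\pi$ on the connector vertices using \Cref{lemmaDChainsPermus}, and then let the incidence structure of $A_g$ carry the rest. The key point is that every connector vertex $a_g^{d.i}$ has exactly one neighbour outside $V(A_G)$, and that neighbour is a chain vertex with a fixed label. Specifically, $a_g^{d.1}$ is adjacent to $\mathfrak{C}_{[d,g,gs_d]}(u_1)$ and $a_g^{d.2}$ to $\mathfrak{C}_{[d,g,gs_d]}(\ell_1)$. With $h'=gs_d^{-1}$, the vertex $a_g^{d.3}$ is adjacent to $\mathfrak{C}_{[d,h',g]}(9)$ and $a_g^{d.4}$ to $\mathfrak{C}_{[d,h',g]}(6)$.

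First step: fix $d$ and $i$. Since $\pi(V(A_G))=V(A_G)$ and $\pi(V(\mathfrak{C}_G))=V(\mathfrak{C}_G)$, the image $\pi(a_g^{d.i})$ is a blow-up vertex adjacent to $\pi$ of that chain vertex. By \Cref{lemmaDChainsPermus}, $\pi(\mathfrak{C}_{[d,g,gs_d]}(u_1))=\mathfrak{C}_{[d,g_1,g_1s_d]}(u_1)$ for some $g_1\in G$. The unique blow-up neighbour of this vertex is $a_{g_1}^{d.1}$, so $\pi(a_g^{d.1})=a_{g_1}^{d.1}$. The same chain gives $\pi(a_g^{d.2})=a_{g_1}^{d.2}$. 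Likewise, for the incoming chain of colour $d$ there is some $g_2$ with $\pi(a_g^{d.3})=a_{g_2}^{d.3}$ and $\pi(a_g^{d.4})=a_{g_2}^{d.4}$.

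Second step: show that all these elements coincide. This is where the work lies, because a priori $g'$ may depend on $d$ and on incoming versus outgoing. I use that $\pi$ maps $A_g$ into a single blow-up graph. Each $A_g$ is connected, and the subgraph of $D_{G,S}$ induced on $V(A_G)$ is exactly the disjoint union of the $A_g$, since every edge between blow-up vertices lies in $E_1$. Because $\pi$ preserves $V(A_G)$, it preserves that induced subgraph, so it maps each connected component $A_g$ onto some component $A_{g'}$. Hence all the $g_1,g_2$ obtained for the various $d$ equal one common $g'$, and $\pi(a_g^{d.i})=a_{g'}^{d.i}$ for all $d,i$. Uniqueness of $g'$ is clear, since the vertices $a_{g'}^{d.i}$ determine $g'$.

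Third step: the center vertices. The vertex $c_g^{d.1}$ is the unique common neighbour of $a_g^{d.2}$ and $a_g^{d.3}$; both have degree three and their third neighbours are, respectively, $a_g^{d.1}$ and a chain vertex, and $a_g^{d.4}$ and a chain vertex. Applying $\pi$ then gives $\pi(c_g^{d.1})=c_{g'}^{d.1}$. Finally, $c_g^{d.2}$ is the remaining neighbour of $c_g^{d.1}$ besides $a_g^{d.2}$ and $a_g^{d.3}$, so $\pi(c_g^{d.2})=c_{g'}^{d.2}$.

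The only delicate point is the second step. It relies on $V(A_G)$ being $\pi$-invariant, which the paper has already established via the neighbour count, and on each $A_g$ being a connected component of the induced subgraph.
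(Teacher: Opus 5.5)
Your proof is correct and follows essentially the paper's route: \Cref{lemmaDChainsPermus} forces each connector vertex $a_g^{d.i}$ onto a connector vertex with the same label $d.i$. Your version is more complete than the paper's terse proof, because you justify two points the paper only asserts: that all these images lie in one common $A_{g'}$ (each $A_g$ is a connected component of the subgraph induced on $V(A_G)$), and that the center vertices $c_g^{d.1}, c_g^{d.2}$ then follow by adjacency.
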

\begin{proof}
    Since center vertices have no $d$-chain neighbours, we see that every center vertex is mapped onto another center vertex and every connector vertex is mapped onto a connector vertex under $\pi$. Hence, we have
    \[  \pi(a_g^{d.i}) = a_{g'}^{d^{\prime}.i^{\prime}} \textnormal{ and } \pi(c_g^{d.i}) = c_{{g'}^{\prime}}^{d^{\prime \prime}.i^{\prime \prime}}. \]

    By \cref{lemmaDChainsPermus} we know that if a $d$-chain $\mathfrak{C}_{[d,g,h]}$ is mapped to another $d$-chain $\mathfrak{C}_{[d, g^{\prime}, h^{\prime}]}$, it respects the position of every vertex between both chains. Therefore, the vertex $a_g^{d.i}$ can only be sent to the vertex $a_{g'}^{d.i}$. The reasoning applies for both the incoming and outgoing $d$-chains.
    Thus, since all connector vertices are fixed within a blow-up graph, and likewise all center vertices, $g'$ is unique.
\end{proof}
Given \cref{lemmaDChainsPermus,lemmaDNGPermus} for the $d$-chains and the blow-up graphs, we say that $d$-chains and blow-up graphs  are mapped ``as a whole'' to $d$-chains and blow-up graphs.
Finally, we have the following lemma.
\begin{lemma}\label{lemmaAutDLeftAction}
    Let $\pi \in \Aut(D_{G,S})$. If the blow-up graph of $1_G$ is mapped as a whole to the blow-up graph $A_h$ for $h \in G$, then, for every $g \in G$, $\pi$ maps the blow-up graph $A_g$ of $g$ as a whole to the blow-up graph $A_{hg}$.
\end{lemma}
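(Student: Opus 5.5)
By \cref{lemmaDNGPermus}, $\pi$ induces a permutation $\sigma$ of $G$ defined by $\pi(A_g)=A_{\sigma(g)}$. Here $\pi(a_g^{d.i})=a_{\sigma(g)}^{d.i}$ and $\pi(c_g^{d.j})=c_{\sigma(g)}^{d.j}$. The plan is to show that $\sigma$ is an edge-colour- and orientation-preserving automorphism of $\Cay_{G,S}$. Since $\Aut(\Cay_{G,S})=\mathcal{L}(G)$ and $\sigma(1_G)=h$, this forces $\sigma=\pi_h$, i.e.\ $\sigma(g)=hg$ for all $g\in G$.

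To check that $\sigma$ preserves coloured directed edges, take an edge $(g,gs_d)$ of colour $d$. In $D_{G,S}$ the vertex $a_g^{d.1}$ is adjacent to $\mathfrak{C}_{[d,g,gs_d]}(u_1)$, and $a_{gs_d}^{d.4}$ is adjacent to $\mathfrak{C}_{[d,g,gs_d]}(6)$. By \cref{lemmaDChainsPermus}, $\pi$ maps this chain to some chain $\mathfrak{C}_{[d,g',h']}$ of the same colour $d$, preserving vertex labels. Hence $\pi(\mathfrak{C}_{[d,g,gs_d]}(u_1))=\mathfrak{C}_{[d,g',h']}(u_1)$. This vertex must be adjacent to $\pi(a_g^{d.1})=a_{\sigma(g)}^{d.1}$. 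The only blow-up neighbour of $\mathfrak{C}_{[d,g',h']}(u_1)$ is $a_{g'}^{d.1}$, by construction of $E_3$, so $g'=\sigma(g)$. In the same way, $\pi(\mathfrak{C}_{[d,g,gs_d]}(6))=\mathfrak{C}_{[d,g',h']}(6)$ is adjacent to $\pi(a_{gs_d}^{d.4})=a_{\sigma(gs_d)}^{d.4}$. Its only blow-up neighbour is $a_{h'}^{d.4}$, by construction of $E_4$, so $h'=\sigma(gs_d)$.

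Since $\mathfrak{C}_{[d,g',h']}$ is a chain of $D_{G,S}$, the pair $(g',h')$ is a colour-$d$ edge of $\Cay_{G,S}$, so $h'=g's_d$. This gives $\sigma(gs_d)=\sigma(g)s_d$ for all $g\in G$ and $d\in[k]$.

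From $\sigma(gs_d)=\sigma(g)s_d$ I will conclude directly, without even invoking the description of $\Aut(\Cay_{G,S})$. Write any $g$ as a word $s_{d_1}\cdots s_{d_m}$ in the generators; positive words suffice because $G$ is finite. Induction on $m$ starting from $\sigma(1_G)=h$ gives $\sigma(g)=h\,s_{d_1}\cdots s_{d_m}=hg$. Thus $\pi$ maps $A_g$ as a whole to $A_{hg}$.

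The main obstacle is the identification step: I must ensure the chain image is pinned down by its attachment vertices. This rests on \cref{lemmaDChainsPermus} giving label preservation, and on the fact that $u_1$ and $6$ each have exactly one neighbour outside the chain, lying in a specific blow-up graph. I expect this to be routine once those lemmas are granted.
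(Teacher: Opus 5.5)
Your proof is correct and follows essentially the same route as the paper: you derive the local step $\sigma(gs_d)=\sigma(g)s_d$ from \cref{lemmaDChainsPermus,lemmaDNGPermus} and then induct on the length of a word in the generators, starting from $\sigma(1_G)=h$. Your version is more explicit than the paper's, since you pin down the endpoints of the image chain via the unique blow-up neighbours of $u_1$ and $6$, and you note that positive words suffice because $G$ is finite.
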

\begin{proof}
The proof follows from examining the $d$-chains in $D_{G,S}.$
  We know that $\pi(A_{1_G}) = A_h$. Since $1_Gs_d = s_d$ for all $d \in [k]$, we have a $d$-chain $\mathfrak{C}_{[d,1_G, s_d]}$ for all $d \in [k]$. \cref{lemmaDChainsPermus} implies that for all those outgoing $d$-chains of $A_{1_G}$, we have
  \[ \pi(\mathfrak{C}_{[d,1_G, s_d]}) = \mathfrak{C}_{[d, h, f]}, \]
  where $f = hs_d$. Now, \cref{lemmaDChainsPermus,lemmaDNGPermus} imply that $\pi(A_{s_d}) = A_{hs_d}$ for all $d \in [k]$. Moreover, if $\pi(A_g) = A_f$ for some $f \in G$, we know that $\pi(A_{gs_d}) = A_{fs_d}$ for every generator $s_d \in S$. This again is an immediate consequence of \cref{lemmaDChainsPermus,lemmaDNGPermus}. Given these two facts, we can now argue our claim for all $g \in G$ by induction on the length of a word in the generators yielding the element $g \in G$. Let $g = s_{i_1} \cdot \; \cdots \; \cdot \; s_{i_r}$, and consider a $d$-chain $\mathfrak{C}_{[d,g,gs_d]}$. Then,
  \begin{align*}
      \pi(\mathfrak{C}_{[d,g,gs_d]}) & = \pi(\mathfrak{C}_{[d,(s_{i_1} \cdot \;  \cdots \; \cdot \; s_{i_r}),(s_{i_1} \cdot \;  \cdots \; \cdot \; s_{i_r})s_d]}) \\
      &= \pi(\mathfrak{C}_{[d,(s_{i_1} \cdot \;  \cdots \; \cdot \; s_{i_{r-1}}) s_{i_r},(s_{i_1} \cdot \;  \cdots \; \cdot \; s_{i_{r-1}} ) s_{i_r}s_d]}) \\
      & = \ldots \\
      & = \pi(\mathfrak{C}_{[d,s_{i_1} (s_{i_2} \cdot \;  \cdots \; \cdot \; s_{i_r}), s_{i_1} (s_{i_2} \cdot \;  \cdots \; \cdot \;s_{i_r}s_d)]}) \\
      & = \mathfrak{C}_{[d,hs_{i_1} (s_{i_2} \cdot \; \cdots \; \cdot \; s_{i_r}),hs_{i_1} (s_{i_2} \cdot \; \cdots \; \cdot \; s_{i_r}) s_d]} \\
      & = \mathfrak{C}_{[d,hg,hgs_d]},
  \end{align*}
  and the claim is proven by invoking \cref{lemmaDChainsPermus,lemmaDNGPermus}.
\end{proof}

Finally, we confirm that every automorphism of $D_{G,S}$ is already induced by an automorphism of the initial Cayley graph.
\begin{proof}[Proof of \cref{lemmaAutDAutCayley}]
    \cref{lemmaAutDLeftAction} implies that every blow-up graph is mapped as a whole to another blow-up graph according to the left multiplication with one fixed element $h \in G$.
    \cref{lemmaDChainsPermus,lemmaDNGPermus} now imply that every incoming and outgoing $d$-chain of a blow-up graph $A_g$ for $g \in G$ is mapped to the incoming or outgoing $d$-chain of $A_{hg}$.
    Therefore, by contracting every blow-up graph $A_g$ for $g \in G$ to a single vertex $g$, and by reducing all $d$-chains to single, directed edges of colour $d \in [k]$, we obtain a unique automorphism $\pi \in \Aut(\Cay_{G,S}).$
\end{proof}

Clearly, the map from \cref{lemmaAutCayleyAutD} sending automorphisms of the edge-coloured Cayley graph to automorphisms of $D_{G,S}$ factorizes over composition of maps. Thus, we have a group homomorphism from $\Aut(\Cay_{G,S})$ to $\Aut(D_{G,S})$. \cref{lemmaAutCayleyAutD,lemmaAutDAutCayley} imply the one-to-one correspondence between the two automorphism groups. Thus, we can finally ensure that the automorphism group of the graph $D_{G,S}$ is isomorphic to the initial group $G$. 

\begin{theorem}\label{theoremAutGroupD}
    Let $G$ be a group and $S$ a generating set for $G$. Then, we have $\Aut(D_{G,S}) \cong G$.
\end{theorem}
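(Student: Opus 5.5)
The plan is to assemble the theorem from \cref{lemmaAutCayleyAutD,lemmaAutDAutCayley} together with the known identification $\Aut(\Cay_{G,S}) = \mathcal{L}(G) \cong G$. Concretely, I will define the map
\[ \Phi : \mathcal{L}(G) \longrightarrow \Aut(D_{G,S}), \quad \pi_g \longmapsto \tilde{\pi}_g, \]
where $\tilde{\pi}_g$ is the automorphism constructed in the proof of \cref{lemmaAutCayleyAutD}. Explicitly, $\tilde{\pi}_g$ sends $a_h^{d.i}\mapsto a_{gh}^{d.i}$, $c_h^{d.j}\mapsto c_{gh}^{d.j}$ and $\mathfrak{C}_{[d,h,hs_d]}(v)\mapsto \mathfrak{C}_{[d,gh,ghs_d]}(v)$. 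I will then show that $\Phi$ is an isomorphism of groups, which gives $\Aut(D_{G,S})\cong \mathcal{L}(G)\cong G$.

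\textbf{Homomorphism.} Applying the defining formula twice shows $\tilde{\pi}_{g}\circ\tilde{\pi}_{g'} = \tilde{\pi}_{gg'}$ on every vertex type. This is a one-line check for each of the three cases (connector vertices, center vertices, chain vertices).

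\textbf{Injectivity.} If $\tilde{\pi}_g$ is the identity, then $a_{1_G}^{1.1}\mapsto a_{g}^{1.1}$ forces $g=1_G$. This is because the vertex sets of different blow-up graphs are disjoint by \cref{defGraphD}.

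\textbf{Surjectivity.} Let $\pi\in\Aut(D_{G,S})$ be arbitrary. By \cref{lemmaDNGPermus} there is a unique $h$ with $\pi(A_{1_G})=A_h$ as a whole. By \cref{lemmaAutDLeftAction}, $\pi(A_g)=A_{hg}$ as a whole for all $g$, and \cref{lemmaDNGPermus} makes this label-preserving on the vertices $a_g^{d.i}, c_g^{d.j}$. Hence $\pi$ agrees with $\tilde{\pi}_h$ on $V(A_G)$. For the chain vertices, \cref{lemmaDChainsPermus} gives $\pi(\mathfrak{C}_{[d,g,gs_d]}(v))=\mathfrak{C}_{[d,g',h']}(v)$. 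The chain is attached to $A_g$ at $u_1,\ell_1$ and to $A_{gs_d}$ at $6,9$, so the images must be attached to $A_{hg}$ and $A_{hgs_d}$ respectively, via the edges in $E_3,E_4$. This forces $g'=hg$ and $h'=hgs_d$. Thus $\pi=\tilde{\pi}_h$ on all of $V_1\cup V_2$, i.e.\ $\pi\in\operatorname{im}\Phi$. This is essentially \cref{lemmaAutDAutCayley}, sharpened to equality of maps rather than mere ``inducing''.

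\textbf{Main obstacle.} The only delicate point is this last identification: that an automorphism agreeing with $\tilde{\pi}_h$ on the blow-up graphs also agrees on all chain vertices. The risk is that two distinct $d$-chains could join the same ordered pair of blow-up graphs. That cannot happen, because the Cayley graph is simple in each colour: the edge $(g,gs_d)$ is determined by $g$ and $d$. Together with the labelled correspondence in \cref{lemmaDChainsPermus}, this closes the gap. Finally, for $k\in\{1,2\}$ I will note that the same argument runs verbatim after omitting the absent center vertices, as the paper already remarks.
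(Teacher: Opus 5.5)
Your proposal is correct and follows essentially the same route as the paper: the map $g\mapsto\tilde{\pi}_g$ from \cref{lemmaAutCayleyAutD} is a homomorphism $G\cong\mathcal{L}(G)\to\Aut(D_{G,S})$, and \cref{lemmaAutDAutCayley}, built on \cref{lemmaDChainsPermus,lemmaDNGPermus,lemmaAutDLeftAction}, gives bijectivity. The only difference is that you spell out injectivity and the equality $\pi=\tilde{\pi}_h$ on the chain vertices, which the paper compresses into its contraction argument and the phrase ``one-to-one correspondence''.
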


\section{Constructing a Cycle Double Cover}\label{sec1CutCDC}
In this section, we construct a cycle double cover (CDC) of the 3-connected cubic graph $D_{G,S}$ that induces a polyhedral map. 
In particular, we construct a set of cycles $\mathcal{Z}$ in $D_{G,S}$ such that any two distinct cycles intersect in at most one edge. 
For this, we present three types of cycles $\mathcal{Z}_i, \mathcal{Z}_r$ and $\mathcal{Z}_b$ such that $\mathcal{Z}=\mathcal{Z}_i\cup\mathcal{Z}_r\cup\mathcal{Z}_b$ is satisfied.
Cycles in $\mathcal{Z}_i$ will be referred to as \textbf{inner face cycles}, those in $\mathcal{Z}_r$ as \textbf{roof edge cycles}, and cycles in $\mathcal{Z}_b$ as \textbf{base edge cycles}. 
Throughout this section, let $G$ be a finite group together with a generating set $S=\{s_1, \ldots, s_k\}$ and an ordering $s_1 < \cdots < s_k$ of its generators. Based on the choice of generators and their ordering, we consider the graph $D_{G,S}$ from \cref{defGraphD}.

We recommend the reader to thoroughly inspect the figures in this section, in particular \cref{fig:InteriorCyclesAg,fig:cyclesD10,fig:cdcUpdated,fig:cyclesD10Updated,fig:cdcUpdated2,fig:cyclesD10Updated1}, which illustrate the cycles and their intersections of our CDC $\mathcal{Z}$. In fact, these illustrations precisely reflect our implementation of the cycles in \cite{gitBA}. We prove the correctness and completeness of our constructed CDC in \cref{secProperties}.

\subsection{The Inner Face Cycles}
First, we construct the inner face cycles. 
Instead of providing a formal definition of the cycles, we describe the cycles and illustrate them graphically in \cref{fig:InteriorCyclesAg,fig:cyclesD10}.
Note that for all $g,h\in G$ and every colour $d \in [k]$, both the $d$-chains $\mathfrak{C}_{[d,g,h]}$ and the blow-up graphs $A_g$ are planar. 
Nevertheless, the graph construction introduced in the previous section does not necessarily yield a planar graph, even when the underlying Cayley graph is planar.
Since both the $d$‑chains and the blow-up graphs admit planar embeddings, we define the boundaries of the inner faces of these planar embeddings as the cycles of $\mathcal{Z}_i$.
It is straightforward to verify that any two cycles in $\mathcal{Z}_i$ intersect in at most one edge. In addition, every edge in $D_{G,S}$ is contained in at least one cycle of $\mathcal{Z}_i$.
Note that the exterior green faces delimited by the dashed edges in \cref{fig:InteriorCyclesAg} correspond to the green faces in \cref{fig:cyclesD10} between the $d$-chains and the blow-up graphs. 
The thick light blue edges in these figures are the edges that are already contained in two cycles of $\mathcal{Z}_i$. 

Note that the term ``colour'' is used in two distinct ways: it refers both to the $k$ colours assigned to the generators $s_1, \ldots, s_k$, and to the visual colours used to highlight faces or edge sets in our cycle double cover. 
In this section, these visual colours for the faces or edges are entirely unrelated to the generator colours. We always use the integers $\{1, \ldots, k\}$ for the edge colours of the underlying Cayley graph, and visible colours solely to membership in the sets $\mathcal{Z}_i$.

\begin{figure}[ht]
\centering
\begin{subfigure}{0.4\textwidth}
    \centering
    \resizebox{5cm}{!}{\begin{tikzpicture}[
    node distance = 3mm and 3mm,
         V/.style = {circle, draw, fill=gray!30},
    every edge quotes/.style = {auto, font=\footnotesize, sloped}
                        ]

                        \pgfdeclarelayer{background}
                        \pgfsetlayers{background,main}

\tikzstyle{overbrace text style}=[font=\large, above, pos=.5, yshift=4mm]
\tikzstyle{underbrace text style}=[font=\large, below, pos=.5, yshift=4mm]
\tikzstyle{overbrace style}=[decorate,decoration={brace,raise=3mm,amplitude=3pt}]
\tikzstyle{underbrace style}=[decorate,decoration={brace,raise=3mm,amplitude=3pt,mirror}]

\coordinate (1) at (3,1);
\coordinate (2) at (4.5, 1);
\coordinate (3) at (7.5,1);
\coordinate (4) at (9,1);

\path [fill=green!30 ] (1) rectangle (4.5, 2.7);
\path [fill=green!30 ] (3) rectangle (9, 2.7);

\begin{scope}[nodes=V]

\node[circle, draw] (7) at (3,1)   {$a_g^{1.1}$};
\node[circle, draw] (8) at (4.5,1)   {$a_g^{1.2}$};
\node[circle, draw] (11) at (7.5,1)   {$a_g^{1.3}$};
\node[circle, draw] (12) at (9,1)   {$a_g^{1.4}$};

\end{scope}

\begin{pgfonlayer}{background}
\filldraw[orange!30]
        (7.center) -- (8.center) -- (11.center) -- (12.center)  -- (12.south west) to[bend left = 70] (7.south east) -- cycle;
\end{pgfonlayer}

\draw

        (7) edge[dashed] (3, 2.7)
        (8) edge[dashed] (4.5, 2.7)
        (11) edge[dashed] (7.5, 2.7)
        (12) edge[dashed] (9, 2.7)

        (7) edge[ultra thick, cyan] (8)
        (8) edge[] (11)
        (11) edge[ultra thick, cyan] (12)
        (7.south east) edge[bend right = 70] (12.south west);

\draw [draw = none] (3, 1.5) -- (4.5, 1.5) node [overbrace text style, font=\small] {$\mathfrak{C}_{[1,g,h_1]}$};

\draw [draw = none] (7.5, 1.5) -- (9, 1.5) node [overbrace text style, font=\small] {$\mathfrak{C}_{[1,h_1^{\prime}, g]}$};

        \end{tikzpicture}}
    \caption{}
    \label{fig:cyclesD1gen}
\end{subfigure}
\hfill
\begin{subfigure}[b]{0.4\textwidth}
    \centering
    \resizebox{5cm}{!}{\begin{tikzpicture}[
    node distance = 3mm and 3mm,
         V/.style = {circle, draw, fill=gray!30},
    every edge quotes/.style = {auto, font=\footnotesize, sloped}
                        ]

\tikzstyle{overbrace text style}=[font=\large, above, pos=.5, yshift=4mm]
\tikzstyle{underbrace text style}=[font=\large, below, pos=.5, yshift=4mm]
\tikzstyle{overbrace style}=[decorate,decoration={brace,raise=3mm,amplitude=3pt}]
\tikzstyle{underbrace style}=[decorate,decoration={brace,raise=3mm,amplitude=3pt,mirror}]

\coordinate (1) at (9, -1);
\coordinate (2) at (7.5, -1);
\coordinate (3) at (6, -1);
\coordinate (4) at (4.5 -1);
\coordinate (5) at (3, -1);
\coordinate (10) at (9, 1);
\coordinate (9) at (7.5, 1);
\coordinate (8) at (6, 1);
\coordinate (7) at (4.5, 1);
\coordinate (6) at (3, 1);

\path [fill=green!30 ] (1) -- (2) -- (7.5, -2.7) -- (9,-2.7);
\path [fill=green!30 ] (5) rectangle (4.5, -2.7);
\path [fill=green!30 ] (6) rectangle (4.5, 2.7);
\path [fill=green!30 ] (9) rectangle (9, 2.7);

\draw[fill=orange!30] (3)--(8)--(7)--(6)--(5);
\draw[fill=purple!30] (1)--(2)--(3)--(8)--(9)--(10);

\begin{scope}[nodes=V]

\node[circle, draw] (19) at (9,-1)   {$a_g^{2.1}$};
\node[circle, draw] (20) at (7.5,-1)   {$a_g^{2.2}$};
\node[shape = diamond, draw] (21) at (6,-1)   {$c_g^{2.1}$};
\node[circle, draw] (23) at (4.5,-1)   {$a_g^{2.3}$};
\node[circle, draw] (24) at (3,-1)   {$a_g^{2.4}$};

\node[circle, draw] (7) at (3,1)   {$a_g^{1.1}$};
\node[circle, draw] (8) at (4.5,1)   {$a_g^{1.2}$};
\node[shape = diamond, draw] (9) at (6,1)   {$c_g^{1.1}$};
\node[circle, draw] (11) at (7.5,1)   {$a_g^{1.3}$};
\node[circle, draw] (12) at (9,1)   {$a_g^{1.4}$};

\end{scope}

\draw

        (7) edge[dashed] (3, 2.7)
        (8) edge[dashed] (4.5, 2.7)
        (11) edge[dashed] (7.5, 2.7)
        (12) edge[dashed] (9, 2.7)
        (9) edge[ultra thick, cyan] (21)
        (12) edge (19)
        (24) edge (7)

        (24) edge[dashed] (3, -2.7)
        (23) edge[dashed] (4.5, -2.7)
        (20) edge[dashed] (7.5, -2.7)
        (19) edge[dashed] (9, -2.7)

        (7) edge[ultra thick, cyan] (8)
        (8) edge[] (9)
        (9) edge[] (11)
        (11) edge[ultra thick, cyan] (12)

        (19) edge[ultra thick, cyan] (20)
        (20) edge[] (21)
        (21) edge[] (23)
        (23) edge[ultra thick, cyan] (24);

\draw [draw = none] (3, 1.5) -- (4.5, 1.5) node [overbrace text style, font=\small] {$\mathfrak{C}_{[1,g,h_1]}$};
\draw [draw = none] (3, -2.3) -- (4.5, -2.3) node [underbrace text style, font=\small] {$\mathfrak{C}_{[2,h_2^{\prime},g]}$};

\draw [draw = none] (7.5, 1.5) -- (9, 1.5) node [overbrace text style, font=\small] {$\mathfrak{C}_{[1,h_1^{\prime},g]}$};
\draw [draw = none] (7.5, -2.3) -- (9, -2.3) node [underbrace text style, font=\small] {$\mathfrak{C}_{[2, g,h_2]}$};

\end{tikzpicture}}
     \caption{}
    \label{fig:agCycles2Gens_inner}
\end{subfigure}
\begin{subfigure}{0.99\textwidth}
    \centering
    \resizebox{11cm}{!}{\begin{tikzpicture}[
    node distance = 3mm and 3mm,
         V/.style = {circle, draw, fill=gray!30},
    every edge quotes/.style = {auto, font=\footnotesize, sloped}
                        ]

\tikzstyle{overbrace text style}=[font=\large, above, pos=.5, yshift=4mm]
\tikzstyle{underbrace text style}=[font=\large, below, pos=.5]
\tikzstyle{overbrace style}=[decorate,decoration={brace,raise=3mm,amplitude=3pt}]

\coordinate (1) at (0,0);
\coordinate (2) at (0, -1.5);
\coordinate (3) at (0, -3);
\coordinate (4) at (2, -3);
\coordinate (5) at (0, -4.5);
\coordinate (6) at (0, -6);

\coordinate (7) at (3,2);
\coordinate (8) at (4.5, 2);
\coordinate (9) at (6, 2);
\coordinate (10) at (6, 0);
\coordinate (11) at (7.5, 2);
\coordinate (12) at (9, 2);

\coordinate (13) at (12, 0);
\coordinate (14) at (12, -1.5);
\coordinate (15) at (12, -3);
\coordinate (16) at (10, -3);
\coordinate (17) at (12, -4.5);
\coordinate (18) at (12, -6);

\coordinate (19) at (9, -8);
\coordinate (20) at (7.5, -8);
\coordinate (21) at (6, -8);
\coordinate (22) at (6, -6);
\coordinate (23) at (4.5, -8);
\coordinate (24) at (3, -8);

\draw[fill=orange!30] (1)--(2)--(3)--(4)--(10)--(9)--(8)--(7);


\path [fill=green!30 ] (1) -- (2) -- (-1.7, -1.5) -- (-1.7,0);
\path [fill=green!30 ] (5) -- (6) -- (-1.7, -6) -- (-1.7,-4.5);
\path [fill=green!30 ] (7) -- (8) -- (4.5, 3.7) -- (3,3.7);
\path [fill=green!30 ] (11) -- (12) -- (9, 3.7) -- (7.5,3.7);
\path [fill=green!30 ] (23) -- (24) -- (3, -9.7) -- (4.5,-9.7);
\path [fill=green!30 ] (19) -- (20) -- (7.5, -9.7) -- (9,-9.7);
\path [fill=green!30 ] (13) -- (14) -- (13.7, -1.5) -- (13.7,0);
\path [fill=green!30 ] (17) -- (18) -- (13.7, -6) -- (13.7,-4.5);

\draw[fill=yellow!30] (9)--(10)--(16)--(15)--(14)--(13)--(12)--(11);

\draw[fill=purple!30] (15)--(16)--(22)--(21)--(20)--(19)--(18)--(17);

\draw[fill=red!30] (21)--(22)--(4)--(3)--(5)--(6)--(24)--(23);

\draw[fill=green!30] (4)--(10)--(16)--(22);

\begin{scope}[nodes=V]

\node[circle, draw] (13) at (12,0)   {$a_g^{2.1}$};
\node[circle, draw] (14) at (12,-1.5)   {$a_g^{2.2}$};
\node[shape = diamond, draw] (15) at (12,-3)   {$c_g^{2.1}$};
\node[shape = diamond, draw] (16) at (10,-3)   {$c_g^{2.2}$};
\node[circle, draw] (17) at (12, -4.5)   {$a_g^{2.3}$};
\node[circle, draw] (18) at (12,-6)   {$a_g^{2.4}$};

\node[circle, draw] (19) at (9,-8)   {$a_g^{3.1}$};
\node[circle, draw] (20) at (7.5,-8)   {$a_g^{3.2}$};
\node[shape = diamond, draw] (21) at (6,-8)   {$c_g^{3.1}$};
\node[shape = diamond, draw] (22) at (6,-6)   {$c_g^{3.2}$};
\node[circle, draw] (23) at (4.5,-8)   {$a_g^{3.3}$};
\node[circle, draw] (24) at (3,-8)   {$a_g^{3.4}$};

\node[circle, draw] (1) at (0,0)   {$a_g^{4.4}$};
\node[circle, draw] (2) at (0,-1.5)   {$a_g^{4.3}$};
\node[shape = diamond, draw] (3) at (0,-3)   {$c_g^{4.1}$};
\node[shape = diamond, draw] (4) at (2,-3)   {$c_g^{4.2}$};
\node[circle, draw] (5) at (0,-4.5)   {$a_g^{4.2}$};
\node[circle, draw] (6) at (0,-6)   {$a_g^{4.1}$};

\node[circle, draw] (7) at (3,2)   {$a_g^{1.1}$};
\node[circle, draw] (8) at (4.5,2)   {$a_g^{1.2}$};
\node[shape = diamond, draw] (9) at (6,2)   {$c_g^{1.1}$};
\node[shape = diamond, draw] (10) at (6,0)   {$c_g^{1.2}$};
\node[circle, draw] (11) at (7.5,2)   {$a_g^{1.3}$};
\node[circle, draw] (12) at (9,2)   {$a_g^{1.4}$};

\end{scope}

\draw

        (7) edge[dashed] (3, 3.7)
        (8) edge[dashed] (4.5, 3.7)
        (11) edge[dashed] (7.5, 3.7)
        (12) edge[dashed] (9, 3.7)

        (24) edge[dashed] (3, -9.7)
        (23) edge[dashed] (4.5, -9.7)
        (20) edge[dashed] (7.5, -9.7)
        (19) edge[dashed] (9, -9.7)

        (1) edge[dashed] (-1.7, 0)
        (2) edge[dashed] (-1.7, -1.5)
        (5) edge[dashed] (-1.7, -4.5)
        (6) edge[dashed] (-1.7, -6) 

        (13) edge[dashed] (13.7, 0)
        (14) edge[dashed] (13.7, -1.5)
        (17) edge[dashed] (13.7, -4.5)
        (18) edge[dashed] (13.7, -6) 

        (1) edge[ultra thick, cyan] (2)
        (2) edge[] (3)
        (3) edge[ultra thick, cyan] (4)
        (3) edge[] (5)
        (5) edge[ultra thick, cyan] (6)

        (4) edge[ultra thick, cyan] (10)
        (10) edge[ultra thick, cyan] (16)
        (16) edge[ultra thick, cyan] (22)
        (22) edge[ultra thick, cyan] (4)

        (1) edge[] (7)
        (12) edge[] (13)
        (18) edge[] (19)
        (24) edge[] (6)

        (7) edge[ultra thick, cyan] (8)
        (8) edge[] (9)
        (9) edge[ultra thick, cyan] (10)
        (9) edge[] (11)
        (11) edge[ultra thick, cyan] (12)

        (13) edge[ultra thick, cyan] (14)
        (14) edge[] (15)
        (15) edge[ultra thick, cyan] (16)
        (15) edge[] (17)
        (17) edge[ultra thick, cyan] (18)

        (19) edge[ultra thick, cyan] (20)
        (20) edge[] (21)
        (21) edge[ultra thick, cyan] (22)
        (21) edge[] (23)
        (23) edge[ultra thick, cyan] (24);

        \draw [draw = none] (3, 2.5) -- (4.5, 2.5) node [overbrace text style, font=\small] {$\mathfrak{C}_{[1,g,h_1]}$};
\draw [draw = none] (3, -9) -- (4.5, -9) node [underbrace text style, font=\small] {$\mathfrak{C}_{[3,h_3^{\prime}, g]}$};

\draw [draw = none] (7.5, 2.5) -- (9, 2.5) node [overbrace text style, font=\small] {$\mathfrak{C}_{[1,h_1^{\prime}, g]}$};
\draw [draw = none] (7.5, -9) -- (9, -9) node [underbrace text style, font=\small] {$\mathfrak{C}_{[3,g,h_3]}$};

\draw [draw = none] (-1.5, -6) -- (-0.5, -6) node [overbrace text style, font=\small] {$\mathfrak{C}_{[4,g,h_4]}$};
\draw [draw = none] (13.5, -6) -- (12.5, -6) node [overbrace text style, font=\small] {$\mathfrak{C}_{[2,h_2^{\prime}, g]}$};

\draw [draw = none] (-1.5, -1.5) -- (-0.5, -1.5) node [overbrace text style, font=\small] {$\mathfrak{C}_{[4,h_4^{\prime}, g]}$};
\draw [draw = none] (13.5, -1.5) -- (12.5, -1.5) node [overbrace text style, font=\small] {$\mathfrak{C}_{[2,g,h_2]}$};

        \end{tikzpicture}}
     \caption{}
    \label{fig:agCycles3Gens_inner}
\end{subfigure}
\caption{The inner face cycles of $A_g$ for $k=1$ (a), $k=2$ (b) and $k=4$ (c).}
\label{fig:InteriorCyclesAg}
\end{figure}

\begin{figure}[ht!]
    \centering
    \resizebox{9cm}{!}{
    \begin{tikzpicture}[
    node distance = 3mm and 3mm,
         V/.style = {circle, draw, fill=gray!30},
    every edge quotes/.style = {auto, font=\footnotesize, sloped}
                        ]

\tikzstyle{overbrace text style}=[font=\large, above, pos=.5, yshift=4mm]
\tikzstyle{underbrace text style}=[font=\large, below, pos=.5, yshift=-4mm]
\tikzstyle{overbrace style}=[decorate,decoration={brace,raise=3mm,amplitude=3pt}]
\tikzstyle{underbrace style}=[decorate,decoration={brace,raise=3mm,amplitude=3pt,mirror}]

\coordinate (1) at (0,0);
\coordinate (2) at (4,0);
\coordinate (3) at (4, -1.5);
\coordinate (4) at (0, -1.5);

\coordinate (5) at (7, 0);
\coordinate (6) at (7, -1.5);
\coordinate (5a) at (8.1, 0);
\coordinate (6a) at (8.1, -1.5);

\coordinate (7) at (0, -4.5);
\coordinate (8) at (0, -6);
\coordinate (9) at (4, -4.5);
\coordinate (10) at (4, -6);

\coordinate (11) at (5.5, -4.5);
\coordinate (12) at (5.5, -7);
\coordinate (13) at (4, -8);

\coordinate (a) at (8.1, -4.5);
\coordinate (b) at (8.1, -6);

\coordinate (14) at (7, -8);

\coordinate (15) at (7, -4.5);
\coordinate (16) at (7, -6);

\filldraw[red!30] (15)--(a)--(b)--(16);
\filldraw[green!30] (5)--(5a)--(6a)--(3);
\filldraw[green!30] (1)--(2)--(3)--(4);
\filldraw[red!30] (2)--(5)--(6)--(3);
\filldraw[green!30] (7)--(9)--(10)--(8);
\filldraw[brown!30] (9)--(10)--(13)--(12)--(11);
\filldraw[purple!30] (11)--(12)--(14)--(16)--(15);
\filldraw[orange!30] (12)--(13)--(14);

\filldraw[green!15] (7, -7)--(5.5, -6)--(5.5, -7)-- (7, -8);

\begin{scope}[nodes=V]

\node (a) at (4, 0)    {$u_1$};
\node (b) at (4, -1.5)          {$\ell_1$};

\node (c) at (7, 0) {$u_2$};
\node (d) at (7, -1.5) {$\ell_2$};


\node (a8) at (4, -4.5) {9};
\node (a5) at (4, -6) {6};
\node (a7) at (5.5, -4.5) {8};
\node (a6) at (7, -4.5) {7};
\node (a4) at (7, -6) {4};
\node (a3) at (5.5, -7) {3};
\node (a2) at (4, -8) {1};
\node (a1) at (7, -8) {$0$};
\node (a0) at (7, -7) {$2$};
\node (a9) at (5.5, -6) {$5$};

\node[circle, draw] (1) at (0,0)   {$a_g^{d.1}$};
\node[circle, draw] (2) at (0,-1.5)   {$a_g^{d.2}$};
\node[shape = diamond] (3) at (0,-3)   {$c_g^{d.1}$};
\node[shape = diamond] (4) at (-2,-3)   {$c_g^{d.2}$};
\node[circle, draw] (5) at (0,-4.5)   {$a_g^{d.3}$};
\node[circle] (6) at (0,-6)   {$a_g^{d.4}$};

\node(27) at (-1, 1.5) {$a_g^{(d-1).4}$};
\node (28) at (-1, -7.5) {$a_g^{(d+1).1}$};

\node[shape = diamond] (25) at (-3, -1.5) {$c_g^{(d-1).2}$};
\node[shape = diamond] (26) at (-3, -4.5) {$c_g^{(d+1).2}$};

\end{scope}

\draw   

        (1) edge (a)
        (2) edge (b)

        (a) edge (c)
        (b) edge (d)
        (c) edge[ultra thick, cyan] (d)
        (a) edge[ultra thick, cyan] (b)

        (c) edge[dashed] (8.2,0)
        (d) edge[dashed] (8.2,-1.5)

        (a4) edge[dashed] (8.1, -6)
        (a6) edge[dashed] (8.1, -4.5)

        (5) edge (a8)
        (6) edge (a5)
        
        (27) edge[dashed, ultra thick, cyan] (-2, 2.5)
        (27) edge[dashed] (0, 2.5)

        (28) edge[dashed, ultra thick, cyan] (-2, -8.5)
        (28) edge[dashed] (0, -8.5)

        (25) edge[dashed, ultra thick, cyan] (-4,0)
        (25) edge[dashed, ultra thick, cyan] (-2, 0)

        (26) edge[dashed, ultra thick, cyan] (-4, -6)
        (26) edge[dashed, ultra thick, cyan] (-2, -6)

        (1) edge [] (27)
        (6) edge[] (28)

        (4) edge[ultra thick, cyan] (25)
        (4) edge[ultra thick, cyan] (26)

        (1) edge[ultra thick, cyan] (2)
        (2) edge (3)
        (3) edge[ultra thick, cyan] (4)
        (3) edge (5)
        (5) edge[ultra thick, cyan] (6)

        (a1) edge (a2)
        (a1) edge[ultra thick, cyan] (a3)
        (a1) edge (a0)
        (a4) edge (a0)
        (a2) edge (a5)
        (a2) edge[ultra thick, cyan] (a3)
        (a3) edge[ultra thick, cyan] (a9)
        (a9) edge[ultra thick, cyan] (a7)

        (a9) edge[ultra thick, cyan] (a0)
        
        (a4) edge[ultra thick, cyan] (a6)
        (a5) edge[ultra thick, cyan] (a8)
        (a7) edge (a6)
        (a7) edge (a8);

\draw [overbrace style] (3.5, 0.5) -- (8.3, 0.5) node [overbrace text style] {The chain $\mathfrak{C}_{[d,g,h]}$};

\draw [underbrace style] (3.5, -8.5) -- (8.3, -8.5) node [underbrace text style] {The chain $\mathfrak{C}_{[d,h^{\prime},g]}$};

\end{tikzpicture}
    }
    \caption{The inner face cycles in a $d$-chain.}
    \label{fig:cyclesD10}
\end{figure}

\subsection{The Roof Edge Cycles}\label{subsect:roofcycles}
In this section, we describe a set that covers all roof edges of the gadgets, which motivates its name. The following edges lie in exactly one cycle of $\mathcal{Z}_i$ and in exactly one cycle of $\mathcal{Z}_r$, the latter set being constructed in this subsection:
\begin{enumerate}
    \item the \textbf{upper chain edges}, that are the edges $\left\{\mathfrak{C}_{[d,g,h]}(u_j) , \mathfrak{C}_{[d,g,h]}(u_{j+1}) \right\}$,
    \item the \textbf{upper endgadget edges}, that are the edges $\left\{ \mathfrak{C}_{[d,g,h]}(4), \mathfrak{C}_{[d,g,h]}(2) \right\}$, $\left\{ \mathfrak{C}_{[d,g,h]}(2), \mathfrak{C}_{[d,g,h]}(0) \right\}$, $\left\{ \mathfrak{C}_{[d,g,h]}(0), \mathfrak{C}_{[d,g,h]}(1) \right\}$ and $\left\{ \mathfrak{C}_{[d,g,h]}(1), \mathfrak{C}_{[d,g,h]}(6) \right\}$,
    \item the \textbf{$s_d$-block-connecting edges}, that are the edges of the form $\left\{ {a_g}^{d.4}, {a_g}^{(d+1 \modd k).1} \right\}$, 
    \item the \textbf{upper connection edges}, that are the edges of the form $\left\{ a_g^{d.1} , \mathfrak{C}_{[d,g,h]}(u_1) \right\}$ and \\ $\left\{ \mathfrak{C}_{[d,g,h]}(6), a_h^{d.4} \right\}$.
\end{enumerate}

Each red edge in \cref{fig:cdcUpdated,fig:cyclesD10Updated} is covered by one cycle in $\mathcal{Z}_i$ and now also by a second cycle in $\mathcal{Z}_r$.
The light blue ones are the edges that are already covered by two cycles in the set $\mathcal{Z}_i$, and the thin black edges are the ones that still require a second cycle.

Firstly, we introduce some notation. For an edge $(g,h)\in \Cay_{G,S}$ we refer to the unique upper-level path from $a_g^{d.1}$ to $a_h^{d.4}$ that traverses only the upper chain edges, the upper endgadget edges in the chain $\mathfrak{C}_{[d,g,h]}$ and the upper connection edges as the \textbf{upper path}.

To compute a roof edge cycle for a given starting point we proceed as follows:
We start with an element $g \in G$ and the generator $s_1 \in S$. We use the upper path in the chain $\mathfrak{C}_{[1,g,h_1]}$ with $h_1 = gs_1$. Within $A_{h_1}$, we traverse the edge $\left\{a_{h_1}^{1.4}, a_{h_1}^{2.1}\right\}$ to enter its $s_2$-block. 
Then, we take the upper path from $A_{h_1}$ to $A_{h_2}$ with $h_2 = h_1 s_2$. We iterate the procedure of entering $A_h$ through a generator $s_d$ and immediately exiting it via the generator $s_{(d+1 \modd k)}$ along its upper path, continuing this process until a vertex is encountered for the second time. Whenever we enter $A_h$ via the last generator $s_k$, we save the element $h \in G$ in the set $H_{\textnormal{vis}}$ of visited elements of $G$. This process terminates, since we assume $G$ to be finite.
In summary, we compute one roof edge cycle, as well as the set $H_{\textnormal{vis}}$ of elements in $G$ whose blow-up graphs we entered via the generator $s_k$.

Now, we compute the set of all roof edge cycles.
We start with the full group $G$ and an \textbf{update set} $\tilde{G}$ initially set to $G$. In the set $\tilde{G}$, we keep track of the elements in $G$ which have yet to be visited via the generator $s_k$ in a roof edge cycle. First, we compute one roof edge cycle with the neutral element $1_G$ as the starting point, and we remove all those elements from $\tilde{G}$ that we have just visited via the $k$-th generator, i.e.\ all elements in the set $H_{\textnormal{vis}}$.
Iteratively, we compute another roof edge cycle by using a remaining element in the set $\tilde{G}$ as the starting point. We repeat this as long as $\tilde{G}$ is not empty. Afterwards, all roof edge cycles $\mathcal{Z}_r$ are computed.

To formally prove the correctness and completeness of the roof edge cycles, we ought to prove the following statements. We only state these results without proof, as their verification is straightforward. Additional insight into the correctness of our cycles will be provided in Section~\ref{secProperties}. 
The corresponding properties, adjusted to the edges specified in point 3, likewise apply to the set $\mathcal{Z}_b$ of base edge cycles.

\begin{remark}\label{lemmaCompleteness1}
    The following properties are satisfied for the roof edge cycles in $\mathcal{Z}_r$ computed as described above:
    \begin{enumerate}
        \item No two distinct cycles of $\mathcal{Z}_r$ intersect.
        \item Every roof edge cycle in $\mathcal{Z}_r$ intersects in at most one edge with any inner face cycle from the set $\mathcal{Z}_i$. 
        \item Every upper chain edge, every upper endgadget edge, every $s_d$-block-connecting edge, as well as every connection edge in the graph $D_{G,S}$ is contained in exactly one cycle of $\mathcal{Z}_r$.
        Moreover, the cycles in $\mathcal{Z}_r$ only contain precisely those edges.
    \end{enumerate}    
\end{remark}

\begin{figure}[ht]
    \centering
    \begin{subfigure}[b]{0.4\textwidth}
        \centering
        \resizebox{5cm}{!}{\begin{tikzpicture}[
    node distance = 3mm and 3mm,
         V/.style = {circle, draw, fill=gray!30},
    every edge quotes/.style = {auto, font=\footnotesize, sloped}
                        ]

\tikzstyle{overbrace text style}=[font=\large, above, pos=.5, yshift=4mm]
\tikzstyle{underbrace text style}=[font=\large, below, pos=.5, yshift=4mm]
\tikzstyle{overbrace style}=[decorate,decoration={brace,raise=3mm,amplitude=3pt}]
\tikzstyle{underbrace style}=[decorate,decoration={brace,raise=3mm,amplitude=3pt,mirror}]

\coordinate (1) at (3,1);
\coordinate (2) at (4.5, 1);
\coordinate (3) at (7.5,1);
\coordinate (4) at (9,1);

\begin{scope}[nodes=V]

\node[circle, draw] (7) at (3,1)   {$a_g^{1.1}$};
\node[circle, draw] (8) at (4.5,1)   {$a_g^{1.2}$};
\node[circle, draw] (11) at (7.5,1)   {$a_g^{1.3}$};
\node[circle, draw] (12) at (9,1)   {$a_g^{1.4}$};

\end{scope}

\draw

        (7) edge[dashed, ultra thick, red] (3, 2.7)
        (8) edge[dashed] (4.5, 2.7)
        (11) edge[dashed] (7.5, 2.7)
        (12) edge[dashed, ultra thick, red] (9, 2.7)

        (7) edge[ultra thick, cyan] (8)
        (8) edge[] (11)
        (11) edge[ultra thick, cyan] (12)
        (7) edge[bend right = 70, ultra thick, red] (12);

\draw [draw = none] (3, 1.5) -- (4.5, 1.5) node [overbrace text style, font=\small] {$\mathfrak{C}_{[1,g,h_1]}$};

\draw [draw = none] (7.5, 1.5) -- (9, 1.5) node [overbrace text style, font=\small] {$\mathfrak{C}_{[1,h_1^{\prime}, g]}$};

        \end{tikzpicture}}
        \caption{$k=1$}
        \label{fig:cyclesD1genUpdate}
    \end{subfigure}
    \hfill
    \begin{subfigure}[b]{0.4\textwidth}
        \centering
        \resizebox{5cm}{!}{
        \begin{tikzpicture}[
    node distance = 3mm and 3mm,
         V/.style = {circle, draw, fill=gray!30},
    every edge quotes/.style = {auto, font=\footnotesize, sloped}
                        ]

\tikzstyle{overbrace text style}=[font=\large, above, pos=.5, yshift=4mm]
\tikzstyle{underbrace text style}=[font=\large, below, pos=.5, yshift=4mm]
\tikzstyle{overbrace style}=[decorate,decoration={brace,raise=3mm,amplitude=3pt}]
\tikzstyle{underbrace style}=[decorate,decoration={brace,raise=3mm,amplitude=3pt,mirror}]

\begin{scope}[nodes=V]

\node[circle, draw] (19) at (9,-1)   {$a_g^{2.1}$};
\node[circle, draw] (20) at (7.5,-1)   {$a_g^{2.2}$};
\node[shape = diamond, draw] (21) at (6,-1)   {$c_g^{2.1}$};
\node[circle, draw] (23) at (4.5,-1)   {$a_g^{2.3}$};
\node[circle, draw] (24) at (3,-1)   {$a_g^{2.4}$};

\node[circle, draw] (7) at (3,1)   {$a_g^{1.1}$};
\node[circle, draw] (8) at (4.5,1)   {$a_g^{1.2}$};
\node[shape = diamond, draw] (9) at (6,1)   {$c_g^{1.1}$};
\node[circle, draw] (11) at (7.5,1)   {$a_g^{1.3}$};
\node[circle, draw] (12) at (9,1)   {$a_g^{1.4}$};

\end{scope}

\draw

        (7) edge[dashed, ultra thick, red] (3, 2.5)
        (8) edge[dashed] (4.5, 2.5)
        (11) edge[dashed] (7.5, 2.5)
        (12) edge[dashed, ultra thick, red] (9, 2.5)
        (9) edge[ultra thick, cyan] (21)
        (12) edge[ultra thick, red] (19)
        (24) edge[ultra thick, red] (7)

        (24) edge[dashed, ultra thick, red] (3, -2.5)
        (23) edge[dashed] (4.5, -2.5)
        (20) edge[dashed] (7.5, -2.5)
        (19) edge[dashed, ultra thick, red] (9, -2.5)

        (7) edge[ultra thick, cyan] (8)
        (8) edge[] (9)
        (9) edge[] (11)
        (11) edge[ultra thick, cyan] (12)

        (19) edge[ultra thick, cyan] (20)
        (20) edge[] (21)
        (21) edge[] (23)
        (23) edge[ultra thick, cyan] (24);

\draw [draw = none] (3, 1.5) -- (4.5, 1.5) node [overbrace text style, font=\small] {$\mathfrak{C}_{[1,g,h_1]}$};
\draw [draw = none] (3, -2.3) -- (4.5, -2.3) node [underbrace text style, font=\small] {$\mathfrak{C}_{[2,h_2^{\prime}, g]}$};

\draw [draw = none] (7.5, 1.5) -- (9, 1.5) node [overbrace text style, font=\small] {$\mathfrak{C}_{[1,h_1^{\prime}, g]}$};
\draw [draw = none] (7.5, -2.3) -- (9, -2.3) node [underbrace text style, font=\small] {$\mathfrak{C}_{[2,g,h_2]}$};

\end{tikzpicture}
        }
        \caption{$k=2$}
        \label{fig:agCycles2Gens}
    \end{subfigure}
    \begin{subfigure}[b]{0.99\textwidth}
        \centering
        \resizebox{12cm}{!}{
        \begin{tikzpicture}[
    node distance = 3mm and 3mm,
         V/.style = {circle, draw, fill=gray!30},
    every edge quotes/.style = {auto, font=\footnotesize, sloped}
                        ]

\tikzstyle{overbrace text style}=[font=\large, above, pos=.5, yshift=4mm]
\tikzstyle{underbrace text style}=[font=\large, below, pos=.5]
\tikzstyle{overbrace style}=[decorate,decoration={brace,raise=3mm,amplitude=3pt}]

\begin{scope}[nodes=V]

\node[circle, draw] (13) at (12,0)   {$a_g^{2.1}$};
\node[circle, draw] (14) at (12,-1.5)   {$a_g^{2.2}$};
\node[shape = diamond, draw] (15) at (12,-3)   {$c_g^{2.1}$};
\node[shape = diamond, draw] (16) at (10,-3)   {$c_g^{2.2}$};
\node[circle, draw] (17) at (12, -4.5)   {$a_g^{2.3}$};
\node[circle, draw] (18) at (12,-6)   {$a_g^{2.4}$};

\node[circle, draw] (19) at (9,-8)   {$a_g^{3.1}$};
\node[circle, draw] (20) at (7.5,-8)   {$a_g^{3.2}$};
\node[shape = diamond, draw] (21) at (6,-8)   {$c_g^{3.1}$};
\node[shape = diamond, draw] (22) at (6,-6)   {$c_g^{3.2}$};
\node[circle, draw] (23) at (4.5,-8)   {$a_g^{3.3}$};
\node[circle, draw] (24) at (3,-8)   {$a_g^{3.4}$};

\node[circle, draw] (1) at (0,0)   {$a_g^{d.4}$};
\node[circle, draw] (2) at (0,-1.5)   {$a_g^{4.3}$};
\node[shape = diamond, draw] (3) at (0,-3)   {$c_g^{4.1}$};
\node[shape = diamond, draw] (4) at (2,-3)   {$c_g^{4.2}$};
\node[circle, draw] (5) at (0,-4.5)   {$a_g^{4.2}$};
\node[circle, draw] (6) at (0,-6)   {$a_g^{4.1}$};

\node[circle, draw] (7) at (3,2)   {$a_g^{1.1}$};
\node[circle, draw] (8) at (4.5,2)   {$a_g^{1.2}$};
\node[shape = diamond, draw] (9) at (6,2)   {$c_g^{1.1}$};
\node[shape = diamond, draw] (10) at (6,0)   {$c_g^{1.2}$};
\node[circle, draw] (11) at (7.5,2)   {$a_g^{1.3}$};
\node[circle, draw] (12) at (9,2)   {$a_g^{1.4}$};

\end{scope}

\draw

        (7) edge[dashed, ultra thick, red] (3, 3.5)
        (8) edge[dashed] (4.5, 3.5)
        (11) edge[dashed] (7.5, 3.5)
        (12) edge[dashed, ultra thick,red] (9, 3.5)

        (24) edge[dashed, ultra thick, red] (3, -9.5)
        (23) edge[dashed] (4.5, -9.5)
        (20) edge[dashed] (7.5, -9.5)
        (19) edge[dashed, ultra thick, red] (9, -9.5)

        (1) edge[dashed, ultra thick, red] (-1.5, 0)
        (2) edge[dashed] (-1.5, -1.5)
        (5) edge[dashed] (-1.5, -4.5)
        (6) edge[dashed, ultra thick, red] (-1.5, -6) 

        (13) edge[dashed, ultra thick, red] (13.5, 0)
        (14) edge[dashed] (13.5, -1.5)
        (17) edge[dashed] (13.5, -4.5)
        (18) edge[dashed, ultra thick, red] (13.5, -6) 

        (1) edge[ultra thick, cyan] (2)
        (2) edge[] (3)
        (3) edge[ultra thick, cyan] (4)
        (3) edge[] (5)
        (5) edge[ultra thick, cyan] (6)

        (4) edge[ultra thick, cyan] (10)
        (10) edge[ultra thick, cyan] (16)
        (16) edge[ultra thick, cyan] (22)
        (22) edge[ultra thick, cyan] (4)

        (1) edge[ultra thick, red] (7)
        (12) edge[ultra thick, red] (13)
        (18) edge[ultra thick, red] (19)
        (24) edge[ultra thick, red] (6)

        (7) edge[ultra thick, cyan] (8)
        (8) edge[] (9)
        (9) edge[ultra thick, cyan] (10)
        (9) edge[] (11)
        (11) edge[ultra thick, cyan] (12)

        (13) edge[ultra thick, cyan] (14)
        (14) edge[] (15)
        (15) edge[ultra thick, cyan] (16)
        (15) edge[] (17)
        (17) edge[ultra thick, cyan] (18)

        (19) edge[ultra thick, cyan] (20)
        (20) edge[] (21)
        (21) edge[ultra thick, cyan] (22)
        (21) edge[] (23)
        (23) edge[ultra thick, cyan] (24);

        \draw [draw = none] (3, 2.5) -- (4.5, 2.5) node [overbrace text style, font=\small] {$\mathfrak{C}_{[1,g,h_1]}$};
        \draw [draw = none] (3, -9) -- (4.5, -9) node [underbrace text style, font=\small] {$\mathfrak{C}_{[3,h_3^{\prime}, g]}$};
        
        \draw [draw = none] (7.5, 2.5) -- (9, 2.5) node [overbrace text style, font=\small] {$\mathfrak{C}_{[1,h_1^{\prime}, g]}$};
        \draw [draw = none] (7.5, -9) -- (9, -9) node [underbrace text style, font=\small] {$\mathfrak{C}_{[3,g,h_3]}$};
        
        \draw [draw = none] (-1.5, -6) -- (-0.5, -6) node [overbrace text style, font=\small] {$\mathfrak{C}_{[4,g,h_4]}$};
        \draw [draw = none] (13.5, -6) -- (12.5, -6) node [overbrace text style, font=\small] {$\mathfrak{C}_{[2,h_2^{\prime}, g]}$};
        
        \draw [draw = none] (-1.5, -1.5) -- (-0.5, -1.5) node [overbrace text style, font=\small] {$\mathfrak{C}_{[4,h_4^{\prime}, g]}$};
        \draw [draw = none] (13.5, -1.5) -- (12.5, -1.5) node [overbrace text style, font=\small] {$\mathfrak{C}_{[2,g,h_2]}$};

        \end{tikzpicture}
        }
        \caption{$k=4$}
    \end{subfigure}
\caption{\centering The roof edge cycles (red) in the blow-up graphs $A_g$ for various $k$.}
\label{fig:cdcUpdated}
\end{figure}

\begin{figure}[ht!]
        \centering
        \resizebox{10cm}{!}{
        \begin{tikzpicture}[
    node distance = 3mm and 3mm,
         V/.style = {circle, draw, fill=gray!30},
    every edge quotes/.style = {auto, font=\footnotesize, sloped}
                        ]

\tikzstyle{overbrace text style}=[font=\large, above, pos=.5, yshift=4mm]
\tikzstyle{underbrace text style}=[font=\large, below, pos=.5, yshift=-4mm]
\tikzstyle{overbrace style}=[decorate,decoration={brace,raise=3mm,amplitude=3pt}]
\tikzstyle{underbrace style}=[decorate,decoration={brace,raise=3mm,amplitude=3pt,mirror}]

\begin{scope}[nodes=V]

\node (a) at (4, 0)    {$u_1$};
\node (b) at (4, -1.5)          {$\ell_1$};

\node (c) at (7, 0) {$u_2$};
\node (d) at (7, -1.5) {$\ell_2$};


\node (a8) at (4, -4.5) {9};
\node (a5) at (4, -6) {6};
\node (a7) at (5.5, -4.5) {8};
\node (a6) at (7, -4.5) {7};
\node (a4) at (7, -6) {4};
\node (a3) at (5.5, -7) {3};
\node (a2) at (4, -8) {1};
\node (a1) at (7, -8) {$0$};
\node (a0) at (7, -7) {$2$};
\node (a9) at (5.5, -6) {$5$};

\node[circle, draw] (1) at (0,0)   {$a_g^{d.1}$};
\node[circle, draw] (2) at (0,-1.5)   {$a_g^{d.2}$};
\node[shape = diamond] (3) at (0,-3)   {$c_g^{d.1}$};
\node[shape = diamond] (4) at (-2,-3)   {$c_g^{d.2}$};
\node[circle, draw] (5) at (0,-4.5)   {$a_g^{d.3}$};
\node[circle] (6) at (0,-6)   {$a_g^{d.4}$};

\node(27) at (-1, 1.5) {$a_g^{(d-1).4}$};
\node (28) at (-1, -7.5) {$a_g^{(d+1).1}$};

\node[shape = diamond] (25) at (-3, -1.5) {$c_g^{(d-1).2}$};
\node[shape = diamond] (26) at (-3, -4.5) {$c_g^{(d+1).2}$};

\end{scope}

\draw   

        (1) edge[ultra thick, red] (a)
        (2) edge (b)

        (a) edge[ultra thick, red] (c)
        (b) edge (d)
        (c) edge[ultra thick, cyan] (d)
        (a) edge[ultra thick, cyan] (b)

        (c) edge[dashed, ultra thick, red] (8.2,0)
        (d) edge[dashed] (8.2,-1.5)

        (a4) edge[dashed, ultra thick, red] (8.1, -6)
        (a6) edge[dashed] (8.1, -4.5)

        (5) edge (a8)
        (6) edge[ultra thick, red] (a5)
        
        (27) edge[dashed, ultra thick, cyan] (-2, 2.5)
        (27) edge[dashed, ultra thick, red] (0, 2.5)

        (28) edge[dashed, ultra thick, cyan] (-2, -8.5)
        (28) edge[dashed, ultra thick, red] (0, -8.5)

        (25) edge[dashed, ultra thick, cyan] (-4,0)
        (25) edge[dashed, ultra thick, cyan] (-2, 0)

        (26) edge[dashed, ultra thick, cyan] (-4, -6)
        (26) edge[dashed, ultra thick, cyan] (-2, -6)

        (1) edge [ultra thick, red] (27)
        (6) edge[ultra thick, red] (28)

        (4) edge[ultra thick, cyan] (25)
        (4) edge[ultra thick, cyan] (26)

        (1) edge[ultra thick, cyan] (2)
        (2) edge (3)
        (3) edge[ultra thick, cyan] (4)
        (3) edge (5)
        (5) edge[ultra thick, cyan] (6)

        (a1) edge[ultra thick, red] (a2)
        (a1) edge[ultra thick, cyan] (a3)
        (a1) edge[ultra thick, red] (a0)
        (a0) edge[ultra thick, red] (a4)
        (a2) edge[ultra thick, red] (a5)
        (a2) edge[ultra thick, cyan] (a3)
        (a3) edge[ultra thick, cyan] (a9)
        (a0) edge[ultra thick, cyan] (a9)
        (a9) edge[ultra thick, cyan] (a7)
        (a4) edge[ultra thick, cyan] (a6)
        (a5) edge[ultra thick, cyan] (a8)
        (a7) edge (a6)
        (a7) edge (a8);

\draw [overbrace style] (3.5, 0.5) -- (8.3, 0.5) node [overbrace text style] {The chain $\mathfrak{C}_{[d,g,h]}$};

\draw [underbrace style] (3.5, -8.5) -- (8.3, -8.5) node [underbrace text style] {The chain $\mathfrak{C}_{[d,h^{\prime},g]}$};

\end{tikzpicture}
        }
        \caption{The roof edge cycles (red) in a $d$-chain.}
        \label{fig:cyclesD10Updated}
\end{figure}

\subsection{The Base Edge Cycles}\label{subsect:basecycles}
In this section, we present a set of cycles that covers all base edges, hence the name. The following edges lie in precisely one cycle of $\mathcal{Z}_i$ and in precisely one cycle of $\mathcal{Z}_b$, the latter being constructed in this subsection:
\begin{enumerate}
    \item the \textbf{lower chain edges}, that are the edges $\left\{ \mathfrak{C}_{[d,g,h]}(\ell_j), \mathfrak{C}_{[d,g,h]}( \ell_{j+1})\right\}$,
    \item the \textbf{lower endgadget edges}, that are the edges $\left\{ \mathfrak{C}_{[d,g,h]}(6), \mathfrak{C}_{[d,g,h]}(7) \right\}$ and \\$\left\{ \mathfrak{C}_{[d,g,h]}(7), \mathfrak{C}_{[d,g,h]}(8) \right\}$,
    \item the \textbf{remaining $s_d$-block edges}, more precisely the edges $\left\{ a_g^{d.2}, c_g^{d.1}\right\}$ and $\left\{ c_g^{d.1}, a_g^{d.3} \right\}$.
    \item the \textbf{base connection edges}, that are the edges of the form $\left\{ a_g^{d.2} , \mathfrak{C}_{[d,g,h]}(\ell_1) \right\}$ and \\ $\left\{ \mathfrak{C}_{[d,g,h]}(9), a_h^{d.3} \right\}$.
\end{enumerate}
Each green edge in \cref{fig:cdcUpdated2,fig:cyclesD10Updated1} is covered by one cycle in $\mathcal{Z}_i$ and now also by a second cycle in $\mathcal{Z}_b$.
The light blue and red edges are the ones that were already covered by two cycles in our previous sets $\mathcal{Z}_i$ and $\mathcal{Z}_r$. 

The case that $G$ is generated by one element ($k=1$) is exceptional, as the graph $D_{G,S}$ does not contain any center vertices. 
Hence, we simply omit traversing the center vertices $c_g^{d.1}$ and only traverse the two connector vertices $a_g^{d.2}$ and $a_g^{d.3}$.
Similarly to the upper path, for an edge $(g,h) \in E(\Cay_{G,S})$ of colour $d$, we define the unique \textbf{lower path} from $a_g^{d.2}$ to $a_h^{d.3}$ that only traverses the lower chain, the lower endgadget edges of the $d$-chain $\mathfrak{C}_{[d,g,h]}$ and the base connection edges.

To compute a base edge cycle for a given starting point we proceed as follows:  We start with an element $g \in G$ and a fixed generator $s_d \in S$. Firstly, we trace along the lower path of the chain $\mathfrak{C}_{[d,g,h_1]}$ with $h_1 = gs_d$.
Having arrived in $A_{h_1}$, we go to the center vertex $c_{h_1}^{d.1}$ of the $s_d$-block in $A_{h_1}$, then to the second connector vertex of the $s_d$-block, $a_{h_1}^{d.2}$, and use the lower path in $\mathfrak{C}_{[d,h_1, h_2]}$ with $h_2 = h_1s_d$. 
Moreover, we add every group element $h$ that we have visited on the path to the set $\bar{H}_{\textnormal{vis}}$.
We repeat this process until we visit a vertex for the second time. 
In summary, we compute one base edge cycle and the set $\bar{H}_{\textnormal{vis}}$ of group elements we visited.

Now, we compute the set of all base edge cycles similarly to before, the only difference being that we define a set $\tilde{G}_d$ for each index of generators $1 \leq d \leq k$ individually. 
For each $d \in [k]$, we proceed similarly as for the roof edge cycles: We take a remaining element $g$ in $\tilde{G}_d$, compute the base edge cycle starting in $g$ and the set $\bar{H}_{\textnormal{vis}}$, remove the elements in $\bar{H}_{\textnormal{vis}}$ from $\tilde{G}_d$, and add the base edge cycle we just computed to $\mathcal{Z}_b$.
The result of this iteration for all $d \in [k]$ is the set $\mathcal{Z}_b$ of all base edge cycles.

\cref{fig:cdcUpdated2,fig:cyclesD10Updated1} illustrate the final cycle double cover of $D_{G,S}$ according to the three sets $\mathcal{Z}_i, \mathcal{Z}_r$ and $\mathcal{Z}_b$ constructed above. As before, the blue edges are those covered twice by face cycles of $\mathcal{Z}_i$. The red edges are covered twice as well, once through $\mathcal{Z}_i$ and once through $\mathcal{Z}_r$. Finally, the green edges are covered once by $\mathcal{Z}_i$ and once by $\mathcal{Z}_b$.

\begin{figure}[ht]
    \begin{subfigure}[b]{0.4\textwidth}
        \centering
        \resizebox{5cm}{!}{\begin{tikzpicture}[
    node distance = 3mm and 3mm,
         V/.style = {circle, draw, fill=gray!30},
    every edge quotes/.style = {auto, font=\footnotesize, sloped}
                        ]

\tikzstyle{overbrace text style}=[font=\large, above, pos=.5, yshift=4mm]
\tikzstyle{underbrace text style}=[font=\large, below, pos=.5, yshift=4mm]
\tikzstyle{overbrace style}=[decorate,decoration={brace,raise=3mm,amplitude=3pt}]
\tikzstyle{underbrace style}=[decorate,decoration={brace,raise=3mm,amplitude=3pt,mirror}]

\coordinate (1) at (3,1);
\coordinate (2) at (4.5, 1);
\coordinate (3) at (7.5,1);
\coordinate (4) at (9,1);

\begin{scope}[nodes=V]

\node[circle, draw] (7) at (3,1)   {$a_g^{1.1}$};
\node[circle, draw] (8) at (4.5,1)   {$a_g^{1.2}$};
\node[circle, draw] (11) at (7.5,1)   {$a_g^{1.3}$};
\node[circle, draw] (12) at (9,1)   {$a_g^{1.4}$};

\end{scope}

\draw

        (7) edge[dashed, ultra thick, red] (3, 2.7)
        (8) edge[dashed, ultra thick, green] (4.5, 2.7)
        (11) edge[dashed, ultra thick, green] (7.5, 2.7)
        (12) edge[dashed, ultra thick, red] (9, 2.7)

        (7) edge[ultra thick, cyan] (8)
        (8) edge[ultra thick, green] (11)
        (11) edge[ultra thick, cyan] (12)
        (7) edge[bend right = 70, ultra thick, red] (12);

\draw [draw = none] (3, 1.5) -- (4.5, 1.5) node [overbrace text style, font=\small] {$\mathfrak{C}_{[1,g,h_1]}$};

\draw [draw = none] (7.5, 1.5) -- (9, 1.5) node [overbrace text style, font=\small] {$\mathfrak{C}_{[1,h_1^{\prime}, g]}$};

        \end{tikzpicture}}
        \caption{$k=1$}
        \label{fig:cyclesD1genUpdate2}
    \end{subfigure}
    \hfill
    \begin{subfigure}[b]{0.4\textwidth}
        \centering
        \resizebox{5cm}{!}{
        \begin{tikzpicture}[
    node distance = 3mm and 3mm,
         V/.style = {circle, draw, fill=gray!30},
    every edge quotes/.style = {auto, font=\footnotesize, sloped}
                        ]

\tikzstyle{overbrace text style}=[font=\large, above, pos=.5, yshift=4mm]
\tikzstyle{underbrace text style}=[font=\large, below, pos=.5, yshift=4mm]
\tikzstyle{overbrace style}=[decorate,decoration={brace,raise=3mm,amplitude=3pt}]
\tikzstyle{underbrace style}=[decorate,decoration={brace,raise=3mm,amplitude=3pt,mirror}]

\begin{scope}[nodes=V]

\node[circle, draw] (19) at (9,-1)   {$a_g^{2.1}$};
\node[circle, draw] (20) at (7.5,-1)   {$a_g^{2.2}$};
\node[shape = diamond, draw] (21) at (6,-1)   {$c_g^{2.1}$};
\node[circle, draw] (23) at (4.5,-1)   {$a_g^{2.3}$};
\node[circle, draw] (24) at (3,-1)   {$a_g^{2.4}$};

\node[circle, draw] (7) at (3,1)   {$a_g^{1.1}$};
\node[circle, draw] (8) at (4.5,1)   {$a_g^{1.2}$};
\node[shape = diamond, draw] (9) at (6,1)   {$c_g^{1.1}$};
\node[circle, draw] (11) at (7.5,1)   {$a_g^{1.3}$};
\node[circle, draw] (12) at (9,1)   {$a_g^{1.4}$};

\end{scope}

\draw

        (7) edge[dashed, ultra thick, red] (3, 2.5)
        (8) edge[dashed, ultra thick, green] (4.5, 2.5)
        (11) edge[dashed, ultra thick, green] (7.5, 2.5)
        (12) edge[dashed, ultra thick, red] (9, 2.5)
        (9) edge[ultra thick, cyan] (21)
        (12) edge[ultra thick, red] (19)
        (24) edge[ultra thick, red] (7)

        (24) edge[dashed, ultra thick, red] (3, -2.5)
        (23) edge[dashed, ultra thick, green] (4.5, -2.5)
        (20) edge[dashed, ultra thick, green] (7.5, -2.5)
        (19) edge[dashed, ultra thick, red] (9, -2.5)

        (7) edge[ultra thick, cyan] (8)
        (8) edge[ultra thick, green] (9)
        (9) edge[ultra thick, green] (11)
        (11) edge[ultra thick, cyan] (12)

        (19) edge[ultra thick, cyan] (20)
        (20) edge[ultra thick, green] (21)
        (21) edge[ultra thick, green] (23)
        (23) edge[ultra thick, cyan] (24);

\draw [draw = none] (3, 1.5) -- (4.5, 1.5) node [overbrace text style, font=\small] {$\mathfrak{C}_{[1,g,h_1]}$};
\draw [draw = none] (3, -2.3) -- (4.5, -2.3) node [underbrace text style, font=\small] {$\mathfrak{C}_{[2,h_2^{\prime}, g]}$};

\draw [draw = none] (7.5, 1.5) -- (9, 1.5) node [overbrace text style, font=\small] {$\mathfrak{C}_{[1,h_1^{\prime}, g]}$};
\draw [draw = none] (7.5, -2.3) -- (9, -2.3) node [underbrace text style, font=\small] {$\mathfrak{C}_{[2,g,h_2]}$};

\end{tikzpicture}
        }
        \caption{$k=2$}
        \label{fig:agCycles2Gens2}
    \end{subfigure}
    \begin{subfigure}[b]{0.99\textwidth}
        \centering
        \resizebox{12cm}{!}{
        \begin{tikzpicture}[
    node distance = 3mm and 3mm,
         V/.style = {circle, draw, fill=gray!30},
    every edge quotes/.style = {auto, font=\footnotesize, sloped}
                        ]

\tikzstyle{overbrace text style}=[font=\large, above, pos=.5, yshift=4mm]
\tikzstyle{underbrace text style}=[font=\large, below, pos=.5]
\tikzstyle{overbrace style}=[decorate,decoration={brace,raise=3mm,amplitude=3pt}]

\begin{scope}[nodes=V]

\node[circle, draw] (13) at (12,0)   {$a_g^{2.1}$};
\node[circle, draw] (14) at (12,-1.5)   {$a_g^{2.2}$};
\node[shape = diamond, draw] (15) at (12,-3)   {$c_g^{3.1}$};
\node[shape = diamond, draw] (16) at (10,-3)   {$c_g^{3.2}$};
\node[circle, draw] (17) at (12, -4.5)   {$a_g^{2.3}$};
\node[circle, draw] (18) at (12,-6)   {$a_g^{2.4}$};

\node[circle, draw] (19) at (9,-8)   {$a_g^{3.1}$};
\node[circle, draw] (20) at (7.5,-8)   {$a_g^{3.2}$};
\node[shape = diamond, draw] (21) at (6,-8)   {$c_g^{2.1}$};
\node[shape = diamond, draw] (22) at (6,-6)   {$c_g^{2.2}$};
\node[circle, draw] (23) at (4.5,-8)   {$a_g^{3.3}$};
\node[circle, draw] (24) at (3,-8)   {$a_g^{3.4}$};

\node[circle, draw] (1) at (0,0)   {$a_g^{4.4}$};
\node[circle, draw] (2) at (0,-1.5)   {$a_g^{4.3}$};
\node[shape = diamond, draw] (3) at (0,-3)   {$c_g^{4.1}$};
\node[shape = diamond, draw] (4) at (2,-3)   {$c_g^{4.2}$};
\node[circle, draw] (5) at (0,-4.5)   {$a_g^{4.2}$};
\node[circle, draw] (6) at (0,-6)   {$a_g^{4.1}$};

\node[circle, draw] (7) at (3,2)   {$a_g^{1.1}$};
\node[circle, draw] (8) at (4.5,2)   {$a_g^{1.2}$};
\node[shape = diamond, draw] (9) at (6,2)   {$c_g^{1.1}$};
\node[shape = diamond, draw] (10) at (6,0)   {$c_g^{1.2}$};
\node[circle, draw] (11) at (7.5,2)   {$a_g^{1.3}$};
\node[circle, draw] (12) at (9,2)   {$a_g^{1.4}$};

\end{scope}

\draw

        (7) edge[dashed, ultra thick, red] (3, 3.5)
        (8) edge[dashed, ultra thick, green] (4.5, 3.5)
        (11) edge[dashed, ultra thick, green] (7.5, 3.5)
        (12) edge[dashed, ultra thick, red] (9, 3.5)

        (24) edge[dashed, ultra thick, red] (3, -9.5)
        (23) edge[dashed, ultra thick, green] (4.5, -9.5)
        (20) edge[dashed, ultra thick, green] (7.5, -9.5)
        (19) edge[dashed, ultra thick, red] (9, -9.5)

        (1) edge[dashed, ultra thick, red] (-1.5, 0)
        (2) edge[dashed, ultra thick, green] (-1.5, -1.5)
        (5) edge[dashed, ultra thick, green] (-1.5, -4.5)
        (6) edge[dashed, ultra thick, red] (-1.5, -6) 

        (13) edge[dashed, ultra thick, red] (13.5, 0)
        (14) edge[dashed, ultra thick, green] (13.5, -1.5)
        (17) edge[dashed, ultra thick, green] (13.5, -4.5)
        (18) edge[dashed, ultra thick, red] (13.5, -6) 

        (1) edge[ultra thick, cyan] (2)
        (2) edge[ultra thick, green] (3)
        (3) edge[ultra thick, cyan] (4)
        (3) edge[ultra thick, green] (5)
        (5) edge[ultra thick, cyan] (6)

        (4) edge[ultra thick, cyan] (10)
        (10) edge[ultra thick, cyan] (16)
        (16) edge[ultra thick, cyan] (22)
        (22) edge[ultra thick, cyan] (4)

        (1) edge[ultra thick, red] (7)
        (12) edge[ultra thick, red] (13)
        (18) edge[ultra thick, red] (19)
        (24) edge[ultra thick, red] (6)

        (7) edge[ultra thick, cyan] (8)
        (8) edge[ultra thick, green] (9)
        (9) edge[ultra thick, cyan] (10)
        (9) edge[ultra thick, green] (11)
        (11) edge[ultra thick, cyan] (12)

        (13) edge[ultra thick, cyan] (14)
        (14) edge[ultra thick, green] (15)
        (15) edge[ultra thick, cyan] (16)
        (15) edge[ultra thick, green] (17)
        (17) edge[ultra thick, cyan] (18)

        (19) edge[ultra thick, cyan] (20)
        (20) edge[ultra thick, green] (21)
        (21) edge[ultra thick, cyan] (22)
        (21) edge[ultra thick, green] (23)
        (23) edge[ultra thick, cyan] (24);

        \draw [draw = none] (3, 2.5) -- (4.5, 2.5) node [overbrace text style, font=\small] {$\mathfrak{C}_{[1,g,h_1]}$};
        \draw [draw = none] (3, -9) -- (4.5, -9) node [underbrace text style, font=\small] {$\mathfrak{C}_{[3,h_3^{\prime}, g]}$};
        
        \draw [draw = none] (7.5, 2.5) -- (9, 2.5) node [overbrace text style, font=\small] {$\mathfrak{C}_{[1,h_1^{\prime}, g]}$};
        \draw [draw = none] (7.5, -9) -- (9, -9) node [underbrace text style, font=\small] {$\mathfrak{C}_{[3,g,h_3]}$};
        
        \draw [draw = none] (-1.5, -6) -- (-0.5, -6) node [overbrace text style, font=\small] {$\mathfrak{C}_{[4,g,h_4]}$};
        \draw [draw = none] (13.5, -6) -- (12.5, -6) node [overbrace text style, font=\small] {$\mathfrak{C}_{[2,h_2^{\prime}, g]}$};
        
        \draw [draw = none] (-1.5, -1.5) -- (-0.5, -1.5) node [overbrace text style, font=\small] {$\mathfrak{C}_{[4,h_4^{\prime}, g]}$};
        \draw [draw = none] (13.5, -1.5) -- (12.5, -1.5) node [overbrace text style, font=\small] {$\mathfrak{C}_{[2,g,h_2]}$};

        \end{tikzpicture}
        }
        \caption{$k=4$}
    \end{subfigure}
    \caption{\centering The base edge cycles (green) in the blow-up graphs $A_g$ for various $k$.}
    \label{fig:cdcUpdated2}
\end{figure}
    
\begin{figure}[ht!]
        \centering
        \resizebox{10cm}{!}{
        \begin{tikzpicture}[
    node distance = 3mm and 3mm,
         V/.style = {circle, draw, fill=gray!30},
    every edge quotes/.style = {auto, font=\footnotesize, sloped}
                        ]

\tikzstyle{overbrace text style}=[font=\large, above, pos=.5, yshift=4mm]
\tikzstyle{underbrace text style}=[font=\large, below, pos=.5, yshift=-4mm]
\tikzstyle{overbrace style}=[decorate,decoration={brace,raise=3mm,amplitude=3pt}]
\tikzstyle{underbrace style}=[decorate,decoration={brace,raise=3mm,amplitude=3pt,mirror}]

\begin{scope}[nodes=V]

\node (a) at (4, 0)    {$u_1$};
\node (b) at (4, -1.5)          {$\ell_1$};

\node (c) at (7, 0) {$u_2$};
\node (d) at (7, -1.5) {$\ell_2$};


\node (a8) at (4, -4.5) {9};
\node (a5) at (4, -6) {6};
\node (a7) at (5.5, -4.5) {8};
\node (a6) at (7, -4.5) {7};
\node (a4) at (7, -6) {4};
\node (a3) at (5.5, -7) {3};
\node (a2) at (4, -8) {1};
\node (a1) at (7, -8) {$0$};
\node (a0) at (7, -7) {$2$};
\node (a9) at (5.5, -6) {$5$};

\node[circle, draw] (1) at (0,0)   {$a_g^{d.1}$};
\node[circle, draw] (2) at (0,-1.5)   {$a_g^{d.2}$};
\node[shape = diamond] (3) at (0,-3)   {$c_g^{d.1}$};
\node[shape = diamond] (4) at (-2,-3)   {$c_g^{d.2}$};
\node[circle, draw] (5) at (0,-4.5)   {$a_g^{d.3}$};
\node[circle] (6) at (0,-6)   {$a_g^{d.4}$};

\node(27) at (-1, 1.5) {$a_g^{(d-1).4}$};
\node (28) at (-1, -7.5) {$a_g^{(d+1).1}$};

\node[shape = diamond] (25) at (-3, -1.5) {$c_g^{(d-1).2}$};
\node[shape = diamond] (26) at (-3, -4.5) {$c_g^{(d+1).2}$};

\end{scope}

\draw   

        (1) edge[ultra thick, red] (a)
        (2) edge[ultra thick, green] (b)

        (a) edge[ultra thick, red] (c)
        (b) edge[ultra thick, green] (d)
        (c) edge[ultra thick, cyan] (d)
        (a) edge[ultra thick, cyan] (b)

        (c) edge[dashed, ultra thick, red] (8.2,0)
        (d) edge[dashed, ultra thick, green] (8.2,-1.5)

        (a4) edge[dashed, ultra thick, red] (8.1, -6)
        (a6) edge[dashed, ultra thick, green] (8.1, -4.5)

        (5) edge[ultra thick, green] (a8)
        (6) edge[ultra thick, red] (a5)
        
        (27) edge[dashed, ultra thick, cyan] (-2, 2.5)
        (27) edge[dashed, ultra thick, red] (0, 2.5)

        (28) edge[dashed, ultra thick, cyan] (-2, -8.5)
        (28) edge[dashed, ultra thick, red] (0, -8.5)

        (25) edge[dashed, ultra thick, cyan] (-4,0)
        (25) edge[dashed, ultra thick, cyan] (-2, 0)

        (26) edge[dashed, ultra thick, cyan] (-4, -6)
        (26) edge[dashed, ultra thick, cyan] (-2, -6)

        (1) edge [ultra thick, red] (27)
        (6) edge[ultra thick, red] (28)

        (4) edge[ultra thick, cyan] (25)
        (4) edge[ultra thick, cyan] (26)

        (1) edge[ultra thick, cyan] (2)
        (2) edge[ultra thick, green] (3)
        (3) edge[ultra thick, cyan] (4)
        (3) edge[ultra thick, green] (5)
        (5) edge[ultra thick, cyan] (6)

        (a0) edge[ultra thick, cyan] (a9)

        (a1) edge[ultra thick, red] (a2)
        (a1) edge[ultra thick, cyan] (a3)
        (a1) edge[ultra thick, red] (a0)
        (a0) edge[ultra thick, red] (a4)
        (a2) edge[ultra thick, red] (a5)
        (a2) edge[ultra thick, cyan] (a3)
        (a3) edge[ultra thick, cyan] (a9)
        (a9) edge[ultra thick, cyan] (a7)
        (a4) edge[ultra thick, cyan] (a6)
        (a5) edge[ultra thick, cyan] (a8)
        (a7) edge[ultra thick, green] (a6)
        (a7) edge[ultra thick, green] (a8);

\draw [overbrace style] (3.5, 0.5) -- (8.3, 0.5) node [overbrace text style] {The chain $\mathfrak{C}_{[d,g,h]}$};

\draw [underbrace style] (3.5, -8.5) -- (8.3, -8.5) node [underbrace text style] {The chain $\mathfrak{C}_{[d,h^{\prime},g]}$};

\end{tikzpicture}
        }
        \caption{The base edge cycles (green) in a $d$-chain.}
        \label{fig:cyclesD10Updated1}
\end{figure}

\section{Properties of the CDC}\label{secProperties}
In this section, we prove that the cycle double cover (CDC) $\mathcal{Z}=\mathcal{Z}_i\cup\mathcal{Z}_r\cup\mathcal{Z}_b$ of $D_{G,S}$ constructed in the previous section induces a polyhedral map. We also examine several combinatorial properties of this CDC. 
The main result is that the constructed CDC, and therefore the induced polyhedral map, is invariant under the action of the automorphism group of $D_{G,S}$.
Again, we consider a group $G$ with a generating set $S = \{s_1, \ldots, s_k\}$ and a fixed ordering on its elements, as well as the associated graph $D_{G,S}$.
Firstly, we consider the connection between the roof and base edge cycles and the powers of elements in $G$.
\begin{lemma}\label{lemmaCyclesPowers}
    Let $s := \prod_{d=1}^k s_d$ be the product of all generators $s_d \in S$ according to our ordering and $g\in G$ the starting point.
    \begin{enumerate}
        \item The set $H_{\textnormal{vis}}$, defined in \Cref{subsect:roofcycles}, computed together with the roof edge cycles is exactly the left coset $g\langle s \rangle$.
        \item The set $\bar{H}_{\textnormal{vis}}$, defined in \Cref{subsect:basecycles}, computed together with the base edge cycles is exactly the left coset $g \langle s_d \rangle$ for every $d \in [k]$.
    \end{enumerate}
\end{lemma}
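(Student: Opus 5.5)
We trace the roof and base edge procedures as walks on group elements and read off which elements are visited.

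\textbf{Part 1.} We follow the roof edge procedure started at $g$. By construction, the upper path of $\mathfrak{C}_{[d,x,xs_d]}$ leads from $a_x^{d.1}$ to $a_{xs_d}^{d.4}$. The $s_d$-block-connecting edge then leads to $a_{xs_d}^{(d+1 \modd k).1}$. So the walk moves through the blow-up graphs
\[ A_g,\; A_{gs_1},\; A_{gs_1s_2},\; \ldots,\; A_{gs_1\cdots s_k}=A_{gs},\; A_{gss_1},\; \ldots \]
We enter a blow-up graph via $s_k$ exactly at the elements $gs^m$ with $m\geq 1$, so $H_{\textnormal{vis}}\subseteq g\langle s\rangle$.

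For the reverse inclusion, note that the next vertex of the walk is determined by the current vertex and the generator index. We show that this map is injective: the vertex $a_x^{d.1}$ has a unique predecessor on the walk, namely $a_{xs_{d-1}^{-1}}^{(d-1).4}$ together with the corresponding upper path. Since $D_{G,S}$ is finite, the walk is therefore a closed cycle that first repeats at its start vertex $a_g^{1.1}$. This happens exactly after $m=\operatorname{ord}(s)$ full rounds: returning to $a_g^{1.1}$ requires being at a colour-$1$ position, i.e. after a full number of rounds, at an element $gs^m=g$. Hence every $gs^m$ with $1\le m\le \operatorname{ord}(s)$ is entered via $s_k$ before the procedure terminates, and $H_{\textnormal{vis}}=g\langle s\rangle$. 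Note that $gs^{\operatorname{ord}(s)}=g$ is included.

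\textbf{Part 2.} The argument is the same with a single colour $d$. The lower path of $\mathfrak{C}_{[d,x,xs_d]}$ goes from $a_x^{d.2}$ to $a_{xs_d}^{d.3}$. The path $a_{xs_d}^{d.3},c_{xs_d}^{d.1},a_{xs_d}^{d.2}$ stays within the $s_d$-block (for $k=1$ we use the edge $\{a^{1.3},a^{1.2}\}$ instead). The visited elements are therefore $g,gs_d,gs_d^2,\ldots$. Again the step is injective on vertices, so the walk closes exactly when $gs_d^m=g$, i.e. at $m=\operatorname{ord}(s_d)$. This gives $\bar H_{\textnormal{vis}}=g\langle s_d\rangle$. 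The starting element $g$ is counted either directly, since it is added when first visited, or as the endpoint $gs_d^{\operatorname{ord}(s_d)}$.

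\textbf{Main obstacle.} The delicate step is showing that the procedure's stopping rule ("until a vertex is encountered for the second time") fires only upon returning to the starting vertex, and not earlier at some intermediate vertex. We handle this through the injectivity of the successor map, i.e. the unique-predecessor argument. It relies on the local structure: inside each chain the upper and lower paths are simple and pairwise disjoint across chains, and each connector vertex lies on exactly one upper or one lower path. Once this is established, both parts reduce to the elementary fact that the orbit of $g$ under right multiplication by $s$ (respectively $s_d$) is the left coset $g\langle s\rangle$ (respectively $g\langle s_d\rangle$).
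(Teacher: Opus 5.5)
Your proposal is correct and follows the paper's route: the roof (resp.\ base) walk projects to cyclic right multiplication by $s_1,\ldots,s_k$ (resp.\ by $s_d$) in $\Cay_{G,S}$, so the recorded elements are the orbit $g\langle s\rangle$ (resp.\ $g\langle s_d\rangle$); your bijectivity argument for the step map, together with the disjointness of the upper and lower paths, usefully makes explicit why the stopping rule fires only on returning to the start vertex, which the paper's sketch leaves implicit. One small index slip: the predecessor of $a_x^{d.1}$ on the walk is $a_x^{(d-1).4}$, reached via the upper path starting at $a_{xs_{d-1}^{-1}}^{(d-1).1}$ (indices read cyclically), not $a_{xs_{d-1}^{-1}}^{(d-1).4}$.
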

\begin{proof}
    The argument for both types of cycles is analogous, so we only sketch the proof for the first set $H_{\textnormal{vis}}$ computed by the roof edge cycles. 
    Let $g \in G$. 
    Recall the procedure for computing the roof edge cycles: When contracting our graph construction $D_{G,S}$ back to the Cayley graph $\Cay_{G,S}$, the path we trace is induced by the right multiplication with every generator $s_1, \ldots, s_k$ in increasing order, restarting with $s_1$ after $s_k$, starting in the element $g$. 
    Also, we save every element we have visited via the last generator $s_k$. 
    Thus, we save precisely those elements $h \in G$ in $H_{\textnormal{vis}}$ such that there is some $p \in \mathbb{N}$ with $g(s_1 \cdot \; \cdots \; \cdot s_k)^p = h$, or, in other words, $h \in g\langle s \rangle$. 
    When lifting this path to our graph construction, we observe that the same property still holds. Hence,  $H_{\textnormal{vis}}=g\langle s\rangle$.
    The argument for the base edge cycles is the same by only considering the multiplication from the right with one fixed generator $s_d \in S$.
\end{proof}

The above lemma implies already that the CDC induces a polyhedral map.
\begin{theorem}\label{theoremZ1CDC}
    Let $G$ be a group with a generating set $S = \{ s_1, \ldots, s_k\}$. Let $\mathcal{Z}$ be defined as $\mathcal{Z} = \mathcal{Z}_i \cup \mathcal{Z}_r \cup \mathcal{Z}_b$, where $\mathcal{Z}_i$ is the set of inner face cycles, $\mathcal{Z}_r$ the set of roof edge cycles, and $\mathcal{Z}_b$ the set of base edge cycles. Then $\mathcal{Z}$ is a CDC that induces a polyhedral map of $D_{G,S}$.
\end{theorem}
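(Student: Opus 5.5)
The proof splits into three parts: every member of $\mathcal{Z}$ is a cycle, every edge of $D_{G,S}$ lies in exactly two members of $\mathcal{Z}$, and two distinct members share at most one edge.

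\emph{Members are cycles.} For $\mathcal{Z}_i$ this holds because these are boundaries of inner faces of planar embeddings of the $2$-connected pieces $A_g$ and $\mathfrak{C}_{[d,g,h]}$. For a roof edge cycle, the traversal is a closed walk built from upper paths and $s_d$-block-connecting edges. It projects to the walk in $\Cay_{G,S}$ induced by right multiplication with $s_1,s_2,\ldots,s_k,s_1,\ldots$. The state (current element, next generator index) evolves by a permutation of the finite set $G\times[k]$, so the walk returns to its starting state. Distinct states use distinct upper paths, which are vertex-disjoint. Hence no vertex repeats before closure, and the walk is a simple cycle. The same argument with the single permutation $h\mapsto hs_d$ shows that base edge cycles are simple. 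By \cref{lemmaCyclesPowers}, the cycle through a starting point $g$ corresponds to the coset $g\langle s\rangle$ (respectively $g\langle s_d\rangle$).

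\emph{Double cover.} I partition $E(D_{G,S})$ into three classes: blue edges, lying in two inner faces; red edges, the roof edges listed in \cref{subsect:roofcycles}; and green edges, the base edges listed in \cref{subsect:basecycles}. Checking against \cref{fig:InteriorCyclesAg,fig:cyclesD10} is a finite local verification, since every edge lies in some $A_g$, some chain, or a connection between them. It shows that blue edges lie in exactly two cycles of $\mathcal{Z}_i$ and red and green edges in exactly one. It then remains to show that each red edge lies in exactly one roof cycle and each green edge in exactly one base cycle, with no other edges used. Containment is by construction. For uniqueness, the roof cycles are exactly the orbits of the permutation above on $G\times[k]$. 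The update-set procedure picks one representative per orbit: \cref{lemmaCyclesPowers} identifies the orbits with cosets $g\langle s\rangle$, and removing $H_{\textnormal{vis}}$ from $\tilde G$ prevents choosing the same coset twice. Every state is reached, so every red edge is covered exactly once. Base cycles are handled identically, using cosets of $\langle s_d\rangle$ separately for each $d$.

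\emph{Polyhedrality.} This is the main obstacle, and I organize it by pairs of types.
\begin{itemize}
\item $\mathcal{Z}_i$ with $\mathcal{Z}_i$: two faces of a planar $3$-connected-like piece meet in at most one edge, which is checked on the figures. Faces from different pieces are edge-disjoint, except for the green connector faces, which are again checked locally.
\item Two cycles of $\mathcal{Z}_r$, or two of $\mathcal{Z}_b$: they are edge-disjoint by the orbit argument.
\item $\mathcal{Z}_r$ with $\mathcal{Z}_b$: red and green edges are disjoint, and these cycles contain no blue edges, so they share no edge.
\item $\mathcal{Z}_i$ with $\mathcal{Z}_r\cup\mathcal{Z}_b$: this is the delicate case. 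An inner face $F$ lies in a single piece, or in a single connector face between a block and a chain. I would list, for each face type, its red and its green edges. For example, the ladder squares contain one upper and one lower chain edge, and the face through $a^{d.4}$, $6$, $1$, $0$, $2$, $4$ contains several upper endgadget edges.
\end{itemize}

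The difficulty in the last case is that consecutive red edges of one face may belong to the same roof cycle, which would give an intersection of two edges. So I must show that whenever a face contains two or more red edges, they lie on different roof cycles, or else the face contains only one red edge per cycle.

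I would try to argue this by locality. Within a chain, the upper path is a single segment, and a face touching it in two edges must do so along consecutive edges of that segment. I would check that no inner face contains two consecutive upper-path edges, by inspecting \cref{fig:cyclesD10Updated}. If this fails for some face, for instance $\{0,1,6\}$-adjacent faces, I would verify directly that the paper's face decomposition avoids the problem, or note that the offending edges appear in the same face only as a single path. Such a path would then violate polyhedrality, which means this verification is where the argument must be made carefully. Finally, I would note that a cubic graph whose CDC meets this condition is $3$-connected, as recalled in the preliminaries.
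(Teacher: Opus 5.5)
\textbf{Gap: the inner-face versus roof/base case is left open.} Your overall route is the paper's. Roof and base cycles are simple and pairwise edge-disjoint because they are orbits of $(g,d)\mapsto(gs_d,d+1)$ and $g\mapsto gs_d$, i.e.\ cosets via \cref{lemmaCyclesPowers}. The double cover then follows from the local blue/red/green classification.

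The gap is the case you flag yourself, $\mathcal{Z}_i$ against $\mathcal{Z}_r\cup\mathcal{Z}_b$, which you leave conditional (``if this fails \ldots''). Worse, the one concrete face you cite, the face ``through $a^{d.4},6,1,0,2,4$'', is not an inner face. Writing $j$ for $\mathfrak{C}_{[d,g,h]}(j)$, the path $4,2,0,1,6,a_h^{d.4}$ is a segment of the upper path, i.e.\ of a roof cycle. If a face contained it, that face would share several edges with a roof cycle and the theorem would be false. So as written you have not determined the face structure on which polyhedrality depends.

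A locality argument about consecutive upper-path edges would also not suffice on its own. One roof cycle can re-enter the same $A_h$ through a different colour, since its orbit in $G\times[k]$ may contain both $(h,d)$ and $(h,d')$. Hence non-consecutive red edges of one roof cycle can lie in the same blow-up graph.

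\textbf{The missing step: a per-face count.} Every inner face contains at most one red edge, and at most one green edge of each generator colour. This is a finite check.

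In $\mathfrak{C}_{[d,g,h]}$ the inner faces are:
\begin{itemize}
\item the squares $(a_g^{d.1},u_1,\ell_1,a_g^{d.2})$, $(u_i,u_{i+1},\ell_{i+1},\ell_i)$ and $(u_d,4,7,\ell_d)$;
\item the triangle $(0,1,3)$, the quadrilateral $(0,3,5,2)$, the pentagon $(2,5,8,7,4)$ and the hexagon $(1,3,5,8,9,6)$;
\item the square $(a_h^{d.3},9,6,a_h^{d.4})$.
\end{itemize}
In this order, their unique red edges are $\{a_g^{d.1},u_1\}$, $\{u_i,u_{i+1}\}$, $\{u_d,4\}$, $\{0,1\}$, $\{0,2\}$, $\{2,4\}$, $\{1,6\}$ and $\{6,a_h^{d.4}\}$. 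Each of these faces contains at most one green edge.

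In $A_g$ for $k\geq 3$ (indices taken cyclically):
\begin{itemize}
\item the octagon between the $s_d$- and $s_{d+1}$-blocks has the single red edge $\{a_g^{d.4},a_g^{(d+1).1}\}$;
\item its two green edges $\{c_g^{d.1},a_g^{d.3}\}$ and $\{a_g^{(d+1).2},c_g^{(d+1).1}\}$ lie on base cycles of the different colours $d$ and $d+1$;
\item the center cycle contains no red or green edges.
\end{itemize}
The cases $k=1,2$ are checked the same way.

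A roof cycle consists only of red edges, and a base cycle only of green edges of a single colour. Therefore every inner face meets every roof or base cycle in at most one edge, however often the cycle revisits a blow-up graph. With this count added your argument is complete. It is then more explicit than the paper's proof, which only asserts this step (cf.\ \cref{lemmaCompleteness1}).
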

\begin{proof}
    We have to show three properties of the computed paths: The fact that they are cycles, as well as the completeness and correctness of $\mathcal{Z}$.

    Firstly, it is easy to see that the roof edge cycles and the base edge cycles are indeed cycles. This follows immediately from (1) in \cref{lemmaCyclesPowers}. 
    Seeing that we start in some $a_g^{1.1}$ in a roof edge cycle, or in some $a_g^{d.2}$ for $d \in [k]$ in a base edge cycle, we finish the computation of a cycle as soon as we revisit $a_g^{1.1}$, or $a_g^{d.2}$ respectively, having visited no other vertex twice before.

    We can infer the completeness and correctness of the cycles using basic group theoretical arguments. The powers of a single element $g \in G$ form a subgroup in $G$, that is $\langle g \rangle \leq G$. Now, we know that $G$ is the setwise disjoint union of all left cosets of $\langle g \rangle$ in $G$. 

    Again, we set $s := \prod_{d=1}^k s_d$.
    From \cref{lemmaCyclesPowers}, we know that the roof and base edge cycles trace precisely along the elements in the left cosets of $g\langle s \rangle$ and $g \langle s_d \rangle, d \in [k]$, respectively. 
    Thus, no two roof or base edge cycles intersect with one another, as they traverse the $d$-chains and blow-up graphs of different left cosets induced by $s$ in $G$. 
    Also, no roof edge cycle intersects with a base edge cycle, simply because they traverse different chain edges, as well as different blow-up graph vertices.
    Finally, since $G$ is the disjoint union of its left cosets, and since we end the computation of the roof and base edge cycles when $\tilde{G}$ is empty, we can infer the completeness: Every edge in a roof or a base edge cycle occurs in precisely one cycle along a left coset of the form $g \langle f \rangle$ for $f \in G$.

    Finally, it is easy to see that any edge $e \in E(D_{G,S})$ is either contained in two cycles of $\mathcal{Z}_i$, or in only one cycle of $\mathcal{Z}_i$ and in a second cycle of either $\mathcal{Z}_r$ or $\mathcal{Z}_b$. So, $\mathcal{Z}$ is a CDC of $D_{G,S}$ satisfying the properties for inducing a polyhedral map.
\end{proof}

In fact, \cref{lemmaCyclesPowers} provides a way of obtaining various polyhedral maps at once. The inner face cycles only depend on $G$ and the cardinality of $S$. The base edge cycles only depend on the generating set $S = \left\{ s_1, \ldots, s_k\right\}$ as a set, but the roof edge cycles depend very much on the ordering we chose for the generators. Thus, permuting the generators may, in general, yield a different element $s = \prod_{d=1}^k s_d$, and therefore a different set of roof edge cycles.
Now, we determine the cardinality of the constructed CDC.

\begin{lemma}\label{lemmaSizeZ}
    Let $G$ be a group of order $n$ with a generating set $S$ of cardinality $k$. Let $\mathcal{Z}$ denote the CDC of the graph $D_{G,S}$ from \cref{theoremZ1CDC}, and set $s := \prod_{d=1}^k s_d$. Then, $\mathcal{Z}$ contains
    \begin{enumerate}
        \item $n\left( \frac{k^2+15k}{2} + 1 + \frac{1}{|s|} + \sum_{d=1}^k \frac{1}{|s_d|}\right)$ cycles if $k \geq 3$,
        \item $n \left(17 + \frac{1}{|s|} + \frac{1}{|s_1|} + \frac{1}{|s_2|} \right)$ cycles if $k = 2$, and
        \item $8n+2$ cycles if $k=1$.
    \end{enumerate}
\end{lemma}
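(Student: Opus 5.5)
The plan is to count the three families separately, $|\mathcal{Z}| = |\mathcal{Z}_i| + |\mathcal{Z}_r| + |\mathcal{Z}_b|$. The sets $\mathcal{Z}_r$ and $\mathcal{Z}_b$ follow from \cref{lemmaCyclesPowers}. The set $\mathcal{Z}_i$ is counted gadget by gadget, as bounded faces of the planar pieces.

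\emph{Roof and base edge cycles.} By \cref{lemmaCyclesPowers}, each roof edge cycle corresponds to exactly one left coset $g\langle s\rangle$. The update procedure for $\tilde{G}$ produces one cycle per coset, and by \cref{theoremZ1CDC} distinct cosets give disjoint cycles. Hence $|\mathcal{Z}_r| = [G:\langle s\rangle] = n/|s|$. Likewise, for each fixed $d$ the base edge cycles built from $\tilde{G}_d$ are in bijection with the left cosets of $\langle s_d\rangle$. This gives $|\mathcal{Z}_b| = \sum_{d=1}^k n/|s_d|$.

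\emph{Inner face cycles.} I will attribute each inner face to a $d$-chain or to a blow-up graph.
\begin{itemize}
\item The green face between $A_g$ and the start of $\mathfrak{C}_{[d,g,h]}$, namely $(a_g^{d.1},u_1,\ell_1,a_g^{d.2})$, is charged to the chain. So is the green face $(a_h^{d.3},9,6,a_h^{d.4})$ at its end.
\item A ladder of length $d$ contributes $d-1$ quadrilaterals.
\item The face $(u_d,4,7,\ell_d)$ joins the ladder to the endgadget and contributes one face.
\item The endgadget is planar with $10$ vertices and $13$ edges. By Euler's formula it has $13-10+2-1=4$ bounded faces.
\end{itemize}
So each $d$-chain accounts for $d+6$ inner faces. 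For each $d$ there are exactly $n$ chains of colour $d$, so the chains give $n\sum_{d=1}^k (d+6) = n\frac{k^2+13k}{2}$ faces.

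For the blow-up graph with $k\ge 3$, I read off from \cref{fig:InteriorCyclesAg} that $A_g$ has $k$ faces, one between each pair of consecutive $s_d$-blocks. It also has the one central face bounded by the cycle of the $c_g^{d.2}$. This adds $n(k+1)$. Together with the roof and base counts, the total is $n\left(\frac{k^2+15k}{2}+1+\frac1{|s|}+\sum_d\frac1{|s_d|}\right)$.

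For $k=2$, the chains give $n(7+8)=15n$, and each $A_g$ contributes the $2$ faces of \cref{fig:agCycles2Gens_inner}. This yields $17n$ plus $n/|s|+n/|s_1|+n/|s_2|$.

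For $k=1$, the chains give $7n$ and each $A_g$ gives one face, so $|\mathcal{Z}_i|=8n$. Here $G=\langle s_1\rangle$ is cyclic and $s=s_1$, so $|s|=|s_1|=n$. Then $|\mathcal{Z}_r|=|\mathcal{Z}_b|=1$, giving $8n+2$.

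The main obstacle is the bookkeeping in $\mathcal{Z}_i$: every inner face must be counted exactly once. This matters especially for the green faces that straddle a chain and a blow-up graph. I would cross-check the total with the handshake identity $\sum_{z\in\mathcal{Z}}|z| = 2|E(D_{G,S})|$, using the edge counts stated after \cref{defGraphD}. For the $k=1$ and $k=2$ conventions, I would verify the face counts directly against the figures.
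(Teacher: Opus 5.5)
Your proposal is correct and follows the same argument as the paper. It counts $k+1$ inner faces per blow-up graph and $d+6$ per $d$-chain, including the two faces shared with blow-up graphs. It then counts $[G:\langle s\rangle]$ roof cycles and $\sum_d[G:\langle s_d\rangle]$ base cycles via \cref{lemmaCyclesPowers}, and adjusts for one fewer blow-up face when $k=2$ and for $|s|=|s_1|=n$ when $k=1$. Your explicit breakdown of the $d+6$ faces and the handshake cross-check (which indeed balances to $3n(k^2+17k)$) add detail the paper leaves implicit, but the route is the same.
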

\begin{proof}
    We first consider the case $k \geq 3$.
    Every blow-up graph $A_g$ accounts for $k+1$ many face cycles, given by the faces delimited by two adjacent $s_d$-blocks, and the face cycle induced by the cycle over all center vertices of superscript two. 
    Every $d$-chain $\mathfrak{C}_{[d,g,h]}$ contains precisely $d+6$ cycles when counting the face cycles as well as the face cycles delimited by a $d$-chain and a blow-up graph.
    Summing over all elements $g \in G$, this totals
    \[  n \left( (k+1) + \sum_{d=1}^k (d+6)\right) = n\left( \frac{k^2+15k}{2} + 1 \right) \]
    face cycles in $D_{G,S}$.
    Moreover, we have the roof and base edge cycles. Using \cref{lemmaCyclesPowers}, the element $s = \prod_{d=1}^k s_d$ accounts for $[G : \langle s \rangle ]$ many cycles, and every generator $s_d \in S$ for $[ G : \langle s_d \rangle ]$ many cycles. Adding these sizes to the above formula yields the identity for the case $k \geq 3$.

    For $k=2$, we have one less face cycle within every blow-up graph, as the interior center vertex cycles vanish in this case. Other than that, all cycles from the case $k \geq 3$ also exist in $D_{G,S}$ when $k = 2$. Therefore, subtracting $n$ from the above identity yields the number of cycles for this case. 

    Finally, for the case $k=1$, we only have one face cycle per group element $g \in G$. Moreover, we only have one roof and base edge cycle, since the group is cyclic. Finally, since we only have one generator, we have seven face cycles in total on each $d$-chain. The sum yields the above formula.
\end{proof}

The automorphisms of $D_{G,S}$ are induced by a left-regular action of $G$ on itself. 
Moreover, in \cref{lemmaDChainsPermus,lemmaDNGPermus}, we have seen that both the $d$-chains and the blow-up graphs can only be mapped as a whole to other $d$-chains and blow-up graphs in accordance with the underlying left-regular action. In other words, any automorphism of the graph $D_{G,S}$ does not change the local structure of the graph, only the broader one which determines which blow-up graph corresponds to which element. Formally, we can prove the following result on the invariance of the cycles of the constructed CDC under the action of the group of the graph $D_{G,S}$.
\begin{theorem}\label{lemmaInvarianceCDC}
    For a group $G=\langle S\rangle = \langle s_1, \ldots, s_k \rangle$, let $\mathcal{Z}$ be the constructed CDC inducing a polyhedral map of the graph $D_{G,S}$ from \cref{theoremZ1CDC}. Then, $\prescript{G}{}{\mathcal{Z}} = \prescript{\Aut(D_{G,S})}{}{\mathcal{Z}} = \mathcal{Z}$, i.e.\ $\mathcal{Z}$ is invariant under the action of the group of the graph $D_{G,S}$.
\end{theorem}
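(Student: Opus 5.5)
By \cref{theoremAutGroupD} and \cref{lemmaAutDAutCayley}, every automorphism of $D_{G,S}$ is of the form $\tilde{\pi}_f$ for a left multiplication $\pi_f\in\mathcal{L}(G)$, $f\in G$, as constructed in the proof of \cref{lemmaAutCayleyAutD}. This identifies ${}^{G}\mathcal{Z}$ with ${}^{\Aut(D_{G,S})}\mathcal{Z}$. It therefore suffices to fix $f\in G$ and show $\tilde{\pi}_f(z)\in\mathcal{Z}$ for every $z\in\mathcal{Z}$. Since $\tilde{\pi}_f$ is a bijection on cycles and $\mathcal{Z}$ is finite, this inclusion gives $\tilde{\pi}_f(\mathcal{Z})=\mathcal{Z}$, and the union over $f$ then equals $\mathcal{Z}$ as well (taking $f=1_G$ shows ${}^{G}\mathcal{Z}\supseteq\mathcal{Z}$). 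I would treat the three families $\mathcal{Z}_i$, $\mathcal{Z}_r$, $\mathcal{Z}_b$ separately.

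For $\mathcal{Z}_i$: by definition, $\tilde{\pi}_f$ maps $A_g$ to $A_{fg}$ and $\mathfrak{C}_{[d,g,h]}$ to $\mathfrak{C}_{[d,fg,fh]}$, and it preserves every label $a^{d.i}$, $c^{d.j}$, $u_i$, $\ell_i$, $j$. The inner face cycles are described purely in terms of these labels inside one blow-up graph or one $d$-chain, together with the connecting edges in $E_3\cup E_4$ for the faces lying between a chain and a blow-up graph. Hence each inner face cycle of $A_g$ or $\mathfrak{C}_{[d,g,h]}$ is mapped to the correspondingly labelled face cycle of $A_{fg}$ or $\mathfrak{C}_{[d,fg,fh]}$, which again lies in $\mathcal{Z}_i$.

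For $\mathcal{Z}_r$: a roof edge cycle is determined by a starting element $g$. Viewed in $\Cay_{G,S}$, it is the closed walk $g, gs_1, gs_1s_2,\ldots$, lifted through upper paths and the edges $\{a_h^{d.4},a_h^{(d+1\modd k).1}\}$. Applying $\tilde{\pi}_f$ replaces every group element $h$ on the walk by $fh$. Since right multiplication by the generators commutes with left multiplication by $f$, the image is exactly the roof edge cycle with starting element $fg$. By \cref{lemmaCyclesPowers} this is the cycle associated with the coset $fg\langle s\rangle$. The step I expect to require the most care is showing that this cycle really belongs to $\mathcal{Z}_r$, even though the algorithm picks only one representative per coset. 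To handle it I would argue, using (1) of \cref{lemmaCyclesPowers} and the fact that the algorithm's cycles exhaust all cosets (the completeness part of \cref{theoremZ1CDC}), that the roof edge cycle through $a_{g'}^{1.1}$ depends only on the coset $g'\langle s\rangle$. Starting at any element of that coset traverses the same closed walk with a shifted starting point, so it gives the same cycle as a subgraph, and this cycle lies in $\mathcal{Z}_r$.

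For $\mathcal{Z}_b$ the argument is identical, with $s$ replaced by a fixed generator $s_d$. The image of the base edge cycle starting at $g$ with colour $d$ is the base edge cycle starting at $fg$ with colour $d$, associated with the coset $fg\langle s_d\rangle$ by (2) of \cref{lemmaCyclesPowers}. By the same coset-independence argument, this cycle is in $\mathcal{Z}_b$. Combining the three families gives $\tilde{\pi}_f(\mathcal{Z})\subseteq\mathcal{Z}$ for all $f\in G$, and hence the claimed equalities.
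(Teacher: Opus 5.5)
Your proposal is correct and follows the paper's approach. The paper argues that every automorphism is some $\tilde{\pi}_f$ induced by left multiplication, which maps $d$-chains and blow-up graphs as a whole with all labels preserved, and hence sends inner face, roof edge and base edge cycles to cycles of the same type. The paper only sketches this informally before stating the theorem; your explicit argument that a roof (resp.\ base) edge cycle depends only on the coset $g\langle s\rangle$ (resp.\ $g\langle s_d\rangle$), and so lies in $\mathcal{Z}$ whatever representative the algorithm chose, fills in the one point the paper leaves implicit.
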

In fact, we can even go one step further and prove that $\prescript{G}{}{\mathcal{Z}_i} = \mathcal{Z}_i$ for $i \in [3]$, i.e.\ face cycles are mapped to face cycles, roof edge cycles to roof edge cycles and base edge cycles to base edge cycles.

Given the size of the graph $D_{G,S}$ and \cref{lemmaSizeZ}, we can compute the Euler characteristic of the polyhedral map induced by the constructed CDC.

\begin{theorem}\label{theoremEulerChar}
    Let $G$ be a group of order $n$ with a generating set $S$ of cardinality $k$. Let $\mathcal{Z}$ be the CDC of $D_{G,S}$ from \cref{theoremZ1CDC}, and define $s := \prod_{d=1}^ks_d$. Then, the Euler characteristic of the polyhedral map of $D_{G,S}$ induced by $\mathcal{Z}$ is given by
    \begin{enumerate}
        \item $n \left(1 - k + \frac{1}{|s|} + \sum_{d=1}^k \frac{1}{|s_d|} \right)$ if $k \geq 3$,
        \item $n \left(- 1 + \frac{1}{|s|} + \frac{1}{|s_1|} + \frac{1}{|s_2|} \right) $ if $k=2$, and
        \item $2$ if $k=1$.
    \end{enumerate}
\end{theorem}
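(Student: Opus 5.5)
The plan is to compute the Euler characteristic directly as $\chi = |V| - |E| + |F|$ of the polyhedral map induced by $\mathcal{Z}$. By \cref{theoremZ1CDC}, $\mathcal{Z}$ is a CDC inducing a polyhedral map of $D_{G,S}$. The vertices and edges of this map are those of $D_{G,S}$, and its faces are in bijection with the cycles of $\mathcal{Z}$. So $|F| = |\mathcal{Z}|$, and this is given by \cref{lemmaSizeZ}. The vertex and edge counts of $D_{G,S}$ were stated after \cref{defGraphD}. The proof therefore reduces to combining these two counts, case by case in $k$.

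First, I would briefly re-verify the vertex count for $k \geq 3$, since everything rests on it. Each blow-up graph $A_g$ has $6k$ vertices: four connector vertices and two center vertices per $s_d$-block. Each $d$-chain has $2d + 10$ vertices, and there are $n$ chains of each colour $d$. This gives
\[ |V| = n\Bigl(6k + \sum_{d=1}^k (2d+10)\Bigr) = n(k^2 + 17k). \]
Since the graph is cubic, $|E| = \tfrac{3}{2}|V| = \tfrac{3k^2n + 51kn}{2}$, and hence $|V| - |E| = -\tfrac{n(k^2+17k)}{2}$. Adding $|\mathcal{Z}| = n\bigl(\tfrac{k^2+15k}{2} + 1 + \tfrac{1}{|s|} + \sum_{d} \tfrac{1}{|s_d|}\bigr)$, the $k^2$ terms cancel and $\tfrac{15k - 17k}{2} = -k$. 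This yields $n\bigl(1 - k + \tfrac{1}{|s|} + \sum_{d=1}^k \tfrac{1}{|s_d|}\bigr)$.

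For $k = 2$, I would use $|V| = 36n$ and $|E| = 54n$, so $|V| - |E| = -18n$. Together with $|\mathcal{Z}| = n\bigl(17 + \tfrac{1}{|s|} + \tfrac{1}{|s_1|} + \tfrac{1}{|s_2|}\bigr)$ this gives the claimed $n\bigl(-1 + \tfrac{1}{|s|} + \tfrac{1}{|s_1|} + \tfrac{1}{|s_2|}\bigr)$. As a sanity check, the vertex count is $2\cdot 5$ block vertices per $A_g$ plus chains of sizes $12$ and $14$, i.e.\ $10 + 26 = 36$ per element.

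For $k = 1$, I would use $|V| = 16n$, $|E| = 24n$ and $|\mathcal{Z}| = 8n + 2$, giving $\chi = -8n + 8n + 2 = 2$.

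The only real obstacle is ensuring that the counts from \cref{lemmaSizeZ} and the graph size are consistent for the small cases. In particular, for $k = 1$ the roof and base edge cycles each form a single cycle, because $G = \langle s_1 \rangle$ is cyclic and $s = s_1$. This is what produces the additive constant $2$ rather than a multiple of $n$, and it is consistent with the general formula specialised to $k = 1$: $n\bigl(1 - 1 + \tfrac{2}{|s_1|}\bigr) = 2$. I would note this agreement as a check.
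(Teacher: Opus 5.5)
Your proposal is correct and follows the same approach as the paper. The paper gives no separate proof: it simply notes that the result follows from the vertex and edge counts of $D_{G,S}$ stated after \cref{defGraphD} together with the cycle count $|\mathcal{Z}|$ from \cref{lemmaSizeZ}, i.e.\ $\chi = |V| - |E| + |\mathcal{Z}|$ evaluated case by case in $k$, and your independent re-derivation of $|V| = n(k^2+17k)$ and your $k=1$ consistency check are both accurate.
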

Hence, for $k=1$ the CDC of the cubic graph $D_{G,S}$ gives rise to a spherical map.

\section*{Acknowledgements}
R.\ Akpanya, M.\ Weiß and A.\ C.\ Niemeyer gratefully acknowledge the funding by the Deutsche Forschungsgemeinschaft (DFG, German Research Foundation) in the framework of the Collaborative Research Centre CRC/TRR 280 “Design Strategies for Material-Minimized Carbon Reinforced Concrete Structures – Principles of a New Approach to Construction” (project ID 417002380).
Furthermore, R.\ Akpanya was supported by a grant from the Simons Foundation (SFI-MPS-Infrastructure-00008650, JV).

\bibliographystyle{plain}
\bibliography{main}

\end{document}